\theoremstyle{plain}
\numberwithin{equation}{section}
\numberwithin{figure}{section}
\numberwithin{table}{section}
\newtheorem{theorem}{Theorem}[section]
\newtheorem{lemma}[theorem]{Lemma}
\newtheorem{corollary}[theorem]{Corollary}
\newtheorem{proposition}[theorem]{Proposition}
\newtheorem{assumption}[theorem]{Assumption}
\theoremstyle{definition}
\newtheorem{definition}[theorem]{Definition}
\newtheorem{example}[theorem]{Example}
\theoremstyle{remark}
\newtheorem{remark}[theorem]{Remark}
\newcommand{\R}{\mathbb{R}}
\newcommand{\N}{\mathbb{N}}
\newcommand{\TB}{\mathfrak T}
\newcommand{\fcc}{{\tt fcc}}
\newcommand{\abs}[1]{\left|#1\right|}
\newcommand{\norm}[1]{\left\|#1\right\|}
\newcommand{\average}[2]{{\mathcal A}_{#1} #2 }
\newcommand{\dist}[2]{\operatorname{dist}\left(#1,#2\right)}
\def\R{{\mathbb R}}
\def\BbbR{{\mathbb R}}
\def\N{{\mathbb N}}
\def\BbbN{{\mathbb N}}
\newcommand{\TT}{{\mathcal T}}
\newcommand{\Tt}{\widetilde {\mathcal T}}
\newcommand{\Th}{\widehat {\mathcal T}}
\newcommand{\II}{I_h^m}
\newcommand{\hh}{\widehat h}
\def\support#1{\operatorname*{supp}#1}
\def\diam{\operatorname*{diam}}
\newcommand{\eremk}{\hbox{}\hfill\rule{0.8ex}{0.8ex}}
\newcommand{\skp}[1]{\left< #1 \right>}
\begin{document}
\title{On the stability of Scott-Zhang type operators and application to multilevel preconditioning in fractional diffusion}
\author{
\and {Markus Faustmann\footnotemark[1]}
\and {Jens Markus Melenk\footnotemark[1]}
\and {Maryam Parvizi\footnotemark[1]}
}
	\footnotetext[1]{Institute of Analysis and Scientific Computing, 
	Technische Universit\"at Wien, Wiedner Hauptstr. 8-10, A--1040 Wien, 
        Austria, 
\texttt{$\{$markus.faustmann, melenk, maryam.parvizi$\}$@tuwien.ac.at} \\
MP was funded by the Austrian Science Fund (FWF) project P 28367 and JMM was supported 
by the Austrian Science Fund (FWF) by the special
research program Taming complexity in PDE systems (grant SFB F65)}
\maketitle
\begin{abstract}  
We provide an endpoint stability result for Scott-Zhang type operators 
in Besov spaces. For globally continuous piecewise polynomials
these are bounded from $H^{3/2}$ into $B^{3/2}_{2,\infty}$; 
for elementwise polynomials these are bounded from $H^{1/2}$ into 
$B^{1/2}_{2,\infty}$. 
As an application, we obtain a multilevel decomposition based on Scott-Zhang 
operators on a hierarchy of meshes generated by newest vertex bisection with 
equivalent norms up to (but excluding) the endpoint case. 
A local multilevel diagonal preconditioner for the fractional Laplacian on
locally refined meshes with optimal eigenvalue bounds is presented. 
\end{abstract}

\section{Introduction}

The Scott-Zhang projection, originally introduced in \cite{scott-zhang90},
is a very important tool in numerical analysis and has been generalized 
in various ways, 
\cite{bernardi,girault1,cc1,cchu,apel2,a01,rand12,ciarlet13, falk-winther15,AFFKP15, karkulik-melenk15, ern-guermond17}. 
In its classical form, it is quasi-local, it is a projection onto the space of globally continuous, piecewise polynomials, 
it is stable in both $L^2$ and $H^1$ (and thus, by interpolation also in $H^s$, $s \in (0,1)$), 
and has optimal approximation properties.  
Therefore, it is well-suited for the analysis of classical finite element methods (FEMs), \cite{brenner-scott02},
and plays a key role in the analyses of, e.g., anisotropic finite elements, \cite{Apel99}, adaptive 
finite element methods, \cite{AFKPP13}, or mixed methods, \cite{Badia12}. 


As globally continuous piecewise linear functions are not only in the Sobolev space $H^1(\Omega)$, but 
also in (fractional) Sobolev spaces $H^{3/2-\varepsilon}(\Omega)$ for any $\varepsilon >0$ --- in fact, they are
in the Besov space $B^{3/2}_{2,\infty}(\Omega)$ --- a natural question is whether 
the operator is also stable in the stronger norms imposed on these spaces.
In this article, we provide an endpoint stability result, 
i.e., study the stability in the norm $\|\cdot\|_{B^{3/2}_{2,\infty}}$,  
not only for the Scott-Zhang operator but more generally for local, $L^2(\Omega)$-stable operators 
with certain approximation properties in $L^2(\Omega)$ on shape-regular meshes. Additionally, we cover the case of 
operators such as the elementwise $L^2$-projection that map into spaces of discontinuous piecewise polynomials, 
where the corresponding endpoint space is $B^{1/2}_{2,\infty}$. By interpolation, these endpoint results
imply stability results in the full range between $L^2$ and the Besov space. 

Multilevel representations of Sobolev spaces (and Besov spaces) based on sequences of uniformly refined 
meshes are available in the literature; see, e.g.,  
\cite{oswald94,schneider98,bramble-pasciak-vassilevski99}, and the references there.  
Our stability for Scott-Zhang type operators allows us develop multilevel norm equivalences for 
spaces of globally continuous piecewise polynomials on 
adaptively refined meshes $\TT$. These are assumed to be shape-regular and obtained by 
\emph{newest vertex bisection} (NVB). The mesh hierarchy $\widetilde{\TT}_\ell = \fcc(\TT,\widehat{\TT}_\ell)$, 
$\ell=0,\ldots,L$, is given by the finest common coarsening 
of $\TT$ and the meshes $\widehat{\TT}_\ell$ 
of a sequence $(\widehat{\TT}_\ell)_{\ell}$ of uniformly refined NVB-generated meshes. 
Our actual multilevel decomposition is then obtained with an adapted Scott-Zhang operator that is of 
independent interest (Lemma~\ref{lemma:scott-zhang-tilde-hat}).

In numerics, an important application of multilevel decompositions is the design of 
multilevel additive Schwarz preconditioners, in particular multilevel diagonal scaling 
\cite{dryja-widlund91,zhang92} and BPX, \cite{bramble-pasciak-xu91}. 
In this article, we propose a local multilevel diagonal preconditioner for 
the integral fractional Laplacian $(-\Delta)^s$ for $s\in(0,1)$ on adaptively refined meshes $\TT_\ell$. 
The need for a preconditioner arises from the observation that the condition number 
 of the stiffness matrix $\mathbf{A}^\ell \in \R^{N_\ell \times N_\ell}$ corresponding to a FEM discretization
 by piecewise linears of the integral fractional Laplacian grows like
$
  \kappa(\mathbf{A}^\ell) \sim N_\ell^{2s/d} \left(\frac{h^\ell_{\rm max}}{h^\ell_{\rm min}}\right)^{d-2s},
$
 where $h^\ell_{\rm max},h^\ell_{\rm min}$ denote the maximal and minimal mesh width of $\mathcal{T}_\ell$, 
see, e.g., \cite{AMcLT99,AinGlu17}. 
Since the fractional Laplacian on bounded domains features singularities 
at the boundary, typical meshes are strongly refined towards the boundary so that 
the quotient $h^\ell_{\rm max}/h^\ell_{\rm min}$ is large 
(see, e.g., \cite{AinGlu17,BBNOS18,faustmann2019quasi} for adaptively generated meshes). While the impact 
of the variation of the element size can be controlled by diagonal scaling (see, e.g., \cite{AMcLT99}) 
the factor $N_\ell^{2s/d}$ persists. A good preconditioner is therefore required for an efficient iterative solution 
for large problem sizes $N_\ell$. 
Indeed, preconditioning for fractional differential operators has attracted attention recently. 
We mention multigrid preconditioners, \cite{AinGlu17} based on uniformly refined mesh hierarchies and 
operator preconditioning, \cite{hiptmair06,gimperlein-stocek-urzua19,stevenson-vanvenetie19}, which requires one
to realize an operator of the opposite order. Another, classical technique is the framework of 
additive Schwarz preconditioners, analyzed in a BPX-setting with Fourier techniques in \cite{BorNoc19}. 
In the present work, we also adopt the additive Schwarz framework and show that, also in the presence of 
adaptively refined meshes, multilevel diagonal scaling leads to uniformly bounded condition numbers
for the integral fractional Laplacian. The above mentioned norm equivalence of the multilevel decomposition
provides the lower bound for the eigenvalues; an inverse estimate in fractional Sobolev norms, 
similarly to \cite{faustmann2019quasi}, gives the upper bound for the eigenvalues. 
We mention that very closely related to preconditioning of discretizations of the fractional differential operators 
is earlier work on preconditioning for the hypersingular integral equation in boundary element methods (BEM), 
\cite{tran_stephan_asm_h_96,tran_stephan_mund_hierarchical_prec,
tran-stephan-zaprianov98,ainsworth2003multilevel,
maischak_multilevel_asm,feischl2017optimal}. 

The present paper is structured as follows: Section~\ref{sec:mainresults} provides the necessary 
notation and states the three main results of the paper. The first result is 
the stability of quasi-interpolation operators in the endpoint Besov space (Theorem~\ref{thm:stability-discontinuous}) 
both for globally continuous piecewise polynomials and discontinuous piecewise polynomials. 
The second result is a multilevel decomposition based on a 
modified Scott-Zhang operator on a mesh hierarchy of NVB meshes (Theorem~\ref{thm:multilevel-fcc}).
The third result is an optimal local multilevel 
diagonal preconditioner for the fractional Laplacian on adaptively generated meshes and 
the discretization 
by piecewise linears $S^{1,1}_0(\TT)$ (for $s \in (0,1)$) and by piecewise constants 
$S^{0,0}(\TT)$ (for $s \in (0,1/2)$). 
Two types 
of mesh hierarchies are considered: The first one is assumed to be generated by an adaptive algorithm 
and discussed in Theorem~\ref{thm:ASfractional}. The second one, 
$\widetilde{\TT}_\ell = \fcc(\TT,\widehat{\TT}_\ell)$, is generated 
by taking the finest common coarsening of a fixed mesh $\TT$ and a sequence of uniformly 
refined meshes $\widehat{\TT}_\ell$; this is analyzed in Theorem~\ref{thm:ASfractional2}. 

Section~\ref{sec:SZproof} is concerned with the proof of the stability result 
of the quasi-interpolation operators in Besov spaces. Moreover, we present some extensions  
such as inverse estimates in Besov-norms (Lemma~\ref{lemma:inverse-estimate}) or an interpolation result 
for discrete spaces in Besov-norms (Corollary~\ref{cor:interpolation}).

In Section~\ref{sec:NVBproof}, we develop properties of the finest common coarsening of two 
meshes. We prove the norm equivalence for the multilevel decomposition. 
Furthermore, we develop, for given meshes $\TT$, $\widehat \TT$, two Scott-Zhang type operator 
$\widehat{I}^{SZ}$ and 
$\widetilde{I}^{SZ}$ on the meshes $\widehat \TT$ and $\widetilde{\TT}:= \fcc(\TT,\widehat\TT)$ 
with the property $\widehat{I}^{SZ} u = \widetilde{I}^{SZ} u$ for $u \in S^{p,1}(\TT)$. Such 
operators are useful in various context and similar operators have been constructed, for example, 
in \cite[Lemma~{3.5}]{diening-kreuzer-stevenson16}. 

Finally, Section~\ref{sec:ASproof} provides the abstract analysis for the additive Schwarz method 
to prove the optimal bounds on the extremal eigenvalues of the preconditioned stiffness matrix 
for the fractional Laplacian on adaptively generated NVB meshes. Numerical experiments underline 
the optimality of the preconditioner.

Throughout the paper, we use the notation $\lesssim$ to abbreviate $\leq$ up to 
a generic constant $C>0$ that does not depend on critical parameters in our analysis. Moreover, 
we use $\simeq$ to indicate that both estimates $\lesssim$ and $\gtrsim$ hold.

\section{Main results}
\label{sec:mainresults}

\subsection{Stability of (quasi-)interpolation operators in Besov spaces}
Let $\Omega \subset \R^d$ be a bounded Lipschitz domain.
For $s \geq 0$, we use the Sobolev spaces  $H^s(\Omega)$, 
in the integer case $s \in \N_0$ defined in the standard way, see, e.g., \cite{adams-fournier03},
and for the fractional case $s \notin \N_0$ defined by interpolation, \cite{tartar07}.
We note that, equipped with the Aronstein-Slobodeckij (semi-)norm
\begin{align*}
\norm{u}_ {H^s (\Omega)}^2:= \norm{u}_{L^2 (\Omega)}^2+\abs{u}_{H^s (\Omega)}^2 \qquad \text{with} \quad 
\abs{u}_ {H^s (\Omega)}^2 
:=\int_\Omega \int_{\Omega} \frac{(u(x)-u (y))^2}{|x-y|^{d+2s}},
\end{align*}
the space $H^s (\Omega)$ is a Hilbert space.

Moreover, for $s > 0$, $s \notin \N_0$, $q \in [1,\infty]$, we employ the Besov spaces $B^{s}_{2,q}(\Omega)$ 
defined as the interpolation spaces 
$B^{s}_{2,q}(\Omega) 
:= (H^\sigma(\Omega),H^{\sigma+1}(\Omega))_{\theta,q}$, where
$\sigma = \lfloor s\rfloor$ and $\theta = s - \sigma \in (0,1)$. The 
norm is given by 
\begin{align*}
\|u\| _{B_{2,q} ^s(\Omega)}:= 
\begin{cases}
\left(\int_{t=0}^\infty \left(t^{-\theta} K(t,u)\right)^q\frac{dt}{t}\right)^{1/q} & 
q \in [1,\infty), \\
\sup _ {t>0} t ^{-\theta} K(t,u) & q = \infty. 
\end{cases}
\end{align*}
Here, for $u \in H^{\sigma}(\Omega)$ and  $t>0$, the $K$-functional is defined by
	\begin{align*}
 K(t,u):=	\inf _{u_t \in H^  {\sigma+1} (\Omega)} \|u-u_t\| _{H^ \sigma (\Omega)}+t \|u_t\| _{H^ {\sigma+1}(\Omega)}.
	\end{align*}
	
For discretization, we assume that a regular (in the sense of Ciarlet) triangulation $\TT$ of $\Omega$ consisting 
of open simplices is given. Additionally, $\TT$ is assumed to be $\gamma$-shape regular in the sense
\begin{align*}
 \max_{T \in \TT} (\operatorname*{diam}(T)/\abs{T}^{1/d}) \leq \gamma < \infty,
\end{align*}
where $\operatorname*{diam}(T):= \sup_{x,y \in T} \abs{x-y}$ and $\abs{T}$ is the volume of $T$.
By $h \in L^\infty(\Omega)$, we denote the piecewise constant mesh size function satisfying
$h|_T:=h_T:= \abs{T}^{1/d}$ for $T \in \TT$. 

Let $P_p (T )$ be the space of polynomials of (maximal) degree $p$ on the element $T\in\TT$.
Then, the spaces of $\TT$-piecewise polynomials of degree $p \in \N_0$ and regularity $m \in \N_0$
are defined by 
\begin{align*}
S ^{p,m} (\mathcal{T})&:= \left\lbrace u \in H^m (\Omega) 
\colon u|_T \in P_p (T)\quad \forall T \in \mathcal{ T} \right\rbrace, \\  
S ^{p,m}_0 (\mathcal{T})&:= S ^{p,m} (\mathcal{T})\cap H^1_0(\Omega) \qquad m\geq 0.  
\end{align*}
For $T  \in \mathcal{T} $ the element patch 
\begin{equation*}
\omega (T):=\omega^1 (T):= \operatorname*{interior}  \left(\bigcup \{\overline{T^\prime}\colon T' \in\TT, \overline{T^\prime} \cap \overline{T} \ne \emptyset\}\right),
\end{equation*}
consists of the element $T$ and all its neighbors. Inductively, the $k$-th order patch is defined by 
\begin{align*}
 \omega^k (T) := \operatorname*{interior}  \left(\bigcup \{\overline{T^\prime}\colon T' \in\TT,
 \overline{T^\prime} \cap \overline{\omega^{k-1}(T)} \ne \emptyset\}\right).
\end{align*}

In the following, we study (quasi-)interpolation operators 
$\II$ satisfying the following locality, stability and approximation properties.

\begin{assumption}
\label{assumption:I-discontinuous}
Let $m\geq 1$ and $\II$ be an operator $\II:L^2(\Omega) \rightarrow S^{p,m-1}(\TT) $ that satisfies:
\begin{enumerate}[(i)]
\item 
\label{item:assumption:I-discontinuous-i}
Quasi-locality: For every $T \in \TT$ the restriction
$(\II u)|_T$ depends solely on $u|_{\omega (T)}$. 
\item 
\label{item:assumption:I-discontinuous-ii}
Stability in $L^2$: For $u \in L^2(\Omega)$, there holds
$$
\|\II u \|_{L^2(T)} \leq C \|u\|_{L^2(\omega (T))}. 
$$
\item 
\label{item:assumption:I-discontinuous-iii}
Approximation properties of $m$-th order: For $u \in H^m(\Omega)$, there holds
$$
\|u - \II u\|_{L^2(T)} \leq C h_T^m \|u\|_{H^m(\omega (T))}. 
$$
\end{enumerate}
The constants in \eqref{item:assumption:I-discontinuous-ii} and \eqref{item:assumption:I-discontinuous-iii}
depend only on $\Omega$, $d$, $m$, $p$, and the $\gamma$-shape regularity of $\TT$. 
\end{assumption}

The following theorem is the main result of this subsection and states a stability result in the 
Besov space $B^{m-1/2}_{2,\infty}(\Omega)$
for operators satisfying Assumption~\ref{assumption:I-discontinuous}. Its proof will be given in 
Section~\ref{sec:proofSZ} below.

\begin{theorem}
\label{thm:stability-discontinuous}
Fix $m \in \{1,2\}$ and $p \in \BbbN_0$ with $p \ge m-1$. 
Let an operator $\II$ satisfying Assumption~\ref{assumption:I-discontinuous} be given. 
Then, for all $u \in H^{m-1/2}(\Omega)$, we have
\begin{equation}
\label{eq:thm:stability-discontinuous-i}
\|\II u\|_{B^{m-1/2}_{2,\infty}(\Omega)} \leq C \|u\|_{H^{m-1/2}(\Omega)}, 
\end{equation}
where the constant $C > 0$ depends solely on $\Omega$, $d$, $m$ ,$p$, and the $\gamma$-shape regularity of ${\mathcal T}$. 

If the mesh ${\mathcal T}$ is additionally {\em quasi-uniform}, then, for all $u \in B^{m-1/2}_{2,\infty}(\Omega)$, 
the sharper estimate 
\begin{equation}
\label{eq:thm:stability-discontinuous-ii}
\|\II   u\|_{B^{m-1/2}_{2,\infty}(\Omega)} \leq C \|u\|_{B^{m-1/2}_{2,\infty}(\Omega)}  
\end{equation}
holds.
\end{theorem}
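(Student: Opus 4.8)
The plan is to characterise the endpoint space by first order differences and to reduce the claim to a \emph{defect from $H^m$-conformity} estimate for $\II u$. Since $m-\tfrac12\in(m-1,m)$, a Stein/Calder\'on extension together with the standard difference characterisation of $B^{1/2}_{2,\infty}$ applied to the collection $D^{m-1}v$ of derivatives of order $m-1$ identifies $B^{m-1/2}_{2,\infty}(\Omega)$ with $\{v\in H^{m-1}(\Omega):D^{m-1}v\in B^{1/2}_{2,\infty}(\Omega)\}$ and yields, with $\Omega_\eta:=\{x\in\Omega:x+\eta\in\Omega\}$,
\begin{equation*}
\norm{v}_{B^{m-1/2}_{2,\infty}(\Omega)}\simeq \norm{v}_{H^{m-1}(\Omega)}+\sup_{\eta\neq 0}\abs{\eta}^{-1/2}\norm{D^{m-1}v(\cdot+\eta)-D^{m-1}v(\cdot)}_{L^2(\Omega_\eta)}
\end{equation*}
(for $m=1$ the second term is the $L^2$-modulus of $v$ itself, for $m=2$ it involves $\nabla v$). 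It therefore suffices to bound the two terms on the right for $v=\II u$.

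The first term is handled by showing that, under Assumption~\ref{assumption:I-discontinuous}, $\II$ is bounded $H^{m-1}(\Omega)\to H^{m-1}(\Omega)$ with admissible constant: for $m=1$ this is \eqref{item:assumption:I-discontinuous-ii} summed over the finitely overlapping patches, for $m=2$ one combines \eqref{item:assumption:I-discontinuous-ii}, \eqref{item:assumption:I-discontinuous-iii}, elementwise inverse estimates and Bramble--Hilbert on patches; since $B^{m-1/2}_{2,\infty}(\Omega)\hookrightarrow H^{m-1}(\Omega)$ this already gives $\norm{\II u}_{H^{m-1}}\lesssim\norm{u}_{H^{m-1/2}}$. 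For the second term one prepares three local ingredients, all obtained from Assumption~\ref{assumption:I-discontinuous} by localised real interpolation of $u\mapsto(u-\II u)|_T$ combined with inverse, trace and polynomial approximation estimates on the patches $\omega(T)$ (and with the inverse-type bound $\abs{q}_{H^m(\omega(T))}\lesssim h_T^{-1/2}\abs{u}_{H^{m-1/2}(\omega(T))}$ for the $L^2(\omega(T))$-best polynomial approximation $q\in P_p(\omega(T))$ of $u$, which relies on $p\ge m-1$): for every $T\in\TT$ and $s\in[0,m]$,
\begin{equation*}
\norm{u-\II u}_{L^2(T)}\lesssim h_T^{s}\norm{u}_{H^{s}(\omega(T))},\qquad
\norm{D^{m}\II u}_{L^2(T)}\lesssim h_T^{-1/2}\norm{u}_{H^{m-1/2}(\omega(T))},\qquad
\norm{D^{m-1}(\II u-u)}_{L^2(T)}\lesssim h_T^{1/2}\norm{u}_{H^{m-1/2}(\omega(T))},
\end{equation*}
together with the crux of the argument, the jump (nonconformity) estimate: summing over interior faces $F=\overline{T_1}\cap\overline{T_2}$,
\begin{equation*}
\sum_{F}\norm{[\![\,D^{m-1}\II u\,]\!]_F}_{L^2(F)}^2\lesssim\norm{u}_{H^{m-1/2}(\Omega)}^2 .
\end{equation*}
This last estimate is proved by subtracting on the face-patch $\omega_F:=\omega(T_1)\cup\omega(T_2)$ a single polynomial $q\in P_p(\omega_F)$ (so that $[\![D^{m-1}q]\!]_F=0$); quasi-locality \eqref{item:assumption:I-discontinuous-i} makes $(\II u)|_{T_i}-q$ a polynomial on $T_i$ with $\norm{(\II u)|_{T_i}-q}_{L^2(T_i)}\lesssim h_F^{m-1/2}\norm{u}_{H^{m-1/2}(\omega_F)}$, after which an inverse estimate and a scaled trace inequality give the local bound, and summation uses the bounded overlap of the $\omega_F$.

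With these ingredients the difference term is estimated for arbitrary $\eta\neq0$ by splitting $\Omega$ into the fine part $\Omega^{\rm f}_\eta:=\bigcup\{T\in\TT:h_T\le c\abs{\eta}\}$ and the coarse part $\Omega^{\rm c}_\eta$. On $\Omega^{\rm f}_\eta$ one replaces $D^{m-1}\II u$ by $D^{m-1}u$, using the third local estimate and $\norm{D^{m-1}u(\cdot+\eta)-D^{m-1}u}_{L^2}\lesssim\abs{\eta}^{1/2}\abs{u}_{H^{m-1/2}}$; since $\sum_{h_T\le c\abs{\eta}}h_T\norm{u}_{H^{m-1/2}(\omega(T))}^2\lesssim\abs{\eta}\norm{u}_{H^{m-1/2}(\Omega)}^2$ this contributes $\lesssim\abs{\eta}^{1/2}\norm{u}_{H^{m-1/2}}$. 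On $\Omega^{\rm c}_\eta$, where $\abs{\eta}$ is below the local mesh size, one writes the difference of $D^{m-1}\II u$ as $\int_0^1\eta\cdot D^m\II u(\cdot+t\eta)\,dt$ away from faces and collects the jumps where the segment crosses a face, so that
$\norm{D^{m-1}\II u(\cdot+\eta)-D^{m-1}\II u}_{L^2(\Omega^{\rm c}_\eta)}^2\lesssim\abs{\eta}^2\sum_{h_T>c\abs{\eta}}\norm{D^m\II u}_{L^2(T)}^2+\abs{\eta}\sum_F\norm{[\![D^{m-1}\II u]\!]_F}_{L^2(F)}^2\lesssim\abs{\eta}\,\norm{u}_{H^{m-1/2}(\Omega)}^2$,
the first sum being controlled by the second local estimate (the weight $h_T^{-1}$ is dominated by $\abs{\eta}^{-1}$ on $\Omega^{\rm c}_\eta$) and the second by the jump estimate. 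Dividing by $\abs{\eta}$ and taking the supremum over $\eta$ proves \eqref{eq:thm:stability-discontinuous-i}. A handful of routine technicalities enter here: the translation $x\mapsto x+\eta$ respects neither $\Omega$ nor the fine/coarse splitting near $\partial\Omega$ and near the interface, which is dealt with in the usual way by working on $\Omega_\eta$, by using a Stein extension of $u$, and by absorbing an interface layer of width $\lesssim\abs{\eta}$ into $\Omega^{\rm c}_\eta$; and only $\abs{\eta}\lesssim\diam(\Omega)$ matters.

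For the sharper bound \eqref{eq:thm:stability-discontinuous-ii} on a quasi-uniform mesh (mesh size $\simeq h$) one runs the same scheme with every occurrence of $\norm{u}_{H^{m-1/2}}$ replaced by $\norm{u}_{B^{m-1/2}_{2,\infty}}$: decompose $u=u_0+u_1$ realising the $K$-functional of $(H^{m-1}(\Omega),H^m(\Omega))$ at the \emph{single} scale $t=h$, so that $\norm{u_0}_{H^{m-1}}+h\norm{u_1}_{H^m}\le 2K(h,u)\le 2h^{1/2}\norm{u}_{B^{m-1/2}_{2,\infty}}$, and then estimate the jump functional, $\norm{D^{m-1}(\II u-u)}_{L^2}$ and $\bigl(\sum_T\norm{D^m\II u}_{L^2(T)}^2\bigr)^{1/2}$ by treating the $u_0$-part with $\II\colon H^{m-1}\to H^{m-1}$ and an inverse estimate, and the $u_1$-part with Assumption~\ref{assumption:I-discontinuous}\eqref{item:assumption:I-discontinuous-iii}; quasi-uniformity collapses the two regimes $\abs{\eta}\lessgtr h$ of the previous paragraph into one, so no intermediate scales occur. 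I expect the main obstacle to be precisely the jump estimate together with the extraction of the fractional-order local estimates from Assumption~\ref{assumption:I-discontinuous} alone — this needs a careful localised interpolation and polynomial-approximation argument on the element patches, and in particular the inverse-type control of $\abs{q}_{H^m(\omega(T))}$ by $\abs{u}_{H^{m-1/2}(\omega(T))}$, which is exactly where the hypothesis $p\ge m-1$ is used.
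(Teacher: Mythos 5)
Your route (difference/modulus-of-smoothness characterisation of $B^{m-1/2}_{2,\infty}$, local fractional error estimates by patchwise interpolation, a face-jump estimate, and a fine/coarse splitting relative to $\abs{\eta}$) is genuinely different from the paper's argument, which estimates the $K$-functional directly with the mollified competitor $\average{\delta t}{(\II u)}$. But as written it has a real gap exactly at the fine/coarse interface, i.e.\ for points $x$ whose translate $x+\eta$ (or whose segment $[x,x+\eta]$) lies in the other regime. Your fine treatment (replace $D^{m-1}\II u$ by $D^{m-1}u$) needs $\norm{D^{m-1}(\II u-u)}_{L^2}$ to be $\abs{\eta}^{1/2}$-small also at the far end of the translation, which fails on a coarse element where you only have the factor $h_T^{1/2}\gg\abs{\eta}^{1/2}$; and your proposed fix, ``absorbing an interface layer of width $\lesssim\abs{\eta}$ into $\Omega^{\rm c}_\eta$'', fails on strongly graded meshes because the volume term of the line-integral bound on a fine element in that layer is $\abs{\eta}^2\norm{D^m\II u}^2_{L^2(T)}\lesssim \abs{\eta}^2h_T^{-1}\norm{u}^2_{H^{m-1/2}(\omega(T))}$, which is not $O(\abs{\eta})$ when $h_T\ll\abs{\eta}$. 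The natural alternative—an estimate of $D^{m-1}(\II u-u)$ on a boundary strip of width $\abs{\eta}$ inside a coarse element—does not follow from your three local estimates either: interpolating the $L^2$-stability with the $H^m$ strip estimate gives only $(\abs{\eta}h_T)^{1/2}$ per element, whereas summability over the coarse elements requires the factor $\abs{\eta}$. This is precisely where the paper invests its main effort: it introduces the scale-adapted decomposition $u=(u-J_tu)+J_tu$ with $J_t$ built on the length scale $\overline h=\max\{t,h\}$ (Corollary~\ref{cor:H12-estimate-scott-zhang}), whose $h$-weighted bounds $\norm{\overline h^{-1/2}(u-J_tu)}_{L^2(\Omega)}\lesssim\norm{u}_{H^{m-1/2}(\Omega)}$ make the strip/interface terms summable (steps 5--7 of the proof of Theorem~\ref{thm:stability-discontinuous}). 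Your scheme can likely be repaired, but only by importing such a two-scale decomposition of $u$ (or an equivalent device); the face-jump estimate alone cannot substitute for it.

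A second, smaller flaw: in the quasi-uniform case you claim a single decomposition at the scale $t=h$ suffices and ``no intermediate scales occur''. For $h\ll\abs{\eta}\ll\diam\Omega$ this fails: with $u=u_0+u_1$ chosen at scale $h$, the $u_1$-contribution to the translated difference is only $\min\{\abs{\eta}h^{-1/2},1\}\norm{u}_{B^{m-1/2}_{2,\infty}(\Omega)}$, which is not $\lesssim\abs{\eta}^{1/2}\norm{u}_{B^{m-1/2}_{2,\infty}(\Omega)}$ (take $\abs{\eta}\sim h^{1/2}$). One must decompose at scale $\max\{\abs{\eta},h\}$ and infimize, which is exactly how the paper's final step treats the two cases $t>h$ and $t\le h$. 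Apart from these points, your ingredients (the local fractional estimates with the $p\ge m-1$ polynomial argument, and the jump estimate via a common patch polynomial plus trace/inverse inequalities) are sound and would give a workable, more ``classical FEM'' style proof once the interface issue is handled.
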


\begin{remark}
For $m=1$, a possible choice for $\II$ is the $L^2(\Omega)$-orthogonal projection that trivially satisfies 
Assumption~\ref{assumption:I-discontinuous}. 
For $m=2$ the Scott-Zhang projection, introduced in 
\cite{scott-zhang90} and defined below, is an example of an operator $\II$
satisfying Assumption~\ref{assumption:I-discontinuous}. 
Therefore, Theorem~\ref{thm:stability-discontinuous}
provides a novel stability estimates for these projection operators in Besov spaces.
\eremk
\end{remark}

\subsection{Multilevel decomposition based on mesh hierarchies generated by NVB}
The multilevel decompositions will be based on mesh hierarchies that are engendered by 
\emph{newest vertex bisection} (NVB). For discussion of properties of NVB meshes we refer to \cite{KPP13} for 
the case $d = 2$ and to \cite{stevenson08} for the case $d \ge 3$. 
We consider sequences of regular meshes that are obtained by NVB-refinement from an
initial mesh $\TT_0$. 

\subsubsection{The finest common coarsening}

For two regular triangulations $\TT$, $\TT^\prime$ (obtained by NVB from
the same triangulation $\widehat\TT_0$), we define the \emph{finest common coarsening} as 
\begin{align}
\label{eq:def-fcc}
& \fcc(\TT,\TT^\prime) := \\
\nonumber 
& \qquad \underbrace{ \{T \in \TT\,:\, \exists T' \in \TT^\prime \mbox{ s.t. }  T' \subsetneq T\}}_{=:\TB_1}
\cup \underbrace{ \{T^\prime \in \TT^\prime\,:\, \exists T \in \TT \mbox{ s.t. }  T \subsetneq T^\prime\}}_{=:\TB_2} 
\cup \underbrace{ (\TT \cap \TT^\prime) }_{=:\TB_3}. 
\end{align}
We refer to Lemma~\ref{lemma:fcc} for the proofs that the three sets in the definition of the finest common coarsening
are pairwise disjoint and that $\fcc(\TT,\TT^\prime)$ is indeed a regular triangulation of $\Omega$. \newline

Let $\widehat\TT_\ell$ be the uniform refinement of $\widehat \TT_0$ of level $\ell$. We call 
$\operatorname*{level}(T):=\ell$ the \emph{level}
of an element $T \in \widehat\TT_\ell$.
Given a regular triangulation $\TT$, we will consider
$$
\Tt_\ell:= \fcc( \TT , \widehat \TT_\ell),
$$
which is in general a coarser mesh than the uniform triangulation $\widehat\TT_\ell$.

\subsubsection{Adapted Scott-Zhang operators}
We recall the basic construction of the Scott-Zhang operator of 
\cite{scott-zhang90} or \cite[Sec.~{4.8}]{brenner-scott02}. It will be convenient
in the proof of Lemma~\ref{lemma:scott-zhang-tilde-hat} to use Lagrange bases of the 
space $S^{p,1}(\TT')$ defined on a mesh $\TT'$, where $\TT'$ is either $\Th_\ell$ or $\fcc(\TT,\Th_\ell)$. 
\begin{enumerate}
\item 
On the reference $d$-simplex $\widehat T = \operatorname*{conv}\{z_1,\dots,z_{d+1}\}$, let the 
$\operatorname{dim} P_p$ nodes ${\mathcal N}(\widehat T)$ 
be the regularly spaced nodes as described in \cite[Sec.~{2.2}]{ciarlet76a} (called ``principal lattice'' there),
\begin{align*}
 \mathcal{N}(\widehat T) := 
 \Big\{x=\sum_{j=1}^{d+1}\lambda_j z_j\,:\, \sum_{j=1}^{d+1} \lambda_j =1, \; 
 \lambda_j \in \Big\{\frac i p,i=0,\dots,p\Big\} \Big\}.
\end{align*}
We note that any polynomial in $P_p$ is uniquely determined by its values on $\mathcal{N}(\widehat T)$.

\item 
The nodes ${\mathcal N}(\TT')\subset \overline{\Omega}$ for the mesh $\TT'$ are the push-forward of the nodes 
of ${\mathcal N}(\widehat T)$ under the element maps. The Lagrange basis $\{\varphi_{z,\TT'}\,|\, z \in {\mathcal N}(\TT')\}$ 
of $S^{p,1}(\TT')$ (with respect to the nodes ${\mathcal N}(\TT')$) 
is characterized by $\varphi_{z,\TT'}(z') = \delta_{z,z'}$ for all $z$, $z' \in {\mathcal N}(\TT')$; 
here, $\delta_{z,z'}$ is the Kronecker Delta defined as $\delta_{z,z'}  = 1$ if $z = z'$ and $\delta_{z,z'} = 0$ for $z \ne z'$. 
\item 
The basis functions $\varphi_{z,\TT'}$ have the following support properties:  
a) if $z \in T$ for some $T \in \TT'$, then $\operatorname{supp} \varphi_{z,\TT'} \subset \overline{T}$; 
b) if $z \in f$ for some $j$-dimensional face ($j < d$) of $T$, then 
$\operatorname{supp} \varphi_{z,\TT'} \subset \omega_f$, where 
$\omega_f = \operatorname{int}\bigcup \{\overline{T}\colon f \mbox{ is $j$-face of $T \in \TT'$}\}$. 
In particular, if $z \not \in \overline{T}$, then $\operatorname{supp} \varphi_{z,\TT'}\cap T = \emptyset$. 
\item 
For each element $T\in\TT'$, one has a dual basis $\{\varphi^\ast_{z,T}\,:\, z \in \overline{T}\}\subset P_p(T)$ 
of $P_p(T)$, i.e., $\int_{T} \varphi^\ast_{z,T} \varphi_{z',\TT'} = \delta_{z,z'}$ for all nodes 
$z$, $z' \in \overline{T}$. 
In particular, this gives
\begin{equation}
\label{eq:averaging-gleich-punktauswertung} 
\int_{T} \varphi^\ast_{z,T} u \, dx = u(z) \qquad \forall T \in \TT' \quad \forall u \in P_p(T). 
\end{equation}
\item 
For each node $z \in {\mathcal N}(\TT')$, define the \emph{admissible set of averaging elements} 
as ${\mathcal A}(z,\TT'):=\{T \in \TT'\colon z \in \overline{T}\}$. A Scott-Zhang operator is 
then defined by selecting, for each $z$, a $T_z \in {\mathcal A}(z,\TT')$ and setting 
\begin{align}\label{eq:SZdef}
I^{SZ} u := \sum_{z\in{\mathcal N}(\TT')} \varphi_{z,\TT'} \left(\int_{T_z} \varphi^\ast_{z,T_z} u\, dx\right). 
\end{align}
\end{enumerate} 
For nodes $z$ that are on the boundary of an element, the admissible set ${\mathcal A}(z, \TT')$ has more than 
one element. However, from (\ref{eq:averaging-gleich-punktauswertung}), we get that the values 
of the functionals coincide on $S^{p,1}(\TT')$: 
\begin{equation}
\label{eq:averaging-egal}
\int_{T_z} \varphi^\ast_{z,T_z} u \; dx = u(z) = \int_{T'_z} \varphi^\ast_{z,T'_z} \,u \, dx 
\qquad \forall T_z, T'_z \in {\mathcal A}(z,\TT) \quad 
\forall u \in S^{p,1}(\TT'). 
\end{equation}
We also highlight that (\ref{eq:averaging-gleich-punktauswertung}) implies that
$I^{SZ}$ is a projection onto $S^{p,1}(\TT)$. 
Such Scott-Zhang operators satisfy the stability and approximation properties 
of Assumption~\ref{assumption:I-discontinuous} with constants that 
solely depend on $p$, the specific choice polynomial basis, 
the shape-regularity of the underlying triangulation, 
and $\Omega$. In particular, the constants are independent of the specific
choice of averaging region $T_z$. 

The freedom in the choice of the averaging element $T_z$ can be exploited to ensure
additional properties, see also \cite[Sec.~3]{diening-kreuzer-stevenson16},\cite[Sec.~4.3]{feischl2017optimal}.
A guiding principle in the following definition of our modified Scott-Zhang operator is 
that in the definition of $\widetilde I^{SZ}$ one selects the averaging element $T_z$ 
from the mesh $\TT$ whenever possible: 
\begin{definition}[adapted Scott-Zhang operators]
\label{def:adapted-SZ}
For given $\TT$ that is obtained by NVB-refinement from a regular triangulation
$\Th_0$ and $\Tt_\ell = \fcc(\TT, \Th_\ell)$, the operators 
$\widetilde I^{SZ}_\ell: L^2(\Omega) \rightarrow \widetilde V_\ell = S^{p,1}(\Tt_\ell)$ 
and 
$\widehat I^{SZ}_\ell: L^2(\Omega) \rightarrow \widehat V_\ell = S^{p,1}(\Th_\ell)$ 
are Scott-Zhang operators as defined in \eqref{eq:SZdef} with the following choice of 
averaging element $T_z$ for $\widetilde I^{SZ}_\ell$ and 
$\widehat I^{SZ}_\ell$: 
\begin{enumerate}[(1)]
\item
\label{item:def-SZ-1}
First, loop through all $T \in \Th_\ell \cap \Tt_\ell$ (in any fixed order) 
and select the averaging sets $T_z$ for the nodes $z \in \overline{T}$ as follows: 
  \begin{enumerate}[(a)]
    \item 
\label{item:def-SZ-1a}
    If $z \in T$, then select $T_z = T$ for both $\widehat I^{SZ}_\ell$ and $\widetilde I^{SZ}_\ell$. 
    \item 
\label{item:def-SZ-1b}
    If $z \in \partial T$ and the node $z$ has not been assigned an averaging set $T_z$ yet, then:  
     \begin{enumerate}[(i)]
        \item 
\label{item:def-SZ-1bi}
           If ${\mathcal A}(z,\Th_\ell)$ contains an element $T' \in \Th_\ell$ 
            that is a proper subset of an element $\widetilde T \in \TT$, then select this $T'$ to define
              $\widehat I^{SZ}_\ell$ and select $\widetilde T$ for the definition of $\widetilde I^{SZ}_\ell$. 
   \item 
   \label{item:def-SZ-1bii}  
     Else select $T$ for both $\widehat I^{SZ}_\ell$ 
     and $\widetilde I^{SZ}_\ell$.             
     \end{enumerate}
  \end{enumerate}
\item
\label{item:def-SZ-2}
Next, loop through all $T \in \Tt_\ell \setminus \Th_\ell$ (in any fixed order). 
Select, for the construction of $\widetilde I^{SZ}_\ell$, this $T$ as the 
averaging element for all nodes $z$ with $z \in \overline{T}$ that have 
not already been fixed in step (\ref{item:def-SZ-1}) or in a previous step of the loop. 
This completes the definition of $\widetilde I^{SZ}_\ell$. 
\item 
\label{item:def-SZ-3}
Finally, loop through all $T \in \Th_\ell \setminus \Tt_\ell$ (in any fixed order). 
Select, for the construction of $\widehat I^{SZ}_\ell$, this $T$ as the 
averaging element for all nodes $z$ with $z \in \overline{T}$ that have 
not already been fixed in step (\ref{item:def-SZ-1}) or in a previous step of the loop. 
This completes the definition of $\widehat I^{SZ}_\ell$. 
\end{enumerate}
\end{definition}
We note, that this definition of the adapted Scott-Zhang operators is exploited to show 
$\widehat I^{SZ}_\ell u = \widetilde I^{SZ}_\ell u $ for all $u \in S^{p,1}(\TT)$, which is 
proven in Lemma~\ref{lemma:scott-zhang-tilde-hat} below.

\subsubsection{The multilevel decomposition}
With the use of the adapted Scott-Zhang operators $\widetilde I^{SZ}_\ell$ and a mesh hierarchy
based on NVB meshes and the finest common coarsening between theses meshes and uniform refined meshes, 
we obtain a multilevel decomposition with norm equivalence in the Besov space $B^{3\theta /2}_{2,q}(\Omega)$
as a consequence of the stability estimate of Theorem~\ref{thm:stability-discontinuous}.

\begin{theorem}
\label{thm:multilevel-fcc}
Let $\TT$ be a mesh obtained by NVB-refinement of a triangulation $\Th_0$ with mesh size $\widehat h_0$. 
Let $\Th_\ell$ be the sequence of uniformly refined meshes starting from $\Th_0$
with mesh size $\hh_\ell = \hh_0 2^{-\ell}$. 
Set $\Tt_\ell:= \fcc(\TT,\Th_\ell)$.
Let $\widetilde{I}^{SZ}_\ell:L^2(\Omega) \rightarrow  S^{p,1}(\Tt_\ell)$ 
be the adapted Scott-Zhang operator 
defined in Definition~\ref{def:adapted-SZ}.
Then, on the space $S^{p,1}(\TT)$ 
the following three norms are equivalent with equivalence constants depending
only on $\Th_0$, $p$, $\theta \in (0,1)$, and $q \in [1,\infty]$: 
\begin{align*}
	&\|u\|_{B^{3\theta /2}_{2,q}(\Omega)}, \\
	& \| \widetilde{I}^{SZ}_0 u\|_{L^2(\Omega)} + \|(2^{3\theta \ell/2 } \|u - \widetilde{I}^{SZ}_\ell u\|_{L^2(\Omega)})_{\ell \ge 0}\|_{\ell^q}, \\
	& \| \widetilde{I}^{SZ}_0 u\|_{L^2(\Omega)} + \|(2^{3\theta \ell/2 } \|Q_\ell u\|_{L^2(\Omega)})_{\ell \ge 0}\|_{\ell^q},  
\end{align*}
where $Q_\ell = \widetilde{I}^{SZ}_{\ell+1} - \widetilde{I}^{SZ}_\ell$. 
\end{theorem}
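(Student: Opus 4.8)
The plan is to establish the three-way equivalence by proving a chain of inequalities, using Theorem~\ref{thm:stability-discontinuous} as the engine for stability and combining it with standard multilevel machinery (summation-by-parts, discrete Hardy inequalities, and the Oswald-type norm equivalence on uniformly refined hierarchies). First I would record the building blocks: by Lemma~\ref{lemma:scott-zhang-tilde-hat}, for $u \in S^{p,1}(\TT)$ we have $\widehat I^{SZ}_\ell u = \widetilde I^{SZ}_\ell u$, so that the quantities involving $\widetilde I^{SZ}_\ell$ on $S^{p,1}(\TT)$ may be freely replaced by the corresponding quantities for the Scott-Zhang operators $\widehat I^{SZ}_\ell$ on the \emph{uniformly} refined meshes $\Th_\ell$. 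This is the key reduction: on the uniform hierarchy $(\Th_\ell)_\ell$ one has the classical multilevel norm equivalence for $B^{3\theta/2}_{2,q}(\Omega)$ (see \cite{oswald94,bramble-pasciak-vassilevski99}), which immediately gives the equivalence of the first norm $\|u\|_{B^{3\theta/2}_{2,q}}$ with $\|\widehat I^{SZ}_0 u\|_{L^2} + \|(2^{3\theta\ell/2}\|u - \widehat I^{SZ}_\ell u\|_{L^2})_\ell\|_{\ell^q}$, which in turn equals the second expression in the theorem. Care must be taken that the stated classical equivalence is formulated for exactly the Scott-Zhang operators used here; if not, one replaces $\widehat I^{SZ}_\ell$ by, say, the $L^2$-projection onto $\widehat V_\ell$ and uses Theorem~\ref{thm:stability-discontinuous} together with the approximation property to pass between the two, since both families satisfy Assumption~\ref{assumption:I-discontinuous}.

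Next I would handle the equivalence of the second and third expressions, which is purely a summation-by-parts argument and does not see the Besov structure directly. Writing $Q_\ell = \widetilde I^{SZ}_{\ell+1} - \widetilde I^{SZ}_\ell$ and using that $\widetilde I^{SZ}_\ell u \to u$ in $L^2$ (a consequence of the approximation property in Assumption~\ref{assumption:I-discontinuous}\eqref{item:assumption:I-discontinuous-iii}, since $S^{p,1}(\TT) \subset H^1(\Omega)$ for $p \ge 1$), one has $u - \widetilde I^{SZ}_\ell u = \sum_{k \ge \ell} Q_k u$. The estimate of the second by the third expression is then a direct application of the discrete Hardy inequality in $\ell^q$ (with the geometric weights $2^{3\theta\ell/2}$, $\theta > 0$ strictly, which is where the exclusion of the endpoint $\theta = 0$ enters). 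The converse bound, third by second, is even easier: $Q_\ell u = (u - \widetilde I^{SZ}_\ell u) - (u - \widetilde I^{SZ}_{\ell+1} u)$, so $\|Q_\ell u\|_{L^2} \le \|u - \widetilde I^{SZ}_\ell u\|_{L^2} + \|u - \widetilde I^{SZ}_{\ell+1} u\|_{L^2}$, and the geometric weights absorb the shift in index up to a constant depending on $\theta$. One also needs to note that $\widetilde I^{SZ}_0 u = \widehat I^{SZ}_0 u$ is the common coarse-level term appearing in all three expressions, so it plays no role in these manipulations.

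The one genuinely new ingredient — and the step I expect to be the main obstacle — is that the multilevel quantity is built from the \emph{finest common coarsening} meshes $\Tt_\ell = \fcc(\TT,\Th_\ell)$ rather than from the uniform meshes $\Th_\ell$ directly. The identity $\widehat I^{SZ}_\ell u = \widetilde I^{SZ}_\ell u$ on $S^{p,1}(\TT)$ from Lemma~\ref{lemma:scott-zhang-tilde-hat} is precisely what makes this transparent \emph{on $S^{p,1}(\TT)$}: it lets one transfer the entire multilevel sum on the adaptive hierarchy to the uniform hierarchy, where the classical theory applies verbatim. I would spell out carefully why this identity is legitimate here: the norm equivalence is only claimed for $u \in S^{p,1}(\TT)$, which is exactly the hypothesis under which the two operators agree, so there is no mismatch. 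The remaining technical point is verifying that the adapted operators $\widetilde I^{SZ}_\ell$ (and hence $\widehat I^{SZ}_\ell$) indeed satisfy Assumption~\ref{assumption:I-discontinuous} with $m = 2$ uniformly in $\ell$ — this is a standard but non-trivial check for Scott-Zhang operators with \emph{any} admissible choice of averaging element, as noted in the text following Definition~\ref{def:adapted-SZ}; the shape-regularity of $\Tt_\ell$ and $\Th_\ell$ is inherited from $\Th_0$ because NVB and finest common coarsening preserve shape-regularity (Lemma~\ref{lemma:fcc}). With those pieces in place, Theorem~\ref{thm:stability-discontinuous} supplies the stability $\|\widetilde I^{SZ}_\ell u\|_{B^{3/2}_{2,\infty}} \lesssim \|u\|_{H^{3/2}}$ needed to control the coarse-to-fine direction via interpolation between $L^2$ and $B^{3/2}_{2,\infty}$ at exponent $\theta$, closing the loop.
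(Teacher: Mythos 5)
Your proposal is correct in substance and shares the paper's central reduction --- the identity $\widetilde I^{SZ}_\ell u = \widehat I^{SZ}_\ell u$ on $S^{p,1}(\TT)$ from Lemma~\ref{lemma:scott-zhang-tilde-hat} --- but it organizes the interpolation-space bookkeeping differently. The paper applies the abstract Jackson--Bernstein result \cite[Thm.~3.5.3]{cohen2003numerical} to the \emph{discrete} pair $X=(S^{p,1}(\TT),\|\cdot\|_{L^2})$, $Y=(S^{p,1}(\TT),\|\cdot\|_{B^{3/2}_{2,\infty}})$ with the subspaces $S^{p,1}(\Tt_\ell)$ (verifying Jackson and Bernstein exactly by your transfer trick plus standard approximation and Lemma~\ref{lemma:inverse-estimate}), and then must identify $(X,Y)_{\theta,q}$ with the $B^{3\theta/2}_{2,q}$-norm on the discrete space; this is Corollary~\ref{cor:interpolation}, and it is precisely here that the endpoint stability of Theorem~\ref{thm:stability-discontinuous} on the locally refined mesh $\TT$ is needed. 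You instead restrict to $S^{p,1}(\TT)$ a norm equivalence for the \emph{continuous} space $B^{3\theta/2}_{2,q}(\Omega)$ on the uniform hierarchy $(\Th_\ell)_\ell$, handle the second-vs-third equivalence by telescoping and a discrete Hardy inequality, and never need the discrete interpolation identity at all; the constants remain independent of $\TT$ because the adaptive mesh enters only through the exact identity of Lemma~\ref{lemma:scott-zhang-tilde-hat}. This is a legitimate and arguably more elementary route for this particular theorem, with two caveats you should make explicit: (i) the ``classical'' uniform-mesh equivalence must be available for the specific operators $\widehat I^{SZ}_\ell$, general $q\in[1,\infty]$, and the full range $3\theta/2<3/2$ --- if the citation of \cite{oswald94,bramble-pasciak-vassilevski99} is not in exactly this form, you can reproduce it by applying the same theorem of \cite{cohen2003numerical} to the continuous pair $(L^2(\Omega),B^{3/2}_{2,\infty}(\Omega))$, using standard quasi-uniform approximation for Jackson and Lemma~\ref{lemma:inverse-estimate} for Bernstein (comparison with the $L^2$-projection needs only $L^2$-stability and the projection property, not Theorem~\ref{thm:stability-discontinuous}); and (ii) your closing attribution is slightly off: the coarse-to-fine direction is controlled by the Bernstein/inverse estimate of Lemma~\ref{lemma:inverse-estimate} on the quasi-uniform meshes, not directly by the stability $H^{3/2}\to B^{3/2}_{2,\infty}$ of Theorem~\ref{thm:stability-discontinuous}, which in the paper's argument enters only through Corollary~\ref{cor:interpolation}. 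With these points spelled out, your argument closes.
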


\subsection{A realization of an optimal multilevel preconditioner for the fractional Laplacian}
The final main result of this paper presents a multilevel diagonal preconditioner with uniformly bounded condition
number on locally refined triangulations for the fractional Laplacian.

With the integral fractional Laplacian defined as the principal value integral
\begin{align*}
(-\Delta) ^su(x) := C(d,s) \,\text{P.V.} \int_{\mathbb{R} ^d} \frac{u (x)- u (y)}{|x-y|^ {d+2s}}dy \qquad C (d,s) := 2 ^{2s} s \frac{\Gamma (s + d / 2)}{\pi ^{d /2} \Gamma (1-s)},
\end{align*}
where $\Gamma (\,\cdot\,)$  denotes the Gamma function,
we consider the equation
\begin{align}\label{eq:modelproblem}
\nonumber (- \Delta) ^s u &=f \qquad \text{in}  \quad \Omega, \\
u &=0 \qquad \text{in}  \quad \Omega ^ c
\end{align}
for a given right-hand side $f \in H^{-s}(\Omega)$. Here, $H^{-s}(\Omega)$ denotes the dual space of the
Hilbert space 
\begin{align*}
\widetilde{H}^s(\Omega) = \{ u \in H^s (\mathbb{R}^d) \,: \, u \equiv 0 \,\, \text{on} \,\, \Omega ^c\},\qquad 
 \norm{v}_{\widetilde{H}^{s}(\Omega)}^2 := \norm{v}_{H^s(\Omega)}^2 + \norm{v/\rho^s}_{L^2(\Omega)}^2,
\end{align*}
where $\rho(x) = \operatorname{dist}(x,\partial\Omega)$ is the distance of a point $x \in \Omega$ to the boundary $\partial \Omega$.

The weak formulation of~\eqref{eq:modelproblem} is given by finding $u \in \widetilde{H}^s(\Omega)$ such that
\begin{align}\label{eq:weakform}
 a(u,v) 
 := \frac{C(d,s)}{2} \int\int_{\R^d\times\R^d} 
 \frac{(u(x)-u(y))(v(x)-v(y))}{\abs{x-y}^{d+2s}} \, dx \, dy = \int_{\Omega} fv \,dx 
 \quad \forall v \in \widetilde{H}^s(\Omega).
\end{align}
Existence and uniqueness of $u \in \widetilde{H}^s(\Omega)$ follow from
the Lax--Milgram lemma. \newline

With a given regular triangulation $\TT_0$, 
we consider two hierarchical sequence of meshes $\mathcal{T}_\ell$, $\widetilde{\mathcal{T}}_\ell$, $\ell = 0,\dots,L$:

\begin{enumerate}
 \item $\mathcal{T}_\ell$ is generated by an adaptive algorithm (see, e.g., \cite{Doer96})
of the form \texttt{SOLVE} -- \texttt{ESTIMATE} -- \texttt{MARK} -- \texttt{REFINE}, where the step \texttt{REFINE}
is done by newest vertex bisection. In the following,
both for the case of piecewise linear and piecewise constant basis function, 
we always assume that the meshes $\TT_\ell$ are regular in the sense of Ciarlet.

\item From a given triangulation $\TT_L$ obtained by NVB refinement of $\TT_0$ - e.g. given from an adaptive algorithm -
      the finest common coarsening with the uniform refinements of $\TT_0$, denoted by $\widehat\TT_\ell$, 
      provides a hierarchy of meshes 
      $\widetilde{\mathcal{T}}_\ell=\fcc(\mathcal{T}_L,\widehat\TT_\ell)$.
\end{enumerate}

 \subsubsection{A local multilevel diagonal preconditioner for adaptively refined meshes} 

 We start with the case of the adaptively generated mesh hierarchy $(\TT_\ell)_\ell$.
On the mesh $\TT_\ell$ we discretize with piecewise constants (for $0<s<1/2$)
in the space $V_\ell^0 = S^{0,0}(\mathcal{T}_\ell)$ 
and piecewise linears (for $0<s<1$) in the space
$V_\ell^1 = S^{1,1}_0(\mathcal{T}_\ell)$. If the distinction between $V_\ell^0$ and $V_\ell^1$ is not 
essential, we write $V_\ell$ meaning
$V_\ell \in \{V_\ell^0,V_\ell^1\}$. 
The Galerkin discretization  \eqref{eq:modelproblem} in $V_\ell$ of  reads as: 
Find $u_\ell \in V_\ell$, such that
 \begin{align}\label{eq:GalerkinDiscretization}
a(u_\ell,v_\ell) = \skp{f,v_\ell}_{L^2(\Omega)} \qquad  \forall v_{\ell} \in V_\ell.
 \end{align}
 Moreover, on the uniform refined meshes $\widehat \TT_\ell$, in the same way,
 we define the discrete spaces 
 \begin{align*}
 \widehat V_\ell^0 = S^{0,0}(\mathcal{T}_\ell), \quad \widehat V_\ell^1 = S^{1,1}_0(\mathcal{T}_\ell), \qquad \text{and}
 \quad 
 \widehat V_\ell \in \{ \widehat V_\ell^0, \widehat V_\ell^1\}.
 \end{align*}
 
 We define sets of ``characteristic'' points $\mathcal{N}_\ell^i$, $i=0,1$ representing the degrees of freedom of $V_\ell$. 
 For the  piecewise constant case $V_\ell^0$, the set $\mathcal{N}_\ell^0$
 contains all barycenters of elements of the mesh $\TT_\ell$. 
  For the piecewise linear case $V_\ell^1$, we 
 denote the set of all interior vertices of the mesh $\mathcal{T}_\ell$ by  $\mathcal{N}_\ell^1$.
 If the distinction between $\mathcal{N}_\ell^0$ and $\mathcal{N}_\ell^1$ is not essential, we 
 will write $\mathcal{N}_\ell$ meaning $\mathcal{N}_\ell \in \{\mathcal{N}_\ell^0,\mathcal{N}_\ell^1\}$
 is either $\mathcal{N}_\ell^0$ if $V_\ell = V_\ell^0$ or 
 $\mathcal{N}_\ell^1$ if $V_\ell = V_\ell^1$ and call $z \in \mathcal{N}_\ell$  nodes.

 We choose a basis of $V_\ell = \operatorname*{span}\{\varphi^\ell_{z_j}, z_j \in \mathcal{N}_\ell, j=1,\dots,N_\ell\}$ -- 
 for the piecewise constants we take the characteristic functions $\varphi_{z_j}^\ell = \chi_{T_j}$ of the element 
satisfying $z_j \in T_j \in \mathcal{T}_\ell$,
and for the piecewise linears we take hat functions corresponding to the interior nodes defined by 
$\varphi_{z_j}^\ell(z_i) = \delta_{j,i}$ for all nodes $z_i \in \mathcal{N}_\ell$. 
With these bases, we can write  $u_{\ell} =\sum_{j=1}^{N_\ell}\mathbf{x}^\ell_j \varphi_{z_j}^{\ell}$, and 
\eqref{eq:GalerkinDiscretization} is equivalent to solving the linear system
 \begin{align}\label{linear system}
\mathbf{A}^{\ell}\mathbf{x}^\ell=\mathbf{b}^{\ell}
 \end{align}
with the stiffness matrix $\mathbf{A}^\ell$ and load vector $\mathbf{b}^\ell$
 \begin{align}
 \mathbf{A}^{\ell}_{kj}:= a(\varphi_{z_j}^{\ell},\varphi_{z_k}^{\ell}), \qquad  \mathbf{b}_k^{\ell}:= \skp{f, \varphi_{z_k}^{\ell}}_{L^2(\Omega)}.
 \end{align}

 Again, we mention that the condition number of the unpreconditioned Galerkin matrix grows like 
 $  \kappa(\mathbf{A}^\ell) \sim N_\ell^{2s/d} \left(\frac{h^\ell_{\rm max}}{h^\ell_{\rm min}}\right)^{d-2s},$
 which stresses the need for a preconditioner in order to use an iterative solver.

 For fixed $L \in \N_0$, we  introduce a \emph{local multilevel diagonal preconditioner} 
 $(\mathbf{B}^L)^{-1}$ of BPX-type  
 for the stiffness matrix $\mathbf{A}^{L}$ from \eqref{linear system} in the same way as in
 \cite{feischl2017optimal,ainsworth2003multilevel}. That is, 
following  \cite{feischl2017optimal}, we define 
the patch of a node $z \in \mathcal{N}_\ell$ as 
$$\omega_\ell(z):=\operatorname*{interior}\bigcup\{\overline{T}\,:\, T \in \TT_\ell, z \in \overline{T}\}.$$ 

The sets $\mathcal{M}_\ell^i$, $i=0,1$, defined in the following describe the changes in the mesh hierarchy between the level 
$\ell$ and $\ell-1$ and are crucial for the definition of the local diagonal scaling.  
For the case of piecewise linears, we define the sets $\mathcal{M}_\ell^1$ as
the sets of new vertices and their direct neighbors in the mesh $\TT_\ell$: 
We set $\mathcal{M}_0^1:= \mathcal{N}_0^1$ and 
 \begin{align}
  \mathcal{M}_\ell^1 := \mathcal{N}_\ell^1 \backslash \mathcal{N}_{\ell-1}^1 
 \cup \{ z \in \mathcal{N}_\ell^1  \cap \mathcal{N}_{\ell-1}^1 \, :\, \omega_\ell(z) \subsetneqq \omega_{\ell-1}(z) \} \quad \ell \geq 1.
 \end{align}
For the case of a piecewise constant discretization, we define  
 the set $\mathcal{M}_\ell^0$ simply as the barycenters corresponding to the new elements, i.e.,
 $\mathcal{M}_\ell^0 := \mathcal{N}_\ell^0 \backslash \mathcal{N}_{\ell-1}^0$ for $\ell \geq 1$.
 In the same way as for the nodes $\mathcal{N}_\ell$ we write $\mathcal{M}_\ell$ to either 
 be $\mathcal{M}_\ell^0$ and $\mathcal{M}_\ell^1$, which should be clear from context.
 
The local multilevel diagonal preconditioner is given by
  \begin{align}\label{eq:LMDPrec}
  (\mathbf{B}^{L})^{-1}:= \sum_{\ell=0}^{L}\mathbf{I} ^{\ell}\; \mathbf{D}^{\ell}_{\rm inv} ( \mathbf{I} ^{\ell})^T,
  \end{align}
 where, with $N_\ell:= \# \mathcal{N}_\ell$, the appearing matrices are defined as
 \begin{itemize}
 	\item 
 $\mathbf{I}^{\ell} \in \mathbb{R}^{N_L \times  N_\ell}$ denotes the identity  matrix corresponds to  the embedding 
 $\mathcal{I}^{\ell} :V_\ell  \rightarrow V_L$.
 \item
 $\mathbf{D}^{\ell}_{\rm inv} \in \mathbb{R}^{N_\ell \times N_\ell}$ is a diagonal matrix with entries 
 $(\mathbf{D}^{\ell}_{\rm inv})_{jk} = \begin{cases}
 (\mathbf{A}^\ell_{jj})^{-1} \delta_{jk}  & j \; : \; z_j \in \mathcal{M}_\ell, \\
 0&\text{ otherwise}   
 \end{cases},$
i.e., the entries of the diagonal matrix are the inverse diagonal entries of the matrix $\mathbf{A}^\ell$ corresponding to the degrees of freedom in $\mathcal{M}_\ell$.
   \end{itemize}
    Moreover, we define the additive Schwarz matrix  
    $\mathbf{P}_{AS}^{L}: =   {(\mathbf{B}^{L})} ^{-1}\mathbf{A}^{L}$. 
Instead of solving \eqref{linear system} for $\ell=L$, we solve the following preconditioned linear systems
\begin{align}
\mathbf{P}_{AS}^{L}\mathbf{ x} ^{L} = {(\mathbf{B}^{L})} ^{-1} \mathbf{b} ^{L}. 
\end{align}
The following theorem is the main result of this section and provides the optimal bounds to the 
eigenvalues of the preconditioned matrix.
 \begin{theorem}\label{thm:ASfractional}
The minimal and maximal eigenvalues of the additive Schwarz  matrix $\mathbf{P}_{AS}^{L}$ are bounded by
 \begin{align}
 c \le \lambda _ {\min}\left( \mathbf{P}_{AS}^{L} \right)  \qquad \text{and} \qquad 
 \lambda _ {\max}\left(\mathbf{P}_{AS}^{L} \right) \le C,
 \end{align}
 where the constants $c$, $C>0$ depend only on $\Omega$, $d$, $s$, and the initial triangulation $\TT_0$.
 \end{theorem}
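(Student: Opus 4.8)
The plan is to cast the bound on the extremal eigenvalues of $\mathbf{P}_{AS}^L$ into the standard abstract framework for additive Schwarz methods: the preconditioner $(\mathbf{B}^L)^{-1}$ from \eqref{eq:LMDPrec} corresponds to a space decomposition of $V_L$ into the sum of the one-dimensional subspaces $\operatorname*{span}\{\varphi_{z}^\ell\}$ for $z \in \mathcal{M}_\ell$, $\ell = 0,\ldots,L$, with the local bilinear forms being the diagonal of $\mathbf{A}^\ell$ (i.e.\ $\|\varphi_z^\ell\|_a^2$). By the classical Lions/Widlund lemma, $\lambda_{\min}(\mathbf{P}_{AS}^L)$ is bounded below by the reciprocal of the \emph{stable decomposition constant} $C_0$ (every $v \in V_L$ admits a decomposition $v = \sum_{\ell,z} v_z^\ell$ with $\sum_{\ell,z} \|v_z^\ell\|_a^2 \le C_0 \|v\|_a^2$), while $\lambda_{\max}(\mathbf{P}_{AS}^L)$ is bounded above by a constant times the maximal number of levels with overlapping supports plus the spectral radius of the strengthened-Cauchy-Schwarz matrix. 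The latter is controlled by finite overlap/Besov-type arguments (locality of the bisection hierarchy), so the real work is the lower bound.

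For the stable decomposition I would use the multilevel decomposition of Theorem~\ref{thm:multilevel-fcc}, but first I have to connect the adaptively generated hierarchy $(\TT_\ell)_\ell$ to the NVB/finest-common-coarsening framework in which that theorem is stated. The key point is that the increments $V_\ell \ominus V_{\ell-1}$ in the adaptive hierarchy are \emph{locally} supported in $\bigcup_{z \in \mathcal{M}_\ell}\omega_\ell(z)$, since away from the newly refined region $\TT_\ell$ and $\TT_{\ell-1}$ coincide; this is exactly why $\mathbf{D}^\ell_{\rm inv}$ only scales the degrees of freedom in $\mathcal{M}_\ell$. Using the adapted Scott-Zhang operators $\widetilde{I}^{SZ}_\ell$ (together with Lemma~\ref{lemma:scott-zhang-tilde-hat}, which lets one pass between the adaptive meshes and the fcc meshes consistently), one builds the decomposition $v = \widetilde{I}^{SZ}_0 v + \sum_{\ell \ge 1}(\widetilde{I}^{SZ}_\ell - \widetilde{I}^{SZ}_{\ell-1})v$ and then splits each level contribution into its nodal components supported on $\mathcal{M}_\ell$. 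The norm equivalence from Theorem~\ref{thm:multilevel-fcc} (with $3\theta/2 = s$, i.e.\ $\theta = 2s/3$, valid for $s\in(0,1)$; for piecewise constants one instead uses the $B^{1/2}_{2,\infty}$ endpoint of Theorem~\ref{thm:stability-discontinuous} with the appropriate rescaling, legitimate for $s \in (0,1/2)$) gives $\sum_\ell 2^{2s\ell}\|Q_\ell v\|_{L^2}^2 \lesssim \|v\|_{\widetilde H^s(\Omega)}^2 \simeq \|v\|_a^2$; an inverse inequality $\|\varphi_z^\ell\|_a^2 \lesssim h_z^{-2s}\|\varphi_z^\ell\|_{L^2}^2$ (with $h_z \sim 2^{-\ell}\widehat h_0$ on the relevant patch, since for $z \in \mathcal{M}_\ell$ the local mesh size is genuinely $2^{-\ell}$-scale) then converts the $L^2$-control of the nodal components into $a$-norm control, yielding $C_0 = \mathcal{O}(1)$ independent of $L$ and of the local refinement. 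The boundary weight in $\|\cdot\|_{\widetilde H^s}$ and the equivalence $a(\cdot,\cdot) \simeq \|\cdot\|_{\widetilde H^s(\Omega)}^2$ on $\widetilde H^s(\Omega)$ are standard (Lax--Milgram plus the fractional Hardy inequality), but one must take care that the multilevel norm equivalence — stated on $\Omega$ for $B^{3\theta/2}_{2,q}$ — transfers to the $\widetilde H^s$ setting with zero exterior data; this is where splitting into interior and near-boundary contributions, or equivalently invoking the fact that $S^{1,1}_0(\TT) \subset \widetilde H^s(\Omega)$ with norm equivalence up to the boundary weight, is needed.

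For the upper bound I would argue that each $v \in V_L$ interacts, at a fixed level $\ell$, with only $\mathcal{O}(1)$ neighbouring nodes (shape regularity), and across levels the supports $\omega_\ell(z)$, $z \in \mathcal{M}_\ell$, have bounded overlap in the sense relevant for NVB hierarchies — this is again where the finest-common-coarsening structure and Lemma on properties of fcc meshes enter, ensuring that the number of levels $\ell$ for which a fixed point $x$ lies in some $\omega_\ell(z)$ with $z \in \mathcal{M}_\ell$ is $\mathcal{O}(1 + \log(\text{local refinement depth}))$, and then a Besov-type summation (the geometric decay $2^{-2s\ell}$ against the inverse estimate) kills the logarithm so that the strengthened Cauchy--Schwarz matrix has bounded spectral radius. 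Concretely, $\lambda_{\max} \le \sup_{v}\sum_{\ell,z}\|(\text{projection of }v)_z^\ell\|_a^2 / \|v\|_a^2$ is handled by the colouring/overlap estimate together with the inverse inequality and the fact that $\sum_\ell 2^{-2s(\ell-k)}$ over levels $\ell \ge k$ converges. I expect the \textbf{main obstacle} to be the first step of the lower bound: rigorously reducing the \emph{adaptively} generated hierarchy $(\TT_\ell)$ — which is not of the form $\fcc(\TT,\widehat\TT_\ell)$ — to the hypotheses of Theorem~\ref{thm:multilevel-fcc}, i.e.\ showing that the local level counter $\mathcal{M}_\ell$ and the genuine local mesh size are compatible so that the $L^2$-norm equivalence with weights $2^{s\ell}$ can be localised to the newly refined patches; the honest route is to interleave $(\TT_\ell)$ with the fcc hierarchy $\widetilde\TT_\ell = \fcc(\TT_L,\widehat\TT_\ell)$, use Lemma~\ref{lemma:scott-zhang-tilde-hat} to identify the Scott-Zhang images, and carefully track that no node is counted on more than $\mathcal{O}(1)$ levels of the adaptive hierarchy, which is precisely the geometric content encoded in the definition of $\mathcal{M}_\ell^1$ via the patch-shrinking condition $\omega_\ell(z) \subsetneqq \omega_{\ell-1}(z)$.
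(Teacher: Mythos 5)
Your high-level skeleton (stable decomposition for $\lambda_{\min}$, strengthened Cauchy--Schwarz plus local stability for $\lambda_{\max}$, and the passage from the additive Schwarz operator to the matrix $\mathbf{P}_{AS}^{L}$ in the $\mathbf{A}^L$-inner product, cf.\ \eqref{eq:matrixOperator} and Propositions~\ref{prop:Equiv}, \ref{prop:spectralEquiv}) coincides with the paper's strategy, but the two central steps have genuine gaps. For the lower bound, your conversion of the level-$\ell$ pieces into the diagonal scaling rests on the claim that for $z \in \mathcal{M}_\ell$ the local mesh size is $\simeq 2^{-\ell}\widehat h_0$. That statement is exactly Lemma~\ref{lem:Ml} for the hierarchy $\widetilde\TT_\ell = \fcc(\TT_L,\widehat\TT_\ell)$, but it is \emph{false} for the adaptively generated hierarchy $(\TT_\ell)_\ell$ to which Theorem~\ref{thm:ASfractional} refers: a node created at adaptive step $\ell$ may lie in elements of diameter $\sim \widehat h_0/2$ (the algorithm may at step $\ell$ bisect an element untouched since $\TT_0$), so the adaptive level counter carries no mesh-size information, whereas the preconditioner uses the true local scales $\mathbf{A}^\ell_{jj} \simeq h_\ell(z_j)^{d-2s}$. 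Hence the weights $2^{2s\ell}$ delivered by Theorem~\ref{thm:multilevel-fcc}/Corollary~\ref{cor:lowerbound} cannot be matched to $\mathbf{D}^\ell_{\rm inv}$, and your proposed ``interleaving'' of $(\TT_\ell)_\ell$ with the fcc hierarchy remains a sketch precisely where the proof has to happen (the sets $\mathcal{M}_\ell$ and $\widetilde{\mathcal M}_\ell$, and even the numbers of levels of the two hierarchies, are unrelated). The paper does not attempt this reduction: for the adaptive hierarchy the stable decomposition is taken essentially verbatim from \cite[Sec.~4.5]{feischl2017optimal} (localized $L^2$-orthogonal projections onto the uniform meshes $\widehat\TT_\ell$), while the Scott-Zhang/fcc machinery (Lemma~\ref{lemma:Isz-difference}, Lemma~\ref{lem:stabledec}, Corollary~\ref{cor:lowerbound}) is reserved for Theorem~\ref{thm:ASfractional2}.

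For the upper bound, your argument leans on finite overlap and locality of the supports $\omega_\ell(z)$, but for the integral fractional Laplacian disjointness of supports does not decouple the bilinear form: $a(u_k,u_m)$ does not vanish for functions with disjoint supports, so colouring/overlap counts (and the unsubstantiated ``$\mathcal{O}(1+\log)$ overlap killed by geometric decay'') do not yield the level decay you need. In the paper the decay $\mathcal{E}_{km} \simeq 2^{-(k-m)(s-\beta)}$, which bounds $\rho(\mathcal{E})$ and hence $\lambda_{\max}$, comes from the strengthened Cauchy--Schwarz inequality of Lemma~\ref{lem:strengthenedCS}, whose engine is the weighted inverse estimate $\norm{h^{s-\beta}w^\beta(-\Delta)^s v}_{L^2(\Omega)} \lesssim \norm{v}_{\widetilde H^s(\Omega)}$ (\eqref{eq:investS11}--\eqref{eq:investS12} from \cite{faustmann2019quasi} for piecewise linears, Lemma~\ref{lem:invest} for piecewise constants), combined with a weighted Cauchy--Schwarz, a scaling/Poincar\'e argument, and the local stability of Lemma~\ref{lem:localstab}; the algebraic summation then follows \cite[Sec.~4.6]{feischl2017optimal}. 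This nonlocal inverse estimate is the key tool absent from your outline --- the inverse inequality you do invoke, $\norm{\varphi_z^\ell}_a^2 \lesssim h_z^{-2s}\norm{\varphi_z^\ell}_{L^2(\Omega)}^2$, is only the local-stability ingredient and does not produce any decay between different levels.
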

 
 \begin{remark}
  The preconditioner $ (\mathbf{B}^L)^{-1}$ is a symmetric positive definite matrix
  and the preconditioned matrix $\mathbf{P}_{AS}^L$ is symmetric and positive definite 
  with respect to the inner-product induced by $ \mathbf{B}^L$. 
  Therefore, Theorem~\ref{thm:ASfractional} leads to $\kappa(\mathbf{P}_{AS}^L) \leq C/c$.
\eremk
 \end{remark}
\begin{remark}
The cost to apply the preconditioner is proportional to $\sum_{\ell=0}^L \operatorname{card} \mathcal{M}_\ell
 = O(N_L)$ by \cite[Sec.~{3.1}]{feischl2017optimal}.
\eremk
\end{remark}
 \subsubsection{A local multilevel diagonal preconditioner using a finest common coarsening mesh hierarchy} 
\label{sec:LMD-fcc}
 In this subsection, we provide a result similar to Theorem~\ref{thm:ASfractional} for the meshes 
 $\widetilde{\mathcal{T}}_\ell=\fcc(\mathcal{T}_L,\widehat\TT_\ell)$, where $\ell=0,\dots, L$.

 With $\widetilde V_\ell^0 = S^{0,0}(\widetilde{\TT}_\ell)$, $\widetilde V_\ell^1 = S^{1,1}_0(\widetilde{\TT}_\ell)$, 
and $\widetilde V_\ell \in \{\widetilde V_\ell^0,\widetilde V_\ell^1\}$ 
 being either the piecewise 
 constants or piecewise linears on $\widetilde{\mathcal{T}}_\ell$, the Galerkin discretization 
 of finding $\widetilde u_\ell \in \widetilde V_\ell$ such that 
  \begin{align}\label{eq:GalerkinDiscretization2}
a(\widetilde u_\ell,\widetilde v_\ell) = \skp{f,\widetilde v_\ell}_{L^2(\Omega)} \qquad 
\forall\; \widetilde v_{\ell} \in  \widetilde V_\ell
 \end{align}
 is equivalent to solving the linear system 
  \begin{align}\label{eq:linearSystem2}
\mathbf{\widetilde A}^{\ell}\widetilde{\mathbf{x}}^\ell=\widetilde{\mathbf{b}}^{\ell} \qquad 
 \end{align}
by choosing a nodal basis as in the previous subsection.
The set of nodes $\widetilde{\mathcal{N}}_\ell^i$, $i=0,1$ and $\widetilde{\mathcal{N}}_\ell$ as well as the 
sets $\widetilde{\mathcal{M}}_\ell^{i}$, $i=0,1$ and $\widetilde{\mathcal{M}}_\ell$ can be defined in 
exactly the same way as in the previous subsection by just replacing the meshes $\TT_\ell$ with $\widetilde\TT_\ell$.
Therefore, we can define the local multilevel diagonal 
preconditioner 
  \begin{align*}
  (\widetilde{\mathbf{B}}^{L})^{-1}:= \sum_{\ell=0}^{L}\mathbf{I} ^{\ell}\; \widetilde{\mathbf{D}}^{\ell}_{\rm inv} ( \mathbf{I} ^{\ell})^T
  \end{align*}
  in exactly the same way as in \eqref{eq:LMDPrec}.

  The following theorem then gives optimal bounds for the smallest and largest eigenvalues of the 
  preconditioned matrix $\widetilde{\mathbf{P}}_{AS}^{L}:= {(\widetilde{\mathbf{B} }^{L})} ^{-1}
  \widetilde{\mathbf{A}}^{L}$.
  
   \begin{theorem}\label{thm:ASfractional2}
The minimal and maximal eigenvalues of the additive Schwarz  matrix $\widetilde{\mathbf{P}}_{AS}^{L}$ are bounded by
 \begin{align}
 c \le \lambda _ {\min}\left( \widetilde {\mathbf{P}}_{AS}^{L} \right)  \qquad \text{and} \qquad \lambda _ {\max}\left( \widetilde {\mathbf{P}}_{AS}^{L} \right) \le C,
 \end{align}
 where the constants $c$, $C>0$ depend only on $\Omega$, $d$, $s$, and the initial triangulation $\TT_0$.
 \end{theorem}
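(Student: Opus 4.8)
The plan is to deduce the statement from the abstract additive Schwarz analysis developed in Section~\ref{sec:ASproof} for Theorem~\ref{thm:ASfractional}, by checking that the mesh hierarchy $\widetilde\TT_\ell=\fcc(\TT_L,\widehat\TT_\ell)$, together with the spaces $\widetilde V_\ell$, the nodes $\widetilde{\mathcal N}_\ell$ and the marking sets $\widetilde{\mathcal M}_\ell$, satisfies the same structural hypotheses that enter that analysis, and by substituting the multilevel norm equivalence for adaptive hierarchies with Theorem~\ref{thm:multilevel-fcc}. Recall that $\widetilde{\mathbf P}_{AS}^L$ represents the additive Schwarz operator for the one‑dimensional subspace splitting $\widetilde V_L=\sum_{\ell=0}^{L}\sum_{z\in\widetilde{\mathcal M}_\ell}\operatorname{span}\{\varphi_z^\ell\}$, each subspace carrying the scalar product induced by the diagonal entry $\widetilde{\mathbf A}^\ell_{zz}=a(\varphi_z^\ell,\varphi_z^\ell)$. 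By the standard theory, $\lambda_{\min}(\widetilde{\mathbf P}_{AS}^L)^{-1}$ equals the smallest constant $C_{\mathrm{stab}}$ such that every $u\in\widetilde V_L$ admits a decomposition $u=\sum_{\ell,z}c_{\ell,z}\varphi_z^\ell$ with $\sum_{\ell,z}\widetilde{\mathbf A}^\ell_{zz}\,|c_{\ell,z}|^2\le C_{\mathrm{stab}}\,a(u,u)$, while $\lambda_{\max}(\widetilde{\mathbf P}_{AS}^L)$ is controlled by a strengthened Cauchy--Schwarz constant among these subspaces.

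For the \textbf{upper eigenvalue bound} I would run the coloring/strengthened Cauchy--Schwarz argument of \cite{faustmann2019quasi}: on a fixed level the patches $\omega_\ell(z)$, $z\in\widetilde{\mathcal M}_\ell$, have bounded overlap by $\gamma$‑shape regularity, and across levels the entries $a(\varphi_z^j,\varphi_{z'}^k)$ decay geometrically in $|j-k|$ by an inverse estimate in fractional Sobolev norms (a slicing/interpolation estimate for the bilinear form $a(\cdot,\cdot)$, as in \cite{faustmann2019quasi}). None of these ingredients refers to how the meshes were generated; they only use that the $\widetilde\TT_\ell$ are uniformly $\gamma$‑shape regular, that $\widetilde V_0\subset\widetilde V_1\subset\cdots\subset\widetilde V_L$ with geometric decay $\widehat h_\ell=\widehat h_0 2^{-\ell}$ of the underlying uniform mesh size, and that neighbouring elements of $\widetilde\TT_\ell$ differ by a bounded number of bisection levels. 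These facts hold for the finest‑common‑coarsening hierarchy by the properties established in Section~\ref{sec:NVBproof} (in particular Lemma~\ref{lemma:fcc}): $\widetilde\TT_{\ell+1}$ refines $\widetilde\TT_\ell$, and locally $\widetilde\TT_\ell$ is either a piece of $\widehat\TT_\ell$ or a coarser NVB mesh piece inherited from $\TT_L$. Hence $\lambda_{\max}(\widetilde{\mathbf P}_{AS}^L)\le C$.

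For the \textbf{lower eigenvalue bound} I would build the stable decomposition from Theorem~\ref{thm:multilevel-fcc}. Consider first the piecewise linear case ($s\in(0,1)$, $\widetilde V_L=S^{1,1}_0(\widetilde\TT_L)$): choose $\theta=2s/3$, so that $B^{3\theta/2}_{2,2}(\Omega)=H^s(\Omega)$, and observe that the adapted operator $\widetilde I^{SZ}_L$ (in a variant preserving homogeneous boundary values) reproduces $u\in\widetilde V_L$. Telescoping gives $u=\widetilde I^{SZ}_0 u+\sum_{\ell\ge 0}Q_\ell u$ with $Q_\ell=\widetilde I^{SZ}_{\ell+1}-\widetilde I^{SZ}_\ell$, and Theorem~\ref{thm:multilevel-fcc} yields
\[
\|\widetilde I^{SZ}_0 u\|_{L^2(\Omega)}^2+\sum_{\ell\ge 0}2^{2s(\ell+1)}\|Q_\ell u\|_{L^2(\Omega)}^2\;\lesssim\;\|u\|_{H^s(\Omega)}^2\;\simeq\;a(u,u),
\]
where the last equivalence uses the equivalence of the energy norm with $\|\cdot\|_{\widetilde H^s(\Omega)}$ together with a control of the boundary weight $\|u/\rho^s\|_{L^2(\Omega)}$ on these meshes, as in \cite{faustmann2019quasi}. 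Each increment $Q_\ell u\in\widetilde V_{\ell+1}$ is, by the locality of the adapted Scott--Zhang operators and the averaging‑element rule of Definition~\ref{def:adapted-SZ}, supported in the region marked by $\widetilde{\mathcal M}_{\ell+1}$; expanding $Q_\ell u$ in the nodal basis indexed by $\widetilde{\mathcal M}_{\ell+1}$, using the local $L^2$‑stability of that expansion (finite overlap) and $\widetilde{\mathbf A}^\ell_{zz}\simeq h_z^{-2s}\|\varphi_z^\ell\|_{L^2(\Omega)}^2$ with $h_z\simeq\widehat h_{\ell}$ on the newly refined region, summation over $\ell$ and $z$ bounds $\sum_{\ell,z}\widetilde{\mathbf A}^\ell_{zz}|c_{\ell,z}|^2$ by the left‑hand side of the displayed estimate. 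This is the required stable splitting, so $\lambda_{\min}(\widetilde{\mathbf P}_{AS}^L)\ge c$. The piecewise constant case ($s\in(0,1/2)$, $\widetilde V_L=S^{0,0}(\widetilde\TT_L)$) is analogous: since $2s<1$ one only needs the non‑endpoint multilevel norm equivalence for discontinuous piecewise constants, which follows by interpolating the $m=1$ endpoint stability of Theorem~\ref{thm:stability-discontinuous} exactly as in the proof of Theorem~\ref{thm:multilevel-fcc}.

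The main obstacle I anticipate is the bookkeeping linking the $\fcc$ hierarchy, the adapted Scott--Zhang operators, and the marking sets $\widetilde{\mathcal M}_\ell$: one must show that $\widetilde I^{SZ}_{\ell+1}u$ and $\widetilde I^{SZ}_\ell u$ coincide outside $\bigcup_{z\in\widetilde{\mathcal M}_{\ell+1}}\omega_{\ell+1}(z)$, so that the increment $Q_\ell u$ genuinely decomposes over $\widetilde{\mathcal M}_{\ell+1}$ with locally $L^2$‑stable coefficients and with the correct mesh‑size scaling. This is where the specific choice of averaging element in Definition~\ref{def:adapted-SZ} (always preferring the coarser mesh $\TT_L$) and the combinatorics of the finest common coarsening from Section~\ref{sec:NVBproof} must be combined; by contrast, the eigenvalue estimates themselves are then routine adaptations of the arguments already carried out for Theorem~\ref{thm:ASfractional}. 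A secondary technical point is the consistent treatment of the $\widetilde H^s$ boundary weight and of the diagonal entries $\widetilde{\mathbf A}^\ell_{zz}$ across levels, for which the scaling arguments of \cite{faustmann2019quasi} apply.
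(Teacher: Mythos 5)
Your overall strategy coincides with the paper's: an abstract additive Schwarz argument in which the lower eigenvalue bound comes from a stable decomposition built by telescoping the adapted Scott--Zhang operators and invoking the multilevel norm equivalence (in its boundary-condition-preserving form, Corollary~\ref{cor:lowerbound}), while the upper bound comes from a strengthened Cauchy--Schwarz inequality based on fractional inverse estimates plus local stability (Proposition~\ref{prop:Equiv}, Lemmas~\ref{lem:stabledec}, \ref{lem:strengthenedCS}, \ref{lem:localstab}). However, the step you flag as ``the main obstacle'' is precisely the missing ingredient, and it is not supplied by Definition~\ref{def:adapted-SZ} as you suggest. That definition only coordinates the averaging elements of $\widetilde I^{SZ}_\ell$ and $\widehat I^{SZ}_\ell$ \emph{within one level}; it leaves the choices on consecutive levels completely uncoordinated, so nothing forces $(\widetilde I^{SZ}_{\ell}-\widetilde I^{SZ}_{\ell-1})u$ to vanish at nodes outside $\widetilde{\mathcal M}_\ell$, and without that the increments need not lie in $\widetilde{\mathcal V}_\ell=\operatorname{span}\{\widetilde\varphi^\ell_z: z\in\widetilde{\mathcal M}_\ell\}$ and the splitting fails to be admissible. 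The paper closes this gap with an additional construction (Lemma~\ref{lemma:Isz-difference}): for $z\in\widetilde{\mathcal N}^1_\ell\setminus\widetilde{\mathcal M}^1_\ell$ one shows, using Lemma~\ref{lemma:chopping}, that $z$ is already a node on level $\ell-1$, that its patches on levels $\ell-1$ and $\ell$ coincide and consist only of elements of $\TT$, and one then \emph{re-uses on level $\ell$ the averaging element chosen on level $\ell-1$}, which yields \eqref{eq:SZzero} for all $u\in L^2(\Omega)$. This inductive cross-level coordination is an extra idea beyond what your proposal contains, and your lower-bound argument does not go through without it (or some substitute).

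Two further points where your sketch would not survive as written. First, for the piecewise constant discretization the upper bound cannot be obtained from the inverse estimate of \cite{faustmann2019quasi} you cite, because that result is for $S^{1,1}_0(\TT)$; for $v\in S^{0,0}(\TT)$ the quantity $\|h^s(-\Delta)^s v\|_{L^2(\Omega)}$ is in general infinite for $s\ge 1/4$, and the paper has to prove new weighted inverse estimates (Lemma~\ref{lem:invest}, with weight exponent $\beta>2s-1/2$) to run the strengthened Cauchy--Schwarz argument; your remark that the piecewise constant case is ``analogous'' only addresses the stable decomposition, not this. Second, the diagonal scaling $\widetilde{\mathbf A}^\ell_{zz}\simeq \widehat h_\ell^{d-2s}$ and the local stability estimate both rest on the fact that the marked region is quasi-uniform of size $\widehat h_\ell$, i.e.\ $h_\ell(z)\simeq\widehat h_\ell$ for $z\in\widetilde{\mathcal M}_\ell$ (Lemma~\ref{lem:Ml}); this is a property of the $\fcc$ hierarchy that needs proof and is assumed implicitly in your expansion of $Q_\ell u$. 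Your route to the $\widetilde H^s$-norm via $\theta=2s/3$ and $B^{3\theta/2}_{2,2}(\Omega)=H^s(\Omega)$ also needs care at and above $s=1/2$, where $H^s(\Omega)$ and $\widetilde H^s(\Omega)$ differ; the paper sidesteps this by interpolating the $(L^2(\Omega),H^1_0(\Omega))$ scale directly (Corollaries~\ref{cor:interpolation-0} and \ref{cor:lowerbound}).
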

 \begin{remark} 
By Lemma~\ref{lemma:chopping} the cost of the preconditioner are, up to a constant,   
$\operatorname{card} \widetilde{\mathcal M}_0 + \sum_{\ell=1}^L \operatorname{card}\widetilde{\mathcal M}_\ell\lesssim 
\operatorname{card} \widetilde{\mathcal M}_0 + \sum_{\ell=0}^L \operatorname{card}\widetilde{\mathcal N}_\ell - \operatorname{card} \widetilde{\mathcal N}_{\ell-1}
\lesssim \operatorname{card} \widetilde{\mathcal N}_L = \operatorname{card} \TT$. 
\eremk
 \end{remark} 
\section{Stability of Scott-Zhang type operators}
\label{sec:SZproof}

We will need mollifiers with certain local approximation properties. Essentially, such operators are given by 
those classical mollifiers that reproduce, or at least approximate to high order, polynomials of degree $p$. 
The following proposition, which is taken from \cite{karkulik-melenk15}, provides such operators. Our primary reason
for working with this particular class of approximation operators is that the technical complications associated
with the boundary of $\partial \Omega$ have been taken care of. 

\begin{proposition}[\protect{\cite[Thm.~{2.3}]{karkulik-melenk15}}]
\label{prop:KM15}
Let $\Omega$ be a bounded Lipschitz domain and $p \in \BbbN_0$ be fixed. Let $\omega \subset \Omega$ be an 
arbitrary open set and denote for $\varepsilon > 0$ by 
$\omega_\varepsilon:= \Omega \cap \cup_{x \in \omega} B_\varepsilon(x)$ the
 ``$\varepsilon$-neighborhood'' of $\omega$. 
Then, there exists a constant $C > 0$ such that 
for every $\varepsilon > 0$ there is a linear operator $\average{\varepsilon}{}:L^1_{loc}(\Omega) \rightarrow 
C^\infty(\overline{\Omega})$ with the stability and approximation properties
\begin{enumerate}[(i)]
\item 
\label{item:prop:KM15-i}
If $u \in H^k(\omega_\varepsilon)$ with $k \leq p+1$, then 
$\|\average{\varepsilon}{u}\|_{H^\ell(\omega)} \leq C \varepsilon^{-\ell + k} \|u\|_{H^k(\omega_\varepsilon)}, 
\qquad \ell = k,\ldots,p+1$. 
\item 
\label{item:prop:KM15-ii}
If $u \in H^k(\omega_\varepsilon)$ with $k \leq p+1$, then 
$\|u - \average{\varepsilon}{u}\|_{H^\ell(\omega)} \leq C \varepsilon^{k-\ell} \|u\|_{H^k(\omega_\varepsilon)}, 
\qquad \ell = 0,\ldots,k$. 
\end{enumerate}
\end{proposition}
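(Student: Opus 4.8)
The statement is Proposition~\ref{prop:KM15}. The plan is to build $\mathcal{A}_\varepsilon$ as a partition-of-unity assembly of \emph{translated mollifiers}: away from $\partial\Omega$ one mollifies at scale $\varepsilon$ with a single fixed smooth kernel that has prescribed vanishing moments; near $\partial\Omega$ one shifts the averaging ball a distance $\sim\varepsilon$ into $\Omega$, in a direction adapted to the local Lipschitz chart, \emph{before} mollifying, so that the averaging stencil never leaves $\Omega$. Only $\varepsilon\to 0$ needs real work: for $\varepsilon_0\le\varepsilon\le 1$ one freezes the construction at the fixed scale $\varepsilon_0$ (the relevant powers of $\varepsilon$ are then bounded above and below on the compact range), and for $\varepsilon\ge 1$ one may take $\mathcal{A}_\varepsilon=0$ (then (i) is trivial and (ii) holds since $\ell\le k$ forces $\varepsilon^{k-\ell}\ge 1$); here $\varepsilon_0>0$ depends only on $\Omega$. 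So I would concentrate on $0<\varepsilon\le\varepsilon_0$.

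For the construction I would first fix a finite cover $\overline\Omega\subset\bigcup_{j=0}^{M}U_j$ with $U_0\Subset\Omega$ and, for $j\ge1$, $\Omega\cap U_j=\{x_d<g_j(x')\}\cap U_j$ after a rigid motion $R_j$, with $g_j$ Lipschitz of constant $L_j$; let $\{\chi_j\}$ be a smooth partition of unity with $\sum_j\chi_j\equiv 1$ on $\overline\Omega$ and $\delta_j:=\dist{\operatorname{supp}\chi_j}{\partial U_j}>0$, and set $\varepsilon_0:=\min_j\delta_j$. Next I would produce a single kernel $\rho\in C_c^\infty(B_r(-\mu e_d))$, with $r$ chosen so that $r(\max_jL_j+2)<1$ and $\mu\in(r(\max_jL_j+1),1-r)$, satisfying $\int_{\R^d}\rho(z)\,z^\alpha\,dz=\delta_{\alpha,\mathbf 0}$ for all $|\alpha|\le p$; such a $\rho=q\psi$ exists, with $\psi\ge 0$ a fixed bump and $q\in P_p$, because the associated moment matrix is the Gram matrix of the monomials in $L^2(\psi\,dz)$ and hence positive definite. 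I would then set $\mathcal{A}_\varepsilon^j u(x):=\varepsilon^{-d}\int_{\R^d}\rho\big(R_j^{\!\top}(y-x)/\varepsilon\big)u(y)\,dy$ for $j\ge1$ (and $\mathcal{A}_\varepsilon^0$ the plain mollifier), and $\mathcal{A}_\varepsilon:=\sum_{j=0}^{M}\chi_j\,\mathcal{A}_\varepsilon^j$. The decisive elementary fact is that for $x\in\operatorname{supp}\chi_j$ and $0<\varepsilon\le\varepsilon_0$ the stencil $\{x+\varepsilon R_jz:z\in\operatorname{supp}\rho\}$ stays in $\Omega\cap U_j$: in chart coordinates a stencil point has $d$-component $\le x_d-\varepsilon(\mu-r)$ and lateral displacement $\le\varepsilon r$, so it stays strictly below the graph since $\mu-r(L_j+1)>0$, and it stays in $U_j$ since its distance to $x$ is $\le\varepsilon(\mu+r)\le\varepsilon\le\delta_j$. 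Hence each $\mathcal{A}_\varepsilon^j u=K_\varepsilon^j*(u\,\mathbf 1_\Omega)$ with $K_\varepsilon^j\in C_c^\infty$, so $\mathcal{A}_\varepsilon$ is linear and maps $L^1_{loc}(\Omega)$ into $C^\infty(\overline\Omega)$, and for $x\in\omega$ the value $(\mathcal{A}_\varepsilon u)(x)$ uses $u$ only on $B_\varepsilon(x)\subset\omega_\varepsilon$.

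Then I would verify the three properties. (a) \emph{Polynomial reproduction:} for $\pi\in P_p$, the substitution $z=R_j^{\top}(y-x)/\varepsilon$ and the exact Taylor expansion of $\pi$ about $x$ turn $\mathcal{A}_\varepsilon^j\pi(x)$ into $\sum_{|\alpha|\le p}\tfrac{1}{\alpha!}D^\alpha\pi(x)\,\varepsilon^{|\alpha|}\int\rho(z)(R_jz)^\alpha\,dz$; since $(R_jz)^\alpha$ is homogeneous of degree $|\alpha|$ in $z$ and the moments of $\rho$ up to order $p$ are $\delta_{\alpha,\mathbf 0}$, only the term $\alpha=\mathbf 0$ survives, so $\mathcal{A}_\varepsilon^j\pi=\pi$ and $\mathcal{A}_\varepsilon\pi=\sum_j\chi_j\pi=\pi$. (b) \emph{Stability (i):} $\|K_\varepsilon^j\|_{L^1}=\|\rho\|_{L^1}$ and $\|\partial^\beta K_\varepsilon^j\|_{L^1}=\varepsilon^{-|\beta|}\|\partial^\beta\rho\|_{L^1}$ are scale invariant, so Young's inequality plus the stencil property give $\|\partial^\beta\mathcal{A}_\varepsilon^j u\|_{L^2(\omega)}\lesssim\varepsilon^{-|\beta|}\|u\|_{L^2(\omega_\varepsilon)}$, i.e.\ the case $k=0$; for $1\le k\le\ell$ I split $\partial_x^\beta=\partial_x^{\beta'}\partial_x^{\beta''}$ with $|\beta''|=k$, use $\partial_{x_i}[\rho(R_j^{\top}(y-x)/\varepsilon)]=-\partial_{y_i}[\rho(R_j^{\top}(y-x)/\varepsilon)]$ and integrate by parts in $y$ (no boundary terms, the stencil being compactly inside $\Omega$) to obtain $\partial_x^\beta\mathcal{A}_\varepsilon^j u=\partial_x^{\beta'}\mathcal{A}_\varepsilon^j(\partial^{\beta''}u)$, and then apply the case $k=0$ to $\partial^{\beta''}u$; the Leibniz terms coming from the $\varepsilon$-independent $\chi_j$ carry fewer factors $\varepsilon^{-1}$ and are absorbed for $\varepsilon\le\varepsilon_0$. (c) \emph{Approximation (ii):} cover $\omega$ by a grid of cubes $Q$ of side $\sim\varepsilon$, let $Q^*$ be the concentric fixed dilate, pick by Bramble--Hilbert $\pi_Q\in P_p$ with $\varepsilon^{-\ell}\|u-\pi_Q\|_{L^2(Q^*\cap\Omega)}+\|u-\pi_Q\|_{H^\ell(Q^*\cap\Omega)}\lesssim\varepsilon^{k-\ell}\|u\|_{H^k(Q^*\cap\Omega)}$, and on $Q\cap\Omega$ use $u-\mathcal{A}_\varepsilon u=(u-\pi_Q)-\mathcal{A}_\varepsilon(u-\pi_Q)$ together with (b) for $k=0$ to get $\|u-\mathcal{A}_\varepsilon u\|_{H^\ell(Q\cap\Omega)}\lesssim\varepsilon^{k-\ell}\|u\|_{H^k(Q^*\cap\Omega)}$; summing squares over the finitely overlapping cubes yields (ii), first with $\omega_{C\varepsilon}$ on the right, then with $\omega_\varepsilon$ after replacing $\varepsilon$ by $\varepsilon/C$ in the operator family (a relabeling absorbed into $\varepsilon_0$ and $C$).

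The main obstacle is the boundary. The subtle point is that a naive ``shift by $v$, then mollify'' reproduces $\pi(\cdot+v)$ rather than $\pi$ and therefore destroys the high-order approximation; the remedy is to keep the vanishing moments of the kernel \emph{centred at the evaluation point} while placing its support off-centre in an inward cone direction, which is exactly what forces the Taylor-expansion argument in (a) to collapse. Coupled to this is the (routine but bookkeeping-heavy) Lipschitz-chart geometry ensuring, uniformly for $\varepsilon\le\varepsilon_0$, that the shifted stencils remain inside $\Omega$, and the partition-of-unity gluing, which survives only because $\sum_j\chi_j\equiv 1$ on $\overline\Omega$ and the $\chi_j$-derivatives are $\varepsilon$-independent. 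Everything else — invertibility of the moment matrix, Young's inequality, the $\partial_x=-\partial_y$ integration-by-parts trick, the Bramble--Hilbert step, and the $\omega_\varepsilon$-versus-$\omega_{C\varepsilon}$ rescaling — is standard.
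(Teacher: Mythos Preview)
Your argument is correct and considerably more detailed than what the paper offers, which is essentially a citation to \cite{karkulik-melenk15} together with a four-line sketch. The two constructions differ in how the boundary is handled: the paper (following \cite{karkulik-melenk15}) mollifies on a \emph{shifted} ball $B_\varepsilon(x+\varepsilon b)$ with a standard kernel and then restores polynomial reproduction by evaluating the degree-$p$ Taylor polynomial of the mollified function at the original point $x$; you instead design a single kernel $\rho$ with \emph{off-centre support} but moments $\int\rho(z)z^\alpha\,dz=\delta_{\alpha,\mathbf 0}$ up to order $p$, so that polynomial reproduction is built into the convolution itself. Both devices solve the same obstacle you correctly identify---that a naive shift reproduces $\pi(\cdot+v)$ rather than $\pi$---and are in fact closely related (expanding the paper's Taylor-corrected operator yields an integral operator whose effective kernel also has off-centre support and the right moments). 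Your route has the advantage of being a pure convolution at each chart, which makes the stability step via Young's inequality and the $\partial_x=-\partial_y$ integration by parts very clean; the Taylor-correction route has the advantage that one can use any radially symmetric bump (no moment engineering), at the price of carrying the Taylor sum through the estimates. The freezing for $\varepsilon\in[\varepsilon_0,1]$, the choice $\mathcal A_\varepsilon=0$ for $\varepsilon\ge 1$, the Leibniz bookkeeping for the partition-of-unity derivatives, and the Bramble--Hilbert-on-cubes argument for (ii) with the final $\omega_{C\varepsilon}\to\omega_\varepsilon$ rescaling are all handled correctly.
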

\begin{proof}
The proof for the much more technical case of a {\em variable} length scale function $\varepsilon = \varepsilon(x)$ 
is given in \cite[Thm.~{2.3}]{karkulik-melenk15}. We give the idea of the proof: in the interior of 
$\Omega$, the operator $\average{\varepsilon}{}$ has the form $\average{\varepsilon}{u} = u \ast \rho_\varepsilon$,
where the mollifier $\rho_\varepsilon$ is such that it reproduces polynomials of degree $p$ (the ``classical''
mollifier reproduces merely constant functions). Near the boundary, this standard averaging is modified such 
that $\average{\varepsilon}{u}(x)$ is not obtained by averaging $u$ on $B_{\varepsilon}(x)$ but by 
averaging $u$ on the ball $B_{\varepsilon}(x + \varepsilon b)$ and evaluating the Taylor polynomial
of degree $p$ of this averaged function at the point $x$ of interest; the vector $b$ is suitable of size $O(1)$
and it ensures that the averaging is performed inside $\Omega$. 
\end{proof}

With the mollifiers from Proposition~\ref{prop:KM15}, we can prove stability and approximation
properties for operators satisfying Assumption~\ref{assumption:I-discontinuous} in stronger norms.
\begin{lemma}
\label{lemma:approximation}
Let $m \in \{1,2\}$ and $p \ge m-1$. Assume that the linear operator 
$\II:H^m(\Omega) \rightarrow S^{p,m-1}(\TT)$ satisfies 
Assumption~\ref{assumption:I-discontinuous}. Then, for all $T \in \TT$ the stability
\begin{equation}
\label{eq:discontinuous-H1-stability}
\|\II u\|_{H^j(T)} \leq C \|u\|_{H^j(\omega^2 (T))},
\qquad j=0,\ldots,m
\end{equation} 
and approximation property
\begin{equation}
\label{eq:lemma:approximation-10}
\|u - \II u\|_{H^r(T)} \leq C h_T^{k-r}\|u\|_{H^k(\omega^2(T))}, 
\qquad r=0,\ldots,\min\{k,m\}
\end{equation}
hold, where the constants $C>0$ depend solely on $d$, $m$, $p$, and the $\gamma$-shape-regularity of $\TT$. 
\end{lemma}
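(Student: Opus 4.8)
\emph{Proof plan.}
The plan is to reduce everything to the abstract properties of $\II$ in Assumption~\ref{assumption:I-discontinuous} by comparing $\II u$ with a smoothed version $\average{\varepsilon_T}{u}$ of $u$, where $\average{\varepsilon}{}$ is the boundary-adapted mollifier of Proposition~\ref{prop:KM15} and the length scale is chosen element-dependently as $\varepsilon_T := c\,h_T$. Here $c>0$ is fixed once and for all, depending only on $d$ and the shape-regularity constant $\gamma$, small enough that the $\varepsilon_T$-neighbourhood of $T$ lies in $\omega^1(T)$ and that of $\omega^1(T)$ in $\omega^2(T)$; this is admissible because, $\TT$ being face-conforming and $\gamma$-shape-regular, all elements meeting $\overline T$ have diameter comparable to $h_T$. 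Apart from Proposition~\ref{prop:KM15}, only three elementary facts are needed: the inverse/scaling inequality $\|q\|_{H^i(T)}\lesssim h_T^{-i}\|q\|_{L^2(T)}$ for $q\in P_p(T)$; that $(\II w)|_T\in P_p(T)$ depends only on $w|_{\omega^1(T)}$ by Assumption~\ref{assumption:I-discontinuous}(i); and the Bramble--Hilbert lemma on the simplex $T$.

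The argument then hinges on the identity
\begin{equation*}
  w - \II w \;=\; \bigl(w - \average{\varepsilon_T}{w}\bigr) \;+\; \bigl(\average{\varepsilon_T}{w} - \II\average{\varepsilon_T}{w}\bigr) \;+\; \II\bigl(\average{\varepsilon_T}{w} - w\bigr)
\end{equation*}
taken with $w=u$; for the stability estimate \eqref{eq:discontinuous-H1-stability} one uses instead the equivalent rearrangement $\II w = \average{\varepsilon_T}{w} + \II(w-\average{\varepsilon_T}{w}) - (\average{\varepsilon_T}{w} - \II\average{\varepsilon_T}{w})$. In either case one bounds the $H^i(T)$-norm of each of the three terms ($i=j$ for \eqref{eq:discontinuous-H1-stability}, $i=r$ for \eqref{eq:lemma:approximation-10}) and adds up. Two of the three are immediate. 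The pure mollifier term $\average{\varepsilon_T}{u}$ is controlled directly by Proposition~\ref{prop:KM15}: part~(i) bounds $\|\average{\varepsilon_T}{u}\|_{H^j(T)}$ by $\|u\|_{H^j(\omega^2(T))}$, and part~(ii) bounds $\|u-\average{\varepsilon_T}{u}\|_{H^r(T)}$ by $\varepsilon_T^{\,k-r}\|u\|_{H^k(\omega^2(T))}\simeq h_T^{\,k-r}\|u\|_{H^k(\omega^2(T))}$. The term $\II(\average{\varepsilon_T}{u}-u)$ is a polynomial on $T$, so one first passes from $\|\cdot\|_{H^i(T)}$ to $h_T^{-i}\|\cdot\|_{L^2(T)}$ by the inverse inequality, then to $h_T^{-i}\|\average{\varepsilon_T}{u}-u\|_{L^2(\omega^1(T))}$ by the $L^2$-stability Assumption~\ref{assumption:I-discontinuous}(ii), and finally to $h_T^{-i}\varepsilon_T^{\,k}\|u\|_{H^k(\omega^2(T))}$ by Proposition~\ref{prop:KM15}(ii); the hypothesis $p\ge m-1$ (hence $p+1\ge m$) guarantees that every smoothness index invoked in Proposition~\ref{prop:KM15} stays within its admissible window $\{0,\dots,p+1\}$.

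The one genuinely delicate term is $\average{\varepsilon_T}{u} - \II\average{\varepsilon_T}{u}$: for $i\ge 1$ it is \emph{not} a polynomial on $T$, so the inverse inequality cannot be applied to it directly. I would circumvent this by inserting a polynomial, namely an averaged Taylor (Bramble--Hilbert) approximation $\pi_T\in P_p(T)$ of the smooth function $\average{\varepsilon_T}{u}$, and splitting $\average{\varepsilon_T}{u} - \II\average{\varepsilon_T}{u} = (\average{\varepsilon_T}{u}-\pi_T) + (\pi_T - \II\average{\varepsilon_T}{u})$. The first summand is handled by the Bramble--Hilbert lemma together with the mollifier stability Proposition~\ref{prop:KM15}(i); the second is a polynomial on $T$, so its $H^i(T)$-norm is $\lesssim h_T^{-i}\bigl(\|\pi_T-\average{\varepsilon_T}{u}\|_{L^2(T)} + \|\average{\varepsilon_T}{u}-\II\average{\varepsilon_T}{u}\|_{L^2(T)}\bigr)$, where the first term is once more Bramble--Hilbert and the second is precisely the $L^2$-approximation Assumption~\ref{assumption:I-discontinuous}(iii) applied to $\average{\varepsilon_T}{u}\in H^m(\omega^1(T))$, i.e.\ $\lesssim h_T^{m}\|\average{\varepsilon_T}{u}\|_{H^m(\omega^1(T))}$; a final application of Proposition~\ref{prop:KM15}(i) turns $\|\average{\varepsilon_T}{u}\|_{H^m(\omega^1(T))}$ into $\varepsilon_T^{\,k-m}\|u\|_{H^k(\omega^2(T))}$ (respectively $\varepsilon_T^{\,j-m}\|u\|_{H^j(\omega^2(T))}$ in the stability case). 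This last step is exactly where the index restrictions of the lemma originate: Proposition~\ref{prop:KM15}(i) permits only a target smoothness index that is at least the source index and at most $p+1$, and Assumption~\ref{assumption:I-discontinuous}(iii) furnishes approximation of order only $m$; together these force $j\le m$ in \eqref{eq:discontinuous-H1-stability} and $r\le\min\{k,m\}$ in \eqref{eq:lemma:approximation-10}. I would conclude by collecting the three bounds: with $\varepsilon_T\simeq h_T$ all powers of $\varepsilon_T$ and $h_T$ recombine into $h_T^{\,0}$ for \eqref{eq:discontinuous-H1-stability} and into $h_T^{\,k-r}$ for \eqref{eq:lemma:approximation-10}, and every localisation has been arranged to sit inside $\omega^2(T)$ by the choice of $c$. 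I expect the main obstacle to be precisely this non-polynomial term $\average{\varepsilon_T}{u} - \II\average{\varepsilon_T}{u}$, together with the bookkeeping of which smoothness indices are admissible in Proposition~\ref{prop:KM15} and the verification that $\varepsilon_T\simeq h_T$ suffices to keep all supports within $\omega^2(T)$.
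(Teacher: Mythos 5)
Your proposal is correct and follows essentially the same route as the paper's proof: comparison with the mollifier $\average{\varepsilon}{}$ of Proposition~\ref{prop:KM15} at scale $\varepsilon\sim h_T$, the same three-term splitting, the third term via inverse estimate plus $L^2$-stability, and the middle term resolved by inserting a polynomial, applying the inverse estimate, and using Assumption~\ref{assumption:I-discontinuous}(iii) on the mollified function followed by Proposition~\ref{prop:KM15}(i). The only minor deviation is that you insert an element-local averaged Taylor (Bramble--Hilbert) polynomial of $\average{\varepsilon_T}{u}$, whereas the paper inserts a global Cl\'ement/Scott--Zhang interpolant $q\in S^{p,m-1}(\TT)$ of $u$ (via \cite[Thm.~{4.8.12}]{brenner-scott02}); both serve the identical purpose and yield the same bounds.
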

\begin{proof}
Let $T \in \TT$ be arbitrary.
We use the operator ${\mathcal A}_\varepsilon$ of Proposition~\ref{prop:KM15}
with $\omega=\omega(T)$ and $\varepsilon \sim h_T$, such that $\omega_\varepsilon \subset \omega^2(T)$. 
We write using the triangle inequality
\begin{align*}
\|u - \II u\|_{H^r(T)} &\leq 
\|u - {\mathcal A}_\varepsilon u\|_{H^r(T)} +  
\|{\mathcal A}_\varepsilon u - \II {\mathcal A}_\varepsilon u\|_{H^r(T)} +  
\|\II( u - {\mathcal A}_\varepsilon u)\|_{H^r(T)}  \\
&=:T_1 + T_2 + T_3. 
\end{align*}
By Proposition~\ref{prop:KM15} we have 
$T_1 \lesssim h_T^{k-r} \|u\|_{H^k(\omega^2(T))}$. 
A polynomial inverse estimate, see, e.g., \cite{dahmen2004inverse}, the stability property 
\eqref{item:assumption:I-discontinuous-ii} of Assumption~\ref{assumption:I-discontinuous},
and Proposition~\ref{prop:KM15} give 
$$
T_3 \lesssim h_T^{-r} \|u - {\mathcal A}_\varepsilon u\|_{L^2(\omega(T))} 
\lesssim h_T^{-r} h_T^k \|u\|_{H^k(\omega^2(T))}. 
$$
In order to estimate $T_2$, we use a piecewise polynomial $q \in S^{p,m-1}(\TT) $ with approximation 
properties in the $H^r$-norm (e.g., a Cl{\'e}ment or Scott-Zhang type interpolation)
as given by \cite[Thm.~{4.8.12}]{brenner-scott02}.
Then, 
\begin{align*}
T_2 &\leq \|{\mathcal A}_\varepsilon u - u\|_{H^r(T)} + 
          \|u - q \|_{H^r(T)} + 
          \|\II {\mathcal A}_\varepsilon u - q\|_{H^r(T)}  =: T_{2,1} + T_{2,2} + T_{2,3}. 
\end{align*}
We already have estimated $T_{2,1} = T_1$. 
By \cite[Thm.~{4.8.12}]{brenner-scott02}
(and inspection of the procedure there), we obtain $T_{2,2} \lesssim h_T^{s-r} \|u\|_{H^s(\omega^2(T))}$. 
Finally, for $T_{2,3}$ we use an inverse estimate
$$
T_{2,3} \lesssim h_T^{-r} \|\II {\mathcal A}_\varepsilon u - q\|_{L^2(T)}
\lesssim 
h_T^{-r} \left[ \|\II {\mathcal A}_\varepsilon u - {\mathcal A}_\varepsilon u\|_{L^2(T)} + \|{\mathcal A}_\varepsilon u - u\|_{L^2(T)} + 
\|u  - q\|_{L^2(T)} \right].
$$
The last two terms have the desired form due to Proposition~\ref{prop:KM15} and \cite[Thm.~{4.8.12}]{brenner-scott02}.
For the remaining term, we write with Assumption~\ref{assumption:I-discontinuous} \eqref{item:assumption:I-discontinuous-iii}
and Proposition~\ref{prop:KM15}
\begin{align*}
\|\II {\mathcal A}_\varepsilon u - {\mathcal A}_\varepsilon u\|_{L^2(T)} 
\lesssim h_T^m \|{\mathcal A}_\varepsilon u\|_{H^m(\omega(T))}
\lesssim h_T^{m} h_T^{s-m}\|u\|_{H^s(\omega^2(T))}.
\end{align*}
Finally, 
(\ref{eq:discontinuous-H1-stability}) follows from 
(\ref{eq:lemma:approximation-10}) by selecting $r = s$. 	
\end{proof}

The generalization of Proposition~\ref{prop:KM15} to the case of variable length scale functions 
from \cite[Thm.~{2.3}]{karkulik-melenk15} can also be used to derive a smooth operator with approximation 
and stability properties for $h$-weighted and fractional norms.

\begin{corollary}
\label{cor:H12-estimate-scott-zhang}
With the mesh size function $h$ of $\TT$ and $t>0$, we define the function $\overline{h}:= \max\{t,h\}$.
Let $m$, $n \in \BbbN_0$ be fixed and $u \in H^m(\Omega)$.
Then, for every $t > 0$ there exists 
a linear operator $J_t:L^2(\Omega) \rightarrow C^\infty(\overline\Omega)$ with the stability  
\begin{align}
\label{eq:cor:H12-estimate-scott-zhang-10}
\|\overline{h}^{n} \nabla^{m+n} J_t u\|_{L^2(\Omega)} &\leq C_{m,n} \|u\|_{H^m(\Omega)} 
\end{align}
and approximation
properties 
\begin{align}
\label{eq:cor:H12-estimate-scott-zhang-15}
\sum_{j=0}^m \|\overline{h}^{-(j-m)} \nabla^j (u - J_t u)\|_{L^2(\Omega)} &\leq C_{m} \|u\|_{H^m(\Omega)}. 
\end{align}
In particular, interpolation arguments give
\begin{align}
\label{eq:cor:H12-estimate-scott-zhang-20}
\|\overline{h}^{1/2} \nabla J_t u\|_{L^2(\Omega)} + \|\overline{h}^{-1/2}( u - J_tu)\|_{L^{2}(\Omega)} & \leq C \|u\|_{H^{1/2}(\Omega)},  \\
\label{eq:cor:H12-estimate-scott-zhang-25}
\|\overline{h}^{1/2} \nabla^2 J_t u\|_{L^2(\Omega)} +  
\|\overline{h}^{-3/2}( u - J_tu)\|_{L^{2}(\Omega)}  + 
\|\overline{h}^{-1/2}\nabla ( u - J_tu)\|_{L^{2}(\Omega)} 
& \leq C \|u\|_{H^{3/2}(\Omega)}. 
\end{align}
The constants $C_{m,n}$ and $C_m$ depend on $m$ and $n$ as indicated, 
as well as on $\Omega$ and the $\gamma$-shape regularity of $\TT$. 
The constant $C$ depends only on $\Omega$ and the $\gamma$-shape regularity of $\TT$. 
\end{corollary}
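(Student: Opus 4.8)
The plan is to produce $J_t$ directly from the \emph{variable}-length-scale version of the mollifier of Proposition~\ref{prop:KM15} (the one referred to in the proof of that proposition, \cite[Thm.~{2.3}]{karkulik-melenk15}), applied with a length scale $\varepsilon(x)\simeq\overline h(x)$, and then to obtain the fractional estimates \eqref{eq:cor:H12-estimate-scott-zhang-20}--\eqref{eq:cor:H12-estimate-scott-zhang-25} by interpolating the integer-order ones. The first step is to replace the piecewise constant function $\overline h$, which is not admissible as a length scale for \cite[Thm.~{2.3}]{karkulik-melenk15} (that construction needs a Lipschitz scale with \emph{small} Lipschitz constant), by an equivalent Lipschitz surrogate. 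On a $\gamma$-shape-regular mesh adjacent elements have comparable diameters, so the $S^{1,1}(\TT)$-function $h_{\rm reg}$ whose value at a vertex $z$ is, say, $\min\{h_T\colon z\in\overline T\}$ satisfies $h_{\rm reg}\simeq h$ and $\|\nabla h_{\rm reg}\|_{L^\infty(\Omega)}\le C_\gamma$; hence $\overline h_{\rm reg}:=\max\{t,h_{\rm reg}\}$ is Lipschitz with $\|\nabla\overline h_{\rm reg}\|_{L^\infty(\Omega)}\le C_\gamma$ and $\overline h_{\rm reg}\simeq\overline h$, all constants depending only on $d$ and $\gamma$. We then fix $p$ large enough to cover all derivative orders appearing below (e.g.\ $p\ge m+n$), pick $c=c(d,\gamma)>0$ small enough that $\varepsilon:=c\,\overline h_{\rm reg}$ meets the smallness hypothesis of \cite[Thm.~{2.3}]{karkulik-melenk15}, and set $J_t u:=\mathcal A_\varepsilon u$; this $J_t:L^2(\Omega)\to C^\infty(\overline\Omega)$ is linear.

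With this choice, the variable-scale analogues of the estimates \eqref{item:prop:KM15-i}--\eqref{item:prop:KM15-ii} in \cite[Thm.~{2.3}]{karkulik-melenk15} read, for $u\in H^m(\Omega)$ with $m\le p+1$: $\|\varepsilon^{\,\ell-m}\nabla^\ell\mathcal A_\varepsilon u\|_{L^2(\Omega)}\lesssim\|u\|_{H^m(\Omega)}$ for $\ell=m,\dots,p+1$, and $\|\varepsilon^{\,j-m}\nabla^j(u-\mathcal A_\varepsilon u)\|_{L^2(\Omega)}\lesssim\|u\|_{H^m(\Omega)}$ for $j=0,\dots,m$, with the powers of $\varepsilon$ now sitting pointwise inside the $L^2$-norm. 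Since $\varepsilon\simeq\overline h$, taking $\ell=m+n$ in the first family yields \eqref{eq:cor:H12-estimate-scott-zhang-10}, and the second family yields \eqref{eq:cor:H12-estimate-scott-zhang-15}.

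For the fractional estimates, observe that for $F\in\{J_t,\,I-J_t\}$ a weighted bound $\|\overline h^{\,\alpha}\nabla^k Fu\|_{L^2(\Omega)}\lesssim\|u\|_{H^m(\Omega)}$ is precisely the statement that the linear map $u\mapsto\nabla^k Fu$ is bounded from $H^m(\Omega)$ into $L^2(\overline h^{\,2\alpha}\,dx)$. Interpolating at $\theta=1/2$ the corresponding $m=0$ and $m=1$ estimates (resp.\ $m=1$ and $m=2$), and using $(H^0,H^1)_{1/2,2}=H^{1/2}(\Omega)$, $(H^1,H^2)_{1/2,2}=H^{3/2}(\Omega)$ together with the elementary identity $\bigl(L^2(w_0\,dx),L^2(w_1\,dx)\bigr)_{1/2,2}=L^2\bigl((w_0w_1)^{1/2}\,dx\bigr)$, whose norm-equivalence constants do not depend on the weights $w_0,w_1$ (hence are $t$-uniform), gives \eqref{eq:cor:H12-estimate-scott-zhang-20} and \eqref{eq:cor:H12-estimate-scott-zhang-25}: the geometric mean of the two endpoint weights produces exactly the half-integer powers of $\overline h$ appearing there.

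The only genuinely delicate point, namely upgrading the fixed-scale mollifier to a variable-scale one that still behaves correctly near $\partial\Omega$, is already supplied by \cite[Thm.~{2.3}]{karkulik-melenk15}; here it reduces to checking that $\varepsilon=c\,\overline h_{\rm reg}$ satisfies that theorem's hypotheses, which is why the regularization of $h$ and the small factor $c$ were inserted. The remaining care is the $t$-uniformity of the weighted-$L^2$ interpolation, and this is immediate since real interpolation of $L^2(w_0)$ and $L^2(w_1)$ reproduces $L^2((w_0w_1)^{1/2})$ up to a universal constant.
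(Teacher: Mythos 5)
Your overall strategy is the paper's: apply the variable-length-scale mollifier of \cite[Thm.~{2.3}]{karkulik-melenk15} with a scale $\varepsilon\simeq\overline h=\max\{t,h\}$ to define $J_t$, read off \eqref{eq:cor:H12-estimate-scott-zhang-10}--\eqref{eq:cor:H12-estimate-scott-zhang-15} from the weighted stability/approximation estimates, and obtain \eqref{eq:cor:H12-estimate-scott-zhang-20}--\eqref{eq:cor:H12-estimate-scott-zhang-25} by interpolating between consecutive integer orders; your interpolation step (boundedness of $u\mapsto\nabla^k Fu$ into weighted $L^2$, plus $(L^2(w_0),L^2(w_1))_{1/2,2}=L^2((w_0w_1)^{1/2})$ with weight-independent constants) is correct and is what the paper does in its step~4 via \cite[Lemma~{23.1}]{tartar07}. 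The gap is in the admissibility of your length-scale function. You take $\varepsilon=c\,\max\{t,h_{\mathrm{reg}}\}$ with $h_{\mathrm{reg}}$ piecewise affine, i.e.\ a merely Lipschitz function with kinks, and assert that verifying the hypotheses of \cite[Thm.~{2.3}]{karkulik-melenk15} reduces to a smallness condition on the Lipschitz constant, fixable by the factor $c$. That is not what the cited result requires: the admissible length scale functions of \cite[Def.~{2.1}]{karkulik-melenk15} are smooth, slowly varying functions whose derivatives of all relevant orders are controlled in terms of $\varepsilon$ itself (bounds of the type $|\nabla^j\varepsilon|\lesssim\varepsilon^{1-j}$). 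This is not a formality: $\average{\varepsilon}{u}$ averages at the variable radius $\varepsilon(x)$ (shifted into $\Omega$ near $\partial\Omega$), so already the assertion $J_tu\in C^\infty(\overline\Omega)$ and, more importantly, the higher-order stability \eqref{eq:cor:H12-estimate-scott-zhang-10} with $m+n$ derivatives require, via the chain rule, bounds on $\nabla^j\varepsilon$ up to order $m+n$. A piecewise-affine $\varepsilon$ provides none of these, and multiplying by a small constant $c$ does not create smoothness.

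Producing an admissible $\varepsilon\simeq\overline h$ is precisely the nontrivial content of the paper's proof: it mollifies $h$ into a smooth $\widetilde h\simeq h$ (\cite[Lemma~{3.1}]{karkulik-melenk15}), sets $\widetilde\varepsilon=\widetilde h+t$, and then --- because $\widetilde h+t$ fails the higher-order bounds precisely in the regions where $t\gg h$ (there one only knows $|\nabla^j\widetilde h|\lesssim\widetilde h^{1-j}$, which is much weaker than the needed $(\widetilde h+t)^{1-j}$) --- re-smooths at the scale $\widetilde\varepsilon$ via the covering of \cite[Lemma~{5.7}]{karkulik-melenk15}, yielding the function \eqref{eq:length-scale-fct} whose derivatives can then be checked. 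Your proposal omits this construction, so \eqref{eq:cor:H12-estimate-scott-zhang-10}--\eqref{eq:cor:H12-estimate-scott-zhang-15} are not justified as stated; any repair of your Lipschitz surrogate (re-mollification at scale $\simeq\overline h$) essentially reproduces the paper's step~2. A smaller omission: the regime $t\gtrsim\operatorname{diam}\Omega$ needs separate (trivial) treatment, e.g.\ $J_t:=0$ as in the paper's step~1, since the shifted averaging balls of radius $\sim\varepsilon$ must fit inside $\Omega$.
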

\begin{proof}
\emph{1.~step:} For $t \ge \operatorname{diam} \Omega$, one may 
select $J_t  = 0$. 

\emph{2.~step:} For $t \leq \operatorname{diam}\Omega$
one constructs a length scale function $\varepsilon$ 
with $\varepsilon \sim \overline{h}$ in the following way: 
First, by mollification of  the piecewise constant function $h$ 
(see \cite[Lemma~{3.1}]{karkulik-melenk15} for details), one obtains
a function $\widetilde h \in C^\infty(\overline{\Omega})$ whose Lipschitz constant
${\mathcal L}$ depends solely on the $\gamma$-shape regularity of $\TT$ and $\Omega$. 
Next, one defines the auxiliary length scale function 
$\widetilde  \varepsilon(x):= \widetilde h(x) + t$. We note that 
the Lipschitz constant of $\widetilde \varepsilon$ is still ${\mathcal L}$. 
{}From \cite[Lemma~{5.7}]{karkulik-melenk15}, there are parameters 
$0 < \alpha < \beta$ (depending on ${\mathcal L}$) and $N_d \in \BbbN$
(depending only on the spatial dimension $d$) as well as 
closed balls $B_{ij}:= \overline{B}_{\alpha \widetilde \varepsilon(x_{ij})}(x_{ij})$, 
$i=1,\ldots,N_d$, $j \in \BbbN$ such that the following holds: 
\begin{enumerate}[(a)]
\item 
\label{item:overlap-a}
$\Omega \subset \cup_{i=1}^{N_d} \cup_{j\in\BbbN} B_{ij};$
\item 
\label{item:overlap-b}
There is a constant $C_{\rm big} > 0$, such that for each 
$i \in \{1,\ldots,N_d\}$ the stretched balls \linebreak
$\widehat B_{ij}:= \overline{B}_{\beta \widetilde \varepsilon(x_{ij})}(x_{ij})$ satisfy an overlap condition: 
$\# \{ j'\,|\, \widehat {B}_{ij'} \cap \widehat {B}_{ij} \ne \emptyset\} \leq C_{\rm big}$ for all $j \in \BbbN$. 
\item 
\label{item:overlap-c}
For pairs $(i,j)$ and $(i',j')$ with 
$\widehat B_{ij} \cap \widehat B_{i'j'} \ne \emptyset$  there holds 
$\widetilde \varepsilon(x_{ij}) \sim \widetilde \varepsilon(x_{i'j'})$ with 
implied constant depending solely on ${\mathcal L}$ and $\beta$. 
This implies {\sl a fortiori} that for 
pairs $(i,j)$ and $(i',j')$ with 
$B_{ij} \cap B_{i'j'} \ne \emptyset$  there holds 
$\widetilde \varepsilon(x_{ij}) \sim \widetilde \varepsilon(x_{i'j'})$ with 
implied constant depending solely on ${\mathcal L}$ and $\beta$
(which follows by inspection of the proof of 
\cite[Lemma~{5.7}]{karkulik-melenk15}).
\end{enumerate}
Denoting by $\chi_A$ the characteristic function of the set $A$, we define
the desired length scale function $\varepsilon$ as 
\begin{equation}
\label{eq:length-scale-fct}
\varepsilon:= 
\sum_{i=1}^{N_d} \sum_{j \in \BbbN} \widetilde \varepsilon(x_{ij}) (\chi_{B_{ij}} 
\ast \rho_{(\beta-\alpha)\widetilde \varepsilon(x_{ij})}), 
\end{equation}
where $\rho_\delta$ is a standard non-negative mollifier supported by 
$B_\delta(0)$. Let $x \in \Omega$. Due to \eqref{item:overlap-a} there is $(i,j)$ with 
$x \in B_{ij}$. The non-negativity of the mollifier $\rho_\delta$
gives $\varepsilon(x) \gtrsim \widetilde \varepsilon(x_{ij})$. 
Furthermore, (\ref{item:overlap-b}), (\ref{item:overlap-c}) we imply 
that the sum (\ref{eq:length-scale-fct}) is locally finite (with 
at most $N_d C_{\rm big}$ non-zero terms). In view of (\ref{item:overlap-c})
we get $\varepsilon(x) \lesssim \varepsilon(x_{ij})$. By studying 
derivatives of $\varepsilon$, we recognize that it is a length scale
function in the sense of \cite[Def.~{2.1}]{karkulik-melenk15}. 

\emph{3.~step:} The upshot of \cite[Lemma~{5.7}]{karkulik-melenk15} 
is that, once a length scale function $\varepsilon$ is available, then 
a covering argument can be employed. That is, the operator 
${\mathcal A}_\varepsilon$ of \cite[Thm.~{2.3}]{karkulik-melenk15} 
yields 
\begin{align*}
\sum_{j=0}^{m} \|\varepsilon^{m-j} \nabla^j (u - {\mathcal A}_\varepsilon u)\|_{L^2(\Omega)} & \lesssim \|u\|_{H^m(\Omega)}, \\
\|\varepsilon^n \nabla^{m+n}  {\mathcal A}_\varepsilon u\|_{L^2(\Omega)} &\lesssim 
\|u\|_{H^{m}(\Omega)}, 
\end{align*}
which proves (\ref{eq:cor:H12-estimate-scott-zhang-10}) and (\ref{eq:cor:H12-estimate-scott-zhang-15})
since $\varepsilon \sim \overline{h}$.

\emph{4.~step:}
Interpolation between the inequalities for $m=0$ and $m=1$ using \cite[Lemma~{23.1}]{tartar07} then gives the 
estimate (\ref{eq:cor:H12-estimate-scott-zhang-20}), 
and interpolation between $m=1$ and $m=2$ (\ref{eq:cor:H12-estimate-scott-zhang-25}).
\end{proof}

\begin{remark}
If the shape-regular mesh $\TT$ is obtained by repeated NVB from a coarse grid
$\TT_0$, then a simpler proof is possible: one may then construct 
a quasi-uniform mesh of mesh size $\sim t$ and consider 
$\widetilde \TT:= \fcc(\TT,\TT_t)$. Then, $J_t$ can be taken as a mollifier
of the standard Scott-Zhang operator associated with $\widetilde \TT$. 
\eremk
\end{remark}
\subsection{Proof of Theorem~\ref{thm:stability-discontinuous}}
\label{sec:proofSZ}

\begin{proof}[Proof of Theorem~\ref{thm:stability-discontinuous}]
The function $\II u$ is piecewise smooth on a finite mesh. Hence, it is an element of 
$B^{m-1/2}_{2,\infty}(\Omega)$,  so that only the 
stability estimate has to be proved. This is achieved by constructing an element 
$u_t:={\mathcal A}_{\delta t}(\II u)$ for an appropriate $\delta>0$ such that the 
$K$-functional can be estimated by the $H^{m-1/2}$-norm of $u$. We have
\begin{align}\label{eq:KfunctionalEstimate}
\norm{\II u}_{B_{2,\infty}^{m-1/2}(\Omega)} &= \sup_{t>0} t^{-1/2} K(t,\II u) \nonumber \\ &\lesssim 
\sup_{t>0} t^{-1/2} \left( \norm{\II u - {\mathcal A}_{\delta t}(\II u)}_{H^{m-1}(\Omega)} + 
t \norm{{\mathcal A}_{\delta t}(\II u)}_{H^m(\Omega)}\right).
\end{align}
With the operator $J_t$ from Corollary~\ref{cor:H12-estimate-scott-zhang}, we further decompose 
$u = (u - J_tu) + J_tu =: u_0 + u_1$ into an element of $H^{m-1}(\Omega)$ and one in $H^m(\Omega)$. 
By the triangle inequality, we have 
to control the right-hand side of \eqref{eq:KfunctionalEstimate} for both contributions separately.

{\em 1.~step:} For fixed $t>0$, we split the mesh into elements of size smaller and larger than $t$, 
\begin{align*}
{\mathcal T}_{\leq t} := \{T \in {\mathcal T} \colon \diam T \leq t\}, 
\qquad 
{\mathcal T}_{> t} := \{T \in {\mathcal T} \colon \diam T > t\}, 
\end{align*}
and define the regions covered by these elements by 
\begin{align}
\label{eq:thm:stability-discontinuous-20}
\Omega_{\leq t} := \operatorname*{interior} \left(\bigcup_{T \in {\mathcal T}_{\leq t}} \overline T\right), 
\qquad 
\Omega_{> t}:= \operatorname*{interior} \left(\bigcup_{T \in {\mathcal T}_{> t}} \overline T\right). 
\end{align} 
There is a constant $\delta > 0$, depending solely on the $\gamma$-shape regularity 
of ${\mathcal T}$, such that the ``$\delta t$-neighborhood'' 
$T_{\delta t}:= \Omega \cap \cup_{x \in T} B_{\delta t}(x)$ of each element in ${\mathcal T}_{> t}$ 
is contained in the patch of the element, i.e., 
$$
T_{\delta t} \subset \omega (T) \qquad \forall T \in {\mathcal T}_{> t}. 
$$
Moreover, for each $T \in {\mathcal T}_{> t}$, we define the \emph{inside strip} $S_{T,\delta t}$ at the boundary $\partial T$ of $T$ by 
\begin{align}
\label{eq:thm:stability-discontinuous-160}
S_{T,\delta t} &:= \{x \in T \colon \dist{x}{\partial T} < \delta t\}. 
\end{align}
%
Concerning the set ${\mathcal T}_{\leq t}$, we claim the existence of $\eta \ge \delta$, $C > 0$ depending only
on the $\gamma$-shape regularity of ${\mathcal T}$ such that the extended set 
\begin{equation*}
\Omega_{\eta t}:= \Omega \cap \bigcup_{x \in \Omega_{\leq t}} B_{\eta t}(x) 
\end{equation*}
satisfies the conditions
\begin{align}
\label{eq:thm:stability-discontinuous-40}
T \in {\mathcal T}_{\leq t}  &\Longrightarrow \omega^2 (T) \subset \Omega_{\eta t}, \\
\label{eq:thm:stability-discontinuous-50}
T \in {\mathcal T}  \mbox{ with }T \subset \Omega_{\eta t} &\Longrightarrow \diam T \leq C t,\\
	\label{eq:thm:stability-discontinuous-50a}
T \in {\mathcal T}  \mbox{ with } T \cap \Omega_{\eta t}\neq \emptyset &\Longrightarrow \omega(T) 
	\subset \Omega_{c\eta t}.
\end{align}
	where $c >0$ is a constant depending solely on the $\gamma$-shape regularity 
	of ${\mathcal T}$.
The choice of $\eta$ is dictated by the requirement 
(\ref{eq:thm:stability-discontinuous-40}). We note that the $\gamma$-shape regularity of ${\mathcal T}$ ensures that 
for all $T \in {\mathcal T}_{\leq t}$ the diameters of all elements $T^\prime \subset \omega (T)$ are bounded 
by $\widehat C t$ for some $\widehat C > 0$ depending only on $\gamma$. 
This implies (\ref{eq:thm:stability-discontinuous-40}) if $\eta$ is chosen sufficiently large. 

To see  (\ref{eq:thm:stability-discontinuous-50}), 
it suffices to consider elements $T \in {\mathcal T}$ with 
$T \subset \Omega_{\eta t} \setminus \Omega_{\leq t}$. 
Let $m_T$ be the center of the largest inscribed sphere in $T$ 
and note that the radius $\rho_T$ of that sphere 
is comparable to the element diameter $h_T$. Let $\widetilde m_T \in \overline{\Omega_{\leq t}}$ satisfy
$\dist{m_T}{\Omega_{\leq t}}  = \dist{m_T}{\widetilde m_T}$.
By definition of $\Omega_{\eta t}$, we have 
$m_T \in B_{\eta t}(\widetilde m_T)$ and by $T \subset \Omega_{\eta t} \setminus \Omega_{\leq t}$ that 
$B_{\rho_T}(m_T) \subset \Omega_{\eta t} \setminus \Omega_{\leq t}$. Thus, 
$$h_T \sim \rho_T \leq \dist{m_T}{\Omega_{\leq t}}  = \dist{m_T}{\widetilde m_T} \leq \eta t,$$
which proves (\ref{eq:thm:stability-discontinuous-50}). 

With the sets from \eqref{eq:thm:stability-discontinuous-20} and \eqref{eq:thm:stability-discontinuous-160}, 
we decompose for $k \in \N_0$ and $v \in H^k(\Omega)$ 
\begin{align}\label{eq:domainDecomp}
 \norm{v}_{H^{k}(\Omega)}^2 &\lesssim   \norm{v}_{H^{k}(\Omega_{\leq t})}^2 +  \norm{v}_{H^{k}(\Omega_{> t})}^2 \nonumber \\
  &\lesssim \norm{v}_{H^{k}(\Omega_{\leq t})}^2 + 
  \sum_{T \in \TT_{>t}} \norm{v}_{H^{k}(T\backslash S_{T,\delta t})}^2 
  + \sum_{T \in \TT_{>t}} \norm{v}_{H^{k}(S_{T,\delta t})}^2.
\end{align}
We employ this decomposition in \eqref{eq:KfunctionalEstimate} for 
$k = m-1$ and $v=\II u_i - {\mathcal A}_{\delta t}(\II u_i)$ as well as for $k=m$ and 
$v={\mathcal A}_{\delta t}(\II u_i)$ and $i \in \{0,1\}$.

In the following, we estimate all these contributions separately by the desired $H^{m-1/2}(\Omega)$-norm of $u$. 
The main ideas are that, on $\Omega_{\leq t}$, we exploit that elements are small. On 
$T\backslash S_{T,\delta t}$, we may exploit that a sufficiently small neighborhood of this set is still contained in 
$T$, and we can use the smoothness of $\II u_i$ inside $T$. For $S_{T,\delta t}$, we exploit the thinness of the strip.
\newline

{\em 2.~step:} We estimate $\II u_i - {\mathcal A}_{\delta t}(\II u_i)$ on $\Omega_{\leq t}$, where
$\delta \leq \eta $ is given by step~1. 

For $i=0$, we use the stability estimates of Proposition~\ref{prop:KM15}
and Lemma~\ref{lemma:approximation} and finally Corollary~\ref{cor:H12-estimate-scott-zhang} 
(using $\overline{h}\sim t$ due to \eqref{eq:thm:stability-discontinuous-50})
to obtain 
\begin{align*}
\|\II u_0 - \average{\delta t}{(\II u_0)}\|_{H^{m-1}(\Omega_{\leq t})} &\leq 
\|\II u_0\|_{H^{m-1}(\Omega_{\leq t})} + \|\average{\delta t}{(\II u_0)}\|_{H^{m-1}(\Omega_{\leq t})} \\
&\lesssim
\|\II u_0\|_{H^{m-1}(\Omega_{\leq t})} 
+ \|\II u_0\|_{H^{m-1}(\Omega_{\eta t})}  \\
&\stackrel{(\ref{eq:discontinuous-H1-stability})}{\lesssim}
\|u_0\|_{H^{m-1}(\Omega_{c\eta t})}
= \|u - J_t u\|_{H^{m-1}(\Omega_{c\eta t})} \\
&\stackrel{\rm Cor.~\ref{cor:H12-estimate-scott-zhang}}{\lesssim}t^{1/2}
\norm{u}_{H^{m-1/2}(\Omega)}. 
\end{align*}

For $i=1$, we use the approximation property of $\II$ 
(cf.\ (\ref{eq:lemma:approximation-10}) with $r = m-1$ and $k = m$) 
together with the fact that the element size 
of elements in $\Omega_{\leq t}$ is bounded by $t$ as well 
as the local stability and approximation properties of 
${\mathcal A}_{\delta t}$ from Proposition~\ref{prop:KM15} to get 
\begin{align*}
& \|\II u_1 - \average{\delta t}{(\II u_1)}\|_{H^{m-1}(\Omega_{\leq t})}  \\
&\qquad \leq \|\II u_1 - u_1\|_{H^{m-1}(\Omega_{\leq t})} + 
\|u_1 - \average{\delta t}{u_1}\|_{H^{m-1}(\Omega_{\leq t})} + 
\|\average{\delta t}{(u_1 - \II u_1)}\|_{H^{m-1}(\Omega_{\leq t})}  \\
&\qquad \stackrel{ h \lesssim t}{\lesssim }
t \|u_1\|_{H^m(\Omega_{\eta t})} + 
t \|u_1\|_{H^m(\Omega_{\eta t})} + 
\|u_1 - \II u_1\|_{H^{m-1}(\Omega_{\eta t})} \\
&\qquad\stackrel{h \lesssim t}{\lesssim} t \|u_1\|_{H^m(\Omega_{c\eta t})}
= t \|J_t u\|_{H^m(\Omega_{c\eta t})} \lesssim t^{1/2} \norm{u}_{H^{m-1/2}(\Omega)},  
\end{align*}
where the last step follows from Corollary~\ref{cor:H12-estimate-scott-zhang}. \newline

{\em 3.~step:} We estimate of ${\mathcal A}_{\delta t}(\II u_i)$ on $\Omega_{\leq t}$. 
For $i=0$, we estimate using the stability properties of the smoothing operator 
from Proposition~\ref{prop:KM15}, the stability of $\II$, and Corollary~\ref{cor:H12-estimate-scott-zhang} 
\begin{align*}
t\|\average{\delta  t}{(\II u_0)}\|_{H^m(\Omega_{\leq t})} 
&\lesssim \|\II u_0 \|_{H^{m-1}(\Omega_{\eta t})}  
\stackrel{(\ref{eq:discontinuous-H1-stability})}{\lesssim}
\|u_0\|_{H^{m-1}(\Omega_{c\eta t}))} \lesssim t^{1/2}\|u\|_{H^{m-1/2}(\Omega)}.
\end{align*}
Similarly, for $u_1 \in H^m(\Omega)$, we obtain
\begin{align*}
\nonumber 
t\|\average{\delta t}{(\II u_1)}\|_{H^m(\Omega_{\leq t})} &\lesssim 
t\|\average{\delta t}{(\II u_1 - u_1)}\|_{H^m(\Omega_{\leq t})}  + 
t\|\average{\delta t}{u_1}\|_{H^m(\Omega_{\leq t})}   \\
\nonumber 
&\stackrel{\text{Prop.~\ref{prop:KM15}}}{\lesssim}\|\II u_1 - u_1\|_{H^{m-1}(\Omega_{\eta t})} 
+ t\|u_1\|_{H^m(\Omega_{\eta t})}  \\
&
\stackrel{(\ref{eq:discontinuous-H1-stability}), h \le t}{\lesssim} t\|u_1\|_{H^m(\Omega_{c \eta t})} \lesssim t^{1/2}\|u\|_{H^{m-1/2}(\Omega)},  
\end{align*}
where the last step again follows from the definition of $u_1$ and Corollary~\ref{cor:H12-estimate-scott-zhang}. \newline

{\em 4.~step:} We derive estimates on $T\backslash S_{T,\delta t}$ for $T \in {\mathcal T}_{>t}$. Since the 
``$\delta t$-neighborhood'' $(T\backslash S_{T,\delta t})_{\delta t}$ of $T\backslash S_{T,\delta t}$ satisfies
$(T\backslash S_{T,\delta t})_{\delta t} \subseteq T$, Proposition~\ref{prop:KM15} and an
inverse inequality imply
\begin{align*}
  \|\II u_0 - \average{\delta t}{(\II u_0)}\|_{H^{m-1}(T \setminus S_{T,\delta t})} &\lesssim  t \|\II u_0\|_{H^m(T)} 
\lesssim t h_T^{-1} \|\II u_0\|_{H^{m-1}(T)}\\
&\stackrel{(\ref{eq:discontinuous-H1-stability})}{ \lesssim } 
t h_T^{-1} \|u_0\|_{H^{m-1}(\omega^2 (T))}. 
\end{align*}
Summation over all elements $T \in \TT_{>t}$ and 
Corollary~\ref{cor:H12-estimate-scott-zhang}, \eqref{eq:cor:H12-estimate-scott-zhang-20}--\eqref{eq:cor:H12-estimate-scott-zhang-25} 
(noting that $t<h_T$ implies $\overline{h} = h$ on $\TT_{>t}$) give
the desired estimate
\begin{align}\label{eq:thm:stability-discontinuous-tmp1}
&\sum_{T \in \TT_{>t}}  \|\II u_0 - \average{\delta t}{(\II u_0)}\|_{H^{m-1}(T \setminus S_{T,\delta t})}^2 \lesssim 
 t^2\sum_{T \in \TT_{>t}} h_T^{-2} \|u_0\|_{H^{m-1}(\omega^2 (T))}^2 \nonumber\\ 
 &\qquad\qquad\qquad\stackrel{t<h_T}\lesssim t \sum_{j=0}^{m-1}
 \norm{\overline{h}^{-1/2} \nabla^j(u-J_t u) }_{L^2(\Omega)}^2 \lesssim t\norm{u}_{H^{m-1/2}(\Omega)}^2.
\end{align}
Similarly, the approximation properties of $\average{\delta t}$, the stability of $\II$, and 
Corollary~\ref{cor:H12-estimate-scott-zhang} give 
\begin{align}\label{eq:thm:stability-discontinuous-tmp2}
\nonumber
 \sum_{T \in \TT_{>t}} \|\II u_1 - \average{\delta t}{(\II u_1)}\|_{H^{m-1}(T \setminus S_{T,\delta t})}^2 
 &\lesssim t^2 \sum_{T \in \TT_{>t}} \|\II u_1\|_{H^m(T)}^2  
  \stackrel{(\ref{eq:discontinuous-H1-stability})}{ \lesssim } 
  t^2 \sum_{T \in \TT_{>t}} \|u_1\|_{H^m(\omega^2 (T))}^2 \\
&\lesssim t \sum_{T \in \TT_{>t}}h_T \|J_t u\|_{H^m(\omega^2 (T))}^2 \lesssim t\norm{u}_{H^{m-1/2}(\Omega)}^2. 
\end{align}
Using the stability instead of the approximation properties of 
$\average{\delta t}$ from Proposition~\ref{prop:KM15}, the same arguments and an inverse estimate lead to  
\begin{align*}
t\|\average{\delta t}{(\II u_0)}\|_{H^m(T\setminus S_{T,\delta t})} 
&\lesssim t\|\II u_0\|_{H^m(T)} 
\lesssim t h_T^{-1} \|u_0\|_{H^{m-1}(\omega^2 (T))} 
\end{align*}
as well as
\begin{align*}
t\|\average{\delta t}{(\II u_1)}\|_{H^m(T\setminus S_{T,\delta t})} 
&\lesssim t\|\II u_1\|_{H^m(T)} 
\lesssim  t\|u_1\|_{H^m(\omega^2 (T))}. 
\end{align*}
Summing and employing Corollary~\ref{cor:H12-estimate-scott-zhang}, we obtain the 
desired estimates as in \eqref{eq:thm:stability-discontinuous-tmp1} and \eqref{eq:thm:stability-discontinuous-tmp2}.
\newline

{\em 5.~step:} We derive approximation results for $\II$ on the strip $S_{T,\delta t}$ for $T \in \TT_{>t}$. 
For $v \in H^m(\Omega)$, we claim
\begin{align}
\label{eq:thm:stability-discontinuous-185}
\|v - \II v\|_{H^{m-1}(S_{T,\delta  t})} 
&\lesssim  \sqrt{t h_T} \|v\|_{H^m(\omega^2(T))}. 
\end{align}
With the aid of \cite[Lemma~{2.1}]{li-melenk-wohlmuth-zou10} on the reference element and a scaling argument, 
one can show for $v \in H^1(T)$ and $T \in {\mathcal T}_{> t}$
\begin{align}
\label{eq:thm:stability-discontinuous-170}
\|v\|^2_{L^2(S_{T,\delta t})} &\lesssim \frac{t}{h_T} \|v\|^2_{L^2(T)} + t \|v\|_{L^2(T)} \|\nabla v\|_{L^2(T)}. 
\end{align} 
For polynomials $v \in P_p(T)$, an inverse estimate 
and \eqref{eq:thm:stability-discontinuous-170} furthermore lead to 
\begin{align}
\label{eq:thm:stability-discontinuous-180}
\|v\|^2_{L^2(S_{T,\delta t})} &\lesssim \frac{t}{h_T} \|v\|^2_{L^2(T)}. 
\end{align}
To see \eqref{eq:thm:stability-discontinuous-185}, we estimate 
\begin{align*}
\|v - \II v\|^2_{L^2(S_{T,\delta  t})} &
\stackrel{\eqref{eq:thm:stability-discontinuous-170}} {\lesssim}
\frac{t}{h_T}\|v - \II v\|^2_{L^2(T)}  
+ t \|v - \II v\|_{L^2(T)} \|\nabla( v - \II v)\|_{L^2(T)}  \\
& \stackrel{\eqref{eq:lemma:approximation-10}}{\lesssim }
h_T t \|v\|^2_{H^1(\omega^2(T))}.
\end{align*}
Applying \eqref{eq:thm:stability-discontinuous-170} to derivative of $u-\II u$ for $m=2$, the same argument gives 
\begin{align*}
\|v - \II v\|^2_{H^{m-1}(S_{T,\delta  t})} & \lesssim 
h_T t \|v\|^2_{H^m(\omega^2(T))}. 
\end{align*}

{\em 6.~step:} We derive estimate of
$\II u_i - \average{\delta t}{(\II u_i)}$ on the strip $S_{T,\delta t}$ for $T \in \TT_{>t}$.
Here, we need the ``$\delta t$-neighborhood'' $(S_{T,\delta t})_{\delta t}$ of the strip $S_{T,\delta t}$.
Our assumption on $\delta$ implies that $(S_{T,\delta t})_{\delta t} \subset \omega(T)$. Moreover,
we note that the strip $(S_{T,\delta t})_{\delta t}$ is contained in the inside strip $S_{T,2\delta t}$  
of $T$ and in parts of the inside strip of width $\delta t$ of the elements $T' \in \omega(T)$.

Using Proposition~\ref{prop:KM15} and 
\eqref{eq:thm:stability-discontinuous-180} on each element of the patch $\omega(T)$ separately
for $v = \II u_0$ in the case $m = 1$ or 
$v = \nabla \II u_0$ for $m=2$, we get, since $h_{T'} \sim h_T$ for $T' \in \omega(T)$, 
\begin{align}\label{eq:eststriptmp1}
\nonumber 
\|\II u_0 - \average{\delta t}{(\II u_0)}\|_{H^{m-1}(S_{T,\delta t})} &\leq 
\|\II u_0\|_{H^{m-1}(S_{T,\delta t})} + \|\average{\delta t}{(\II u_0)}\|_{H^{m-1}(S_{T,\delta t})} \\
&\lesssim 
\|\II u_0\|_{H^{m-1}((S_{T,\delta t})_{\delta t})} 
\stackrel{\eqref{eq:thm:stability-discontinuous-180}}
{\lesssim} t^{1/2}h_T^{-1/2}
\|\II u_0\|_{H^{m-1}(\omega (T))}  \\ \nonumber
&\lesssim t^{1/2}h_T^{-1/2} \| u_0\|_{H^{m-1}(\omega^3(T))}.
\end{align}
Summing over all elements $T \in \TT_{>t}$ and employing the argument from \eqref{eq:thm:stability-discontinuous-tmp1},
we get the desired bound by $t^{1/2} \norm{u}_{H^{m-1/2}(\Omega)}$.
For $u_1$, we use the triangle inequality, Proposition~\ref{prop:KM15}, and \eqref{eq:thm:stability-discontinuous-185}
\begin{align*}
& \|\II u_1 - \average{\delta t}{(\II u_1)}\|_{H^{m-1}(S_{T,\delta t})}  \\
&\qquad\leq
\|\II u_1 - u_1\|_{H^{m-1}(S_{T,\delta t})} + \|u_1 - \average{\delta t}{u_1}\|_{H^{m-1}(S_{T,\delta t})} 
+ \|\average{\delta t}{(u_1-\II u_1)}\|_{H^{m-1}(S_{T,\delta t})} \\
&\qquad \stackrel{\text{Prop.~\ref{prop:KM15}}}{\lesssim}  
\|\II u_1 - u_1\|_{H^{m-1}((S_{T,\delta t})_{\delta t})} + \|u_1 - \average{\delta t}{u_1}\|_{H^{m-1}(S_{T,\delta t})} 
\\
&\qquad \stackrel{
\eqref{eq:thm:stability-discontinuous-185}, 
\text{Prop.~\ref{prop:KM15}}}{\lesssim}  
\sqrt{ t h_T} \|u_1\|_{H^m(\omega^3(T))} + t \|u_1\|_{H^m(\omega(T))} 
\stackrel{t \leq h_T}{\lesssim} \sqrt{t h_T}\|u_1\|_{H^m(\omega^3(T))}. 
\end{align*}
Summing over all elements $T \in \TT_{>t}$ and employing the argument from \eqref{eq:thm:stability-discontinuous-tmp2},
we get the desired bound by $t^{1/2} \norm{u}_{H^{m-1/2}(\Omega)}$.\\

{\em 7.~step:} We estimate $\average{\delta t}{(\II u_i)}$ on the strip $S_{T,\delta t}$ for $T \in \TT_{>t}$.
The inverse estimate for $\average{\delta t}{}$ of Proposition~\ref{prop:KM15}, 
\eqref{eq:thm:stability-discontinuous-180} employed on the patch $\omega(T)$ as in the previous step, and 
the stability (\ref{eq:discontinuous-H1-stability}) of $\II$ imply 
\begin{align}\label{eq:eststriptmp2}
t\|\average{\delta t}{(\II u_0)}\|_{H^m(S_{T,\delta t})} &\lesssim 
\|\II u_0\|_{H^{m-1}((S_{T,\delta t})_{\delta t})} \lesssim 
t^{1/2}h_T^{-1/2} \|\II u_0\|_{H^{m-1}(\omega (T))} \\ \nonumber
&\lesssim 
t^{1/2}h_T^{-1/2} \|u_0\|_{H^{m-1}(\omega^3 (T))}. 
\end{align}
Summing over all elements $T \in \TT_{>t}$ and employing the argument from \eqref{eq:thm:stability-discontinuous-tmp1},
we get the desired bound by $t^{1/2} \norm{u}_{H^{m-1/2}(\Omega)}$.
For $u_1$, Proposition~\ref{prop:KM15} and \eqref{eq:thm:stability-discontinuous-185} on the patch
$\omega(T)$ give
\begin{align*}
t\|\average{\delta t}{(\II u_1)}\|_{H^m(S_{T,\delta t})} &\leq 
t\|\average{\delta t}{(u_1 - \II u_1)}\|_{H^m(S_{T,\delta t})} + 
t\|\average{\delta t}{u_1}\|_{H^m(S_{T,\delta t})} 
\\
&\lesssim \|u_1 - \II u_1\|_{H^{m-1}((S_{T,\delta t})_{\delta t})} + t\|u_1\|_{H^m((S_{T,\delta t})_{\delta t})}  \\
&\stackrel{(\ref{eq:thm:stability-discontinuous-185})}{\lesssim} 
(t h_T)^{1/2} \|u_1\|_{H^m(\omega^3 (T))} + t\|u_1\|_{H^m(\omega^3 (T))} 
\\
&\stackrel{t < h_T}{\lesssim}  (t h_T)^{1/2} \|u_1\|_{H^m(\omega^3 (T))}.
\end{align*}
Summing over all elements $T \in \TT_{>t}$ and employing the argument from \eqref{eq:thm:stability-discontinuous-tmp2},
we get the desired bound by $t^{1/2} \norm{u}_{H^{m-1/2}(\Omega)}$. \newline

Combining the estimates of steps 2--7, where all relevant terms are bounded by $t^{1/2} \norm{u}_{H^{m-1/2}(\Omega)}$,
gives the desired bound for \eqref{eq:KfunctionalEstimate}, which proves \eqref{eq:thm:stability-discontinuous-i}.
\newline

{\em Final step:} We show (\ref{eq:thm:stability-discontinuous-ii}) with similar arguments as in step 2--7. 
Let $u = u_0 + u_1$ be an arbitrary decomposition with $u_0 \in H^{m-1}(\Omega)$ and $u_1 \in H^m(\Omega)$. 
We distinguish the cases $t \leq h$ and 
$t > h$, where $h$ is the maximal mesh size of the quasi-uniform triangulation. We note that in the decomposition 
\eqref{eq:domainDecomp} the sums $\sum_{T \in {\mathcal T}_{> t}}$ are not present in the case 
$t > h$ and the terms involving $\|\cdot\|_{H^{m-1}(\Omega_{\leq t})}$ or $\|\cdot \|_{H^m(\Omega_{\leq t})}$ in the converse case. 
Inspection of the above arguments therefore gives: 
\begin{itemize}
\item For $t > h$: As in steps 2--3, we get
\begin{align*}
t^{-1} \|\II u_0  - \average{\delta t}{(\II  u_0)}\|^2_{H^{m-1}(\Omega)} + t \|\average{\delta t}{(\II  u_0)}\|^2_{H^m(\Omega)} 
&\lesssim t^{-1} \|u_0\|^2_{H^{m-1}(\Omega)}, \\
t^{-1} \|\II u_1  - \average{\delta t}{(\II  u_1)}\|^2_{H^{m-1}(\Omega)} + t \|\average{\delta t}{(\II  u_1)}\|^2_{H^m(\Omega)} 
&\lesssim t \|u_1\|^2_{H^{m}(\Omega)}. 
\end{align*}
This implies $t^{-1/2} K(t,\II u) \lesssim t^{-1/2} \|u_0\|_{H^{m-1}(\Omega)} + t^{1/2} \|u_1\|_{H^m(\Omega)}$. Infimizing over all possible 
decompositions $u = u_0 + u_1$ yields $t^{-1/2} K(t,\II u) \lesssim t^{-1/2} K(t,u) \lesssim \|u\|_{B^{m-1/2}_{2,\infty}(\Omega)}$. 
\item For $t \leq h$: As in steps 4--7, we get
\begin{align*}
t^{-1} \|\II u_0  - \average{\delta t}{(\II  u_0)}\|^2_{H^{m-1}(\Omega)} + t \|\average{\delta t}{(\II  u_0)}\|^2_{H^m(\Omega)} 
&\lesssim h^{-1} \|u_0\|^2_{H^{m-1}(\Omega)}, \\
t^{-1} \|\II u_1  - \average{\delta t}{(\II  u_1)}\|^2_{H^{m-1}(\Omega)} + t \|\average{\delta t}{(\II  u_1)}\|^2_{H^m(\Omega)} 
&\lesssim h \|u_1\|^2_{H^{m}(\Omega)}. 
\end{align*}
This implies $t^{-1/2} K(t,\II u) \lesssim h^{-1/2} \|u_0\|_{H^{m-1}(\Omega)} + h^{1/2} \|u_1\|_{H^m(\Omega)}$. Infimizing over all possible 
decompositions $u = u_0 + u_1$ yields $t^{-1/2} K(t,\II u) \lesssim h^{-1/2} K(h,u) \lesssim \|u\|_{B^{m-1/2}_{2,\infty}(\Omega)}$. 
\end{itemize}
Combining the above two cases yields $\sup_{t > 0} K(t,\II u) \lesssim \|u\|_{B^{m-1/2}_{2,\infty}(\Omega)}$, as claimed. 
\end{proof}

While for \emph{finite} meshes we have the continuous embeddings 
$S^{p,1}(\TT) \subset B^{3/2}_{2,\infty}(\Omega)$
and $S^{p,0}(\TT) \subset B^{1/2}_{2,\infty}(\Omega)$, this is not necessarily
the case for infinite meshes. As a consequence, one cannot expect that 
on general K-meshes a stability 
$\II:B^{1/2}_{2,\infty}(\Omega) \rightarrow B^{1/2}_{2,\infty}(\Omega)$ can
hold. The following example illustrates this.

\begin{example}
Let $\Omega = (0,1)$. Set $I_1 = (0,1/2)$ and $I_2 = (1/2,1)$.  
Let $\varphi \in C^\infty(\BbbR)$ be a $1$-periodic function whose averages 
$\overline{\varphi}_1:= 1/|I_1| \int_{I_1} \varphi(x)\,dx$ and 
$\overline{\varphi}_2:= 1/|I_2| \int_{I_2} \varphi(x)\,dx$ are {\em different}. 
Define the function $u \in C^\infty((0,\infty))$ by 
$$
u(x):= \varphi(\ln x). 
$$
Define the (infinite) mesh ${\mathcal T}$ on $\Omega$ whose elements 
are given by the break points $x_j = e^{-2j}$, $j\in \N_0$. 
Let $m = 1$ and let $\II:L^2(\Omega) \rightarrow S^{0,0}({\mathcal T})$ be the 
$L^2$-projection onto the piecewise constant functions. By the periodicity 
of $\varphi$ the piecewise 
constant function $\II u$ takes only the values $\overline{\varphi}_{1}$ and $\overline{\varphi}_2$
\begin{align*}
(\II u)|_{(x_{j+1},x_{j})} = 
\begin{cases}
\overline{\varphi}_1 & \mbox{ if $j$ is even } \\
\overline{\varphi}_2 & \mbox{ if $j$ is odd }.  
\end{cases}
\end{align*}
The computation of Besov norms is conveniently done in terms of the modulus of smoothness as defined 
in, e.g., \cite[Chap.~2, Sec.~7]{devore93}. 
For an interval $[a,b]$ and a function $v$ defined on $A:=[a,b]$, and $t > 0$, we define
the difference operator $\Delta_1$ by $(\Delta_h v)(x) := v(x+h) - v(x)$ on $A_h:= [a,b-h]$.  
the modulus of smoothness $\omega_1(v,t)_2$ is then given by 
$\omega_1(v,t)_2:= \sup_{0 < h \leq t} \|\Delta_h(v,\cdot)\|_{L^2(A_{h})}$. 
Let $t > 0$. Consider all elements with diameter $ > t$. For the region covered by these elements, 
$\Omega_{> t}$, we can compute the modulus of smoothness $\omega_1$ 
in view of the fact that $\II u$ is piecewise constant 
\begin{align*}
\omega_1(\II u,t)_{2,\Omega_{>t}}^2 = \sum_{x_j: x_j > t} t |[\II u]({x_j})|^2, 
\end{align*}
where $[\II u]({x_j})$ denotes the jump of $\II u$ at the break point $x_j$. We conclude 
\begin{align*}
\omega_1(\II u,t)^2_2 \ge \omega_1(\II u,t)_{2,\Omega_{>t}}^2 = \sum_{x_j: x_j > t} t |[\II u]({x_j})|^2 
 = \sum_{x_j: x_j > t} |\overline{\varphi}_1 - \overline{\varphi}_2|^2 t  \sim 
 |\overline{\varphi}_1 - \overline{\varphi}_2|^2 t |\ln t|. 
\end{align*}
Next, we claim that $\omega_1(u,t)^2_2 \lesssim t$. Since $u$ is bounded, we compute for $0 < h \leq t$
we estimate 
\begin{align*} 
\int_{0}^{1-h} |\Delta_h u|^2\,dx & =
\int_{0}^{1-h} |u(x+h) - u(x)|^2\,dx \\
& = 
\int_{0}^{h} |u(x+h) - u(x)|^2\,dx  + 
\int_{t}^{1-h} |u(x+h) - u(x)|^2\,dx   \\
& \leq 2 h \|u\|^2 _{L^\infty(\Omega)} + \int_{h}^1 \left| \int_{x}^{x+h} u^\prime(\xi)\,d\xi\right|^2\,dx \\
& \leq 2 h \|u\|^2 _{L^\infty(\Omega)} + \int_{h}^1 \left| \int_{x}^{x+h} \|\varphi^\prime\|_{L^\infty(\Omega)} \frac{1}{\xi} \,d\xi\right|^2\,dx \\ 
& \leq 2 h \|u\|^2 _{L^\infty(\Omega)}  + \|\varphi^\prime\|^2_{L^\infty(\Omega)} \int_{h}^1 \left(\frac{h}{x}\right)^2 \,dx 
\leq 2 h \|u\|^2 _{L^\infty(\Omega)} + \|\varphi^\prime\|^2_{L^\infty(\Omega)} h. 
\end{align*}
This implies $\omega_1(u,t)_2 \leq C t^{1/2}$ and therefore 
$u \in B^{1/2}_{2,\infty}(\Omega)$, since, by  
\cite[Chap.~6, Thm.~{2.4}]{devore93}, $\omega(u,t)_2 \sim K(t,u) = 
\inf_{v \in H^1(I)} \|u - v\|_{L^2(\Omega)} + t\|v\|_{H^1(\Omega)}$. 
However, the above calculation shows that 
$\II u \not\in B^{1/2}_{2,\infty}(\Omega)$, which implies that 
we will not be able to control $\II u$ uniformly in the mesh.
\eremk
\end{example}

\subsection{Some generalizations and applications}

For quasi-uniform meshes, there also holds the following inverse estimate for the limiting case. 
\begin{lemma}
\label{lemma:inverse-estimate}
Let $\TT$ be a quasi-uniform mesh on $\Omega$ of mesh size $h$ and $ m \in \{1,2\}$. Then, 
\begin{align}
\label{eq:lemma:inverse-estimate-10}
\|u\|_{B^{m-1/2}_{2,\infty}(\Omega)
      } \leq C h^{-(m-1/2)}  \|u\|_{L^2(\Omega)} 
\qquad \forall u \in S^{p,1}(\TT). 
\end{align}
More generally, for any real $m' \in (0,m-1/2)$ and $q \in [1,\infty]$, there holds by 
interpolation
\begin{align}
\label{eq:lemma:inverse-estimate-15}
\|u\|_{B^{m'}_{2,q}(\Omega)} \leq C h^{-m'}  \|u\|_{L^2(\Omega)} 
\qquad \forall u \in S^{p,1}(\TT). 
\end{align}
The constant $C > 0$ depends only on $\Omega,d$, the $\gamma$-shape-regularity of $\TT$, and $p$. 
\end{lemma}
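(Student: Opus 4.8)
The plan is to establish the endpoint estimate \eqref{eq:lemma:inverse-estimate-10} directly via the $K$-functional and then obtain \eqref{eq:lemma:inverse-estimate-15} by interpolation. Fix $u \in S^{p,1}(\TT)$ and $t > 0$. We must bound $t^{-(m-1/2)} K(t,u)$ uniformly in $t$, where $K(t,u) = \inf_{v \in H^{\sigma+1}} \|u - v\|_{H^\sigma} + t\|v\|_{H^{\sigma+1}}$ with $\sigma = m-1$. The dichotomy is the usual one: for $t \ge h$ one uses the trivial competitor $v = 0$ (for $m=1$) or a suitable smoothed version of $u$, together with a \emph{standard} (non-endpoint) inverse estimate in integer Sobolev norms; for $t < h$ one uses the competitor $v = u$ itself since $u \in H^\sigma(\Omega)$ already, and then one only has to estimate $t\|u\|_{H^\sigma}$ against $t h^{-1}\|u\|_{L^2}$ via the classical inverse inequality $\|u\|_{H^{m-1}(\Omega)} \lesssim h^{-(m-1)}\|u\|_{L^2(\Omega)}$ on quasi-uniform meshes.

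First I would treat $t \ge h$. Here I pick $v := {\mathcal A}_{\delta t}(u)$, the mollifier of Proposition~\ref{prop:KM15} (or, for $m=1$, simply $v=0$, which already gives $K(t,u) \le \|u\|_{L^2} \le h^{1/2} t^{-1/2} \cdot t^{1/2}\|u\|_{L^2}$ — wait, more carefully: $K(t,u)\le \|u\|_{L^2}$ and $t^{-1/2}\|u\|_{L^2} \le h^{-1/2}\|u\|_{L^2}$ since $t\ge h$, giving exactly the bound for $m=1$). For $m=2$ with $t\ge h$, using $v = {\mathcal A}_{\delta t} u$, Proposition~\ref{prop:KM15} gives $\|u - v\|_{H^1} \lesssim (\delta t)^{k-1}\|u\|_{H^k}$ and $\|v\|_{H^2}\lesssim (\delta t)^{k-2}\|u\|_{H^k}$ for $k \in \{1,2\}$; taking $k=2$ and using the integer inverse estimate $\|u\|_{H^2}\lesssim h^{-2}\|u\|_{L^2} \le h^{-1} t^{-1} \cdot t h^{-1}\|u\|_{L^2}$... the bookkeeping here needs the combination $\|u-v\|_{H^1} + t\|v\|_{H^2} \lesssim t h^{-1}\|u\|_{H^1} \lesssim t h^{-2}\|u\|_{L^2}$, so $t^{-(m-1/2)}K(t,u) = t^{-3/2}K(t,u) \lesssim t^{-1/2} h^{-2}\|u\|_{L^2} \le h^{-3/2}\|u\|_{L^2}$, using $t \ge h$. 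So the key inputs are exactly Proposition~\ref{prop:KM15} and the textbook inverse estimate between integer-order Sobolev norms on quasi-uniform meshes.

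For $t < h$, take $v = u$: then $u - v = 0$ and $K(t,u) \le t\|u\|_{H^{m-1}(\Omega)} \lesssim t h^{-(m-1)}\|u\|_{L^2(\Omega)}$, whence $t^{-(m-1/2)}K(t,u) \lesssim t^{1/2 - m} t h^{-(m-1)}\|u\|_{L^2} = t^{3/2-m} h^{1-m}\|u\|_{L^2} \le h^{3/2-m}h^{1-m}\cdot$... here for $m \le 2$ one has $3/2 - m \le 0$ only when $m \ge 3/2$, i.e. $m=2$; for $m=1$ the exponent $3/2-m = 1/2 > 0$ so one bounds $t^{1/2} \le h^{1/2}$ and gets $h^{1/2}h^0\|u\|_{L^2} = h^{-1/2+1}\ldots$ — in all cases the powers of $h$ collapse to $h^{-(m-1/2)}$ after using $t < h$ with the correct sign of the exponent. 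Taking the supremum over both regimes in $t$ yields \eqref{eq:lemma:inverse-estimate-10}. Finally, \eqref{eq:lemma:inverse-estimate-15} follows by interpolating the trivial estimate $\|u\|_{L^2}\le \|u\|_{L^2}$ with \eqref{eq:lemma:inverse-estimate-10}, using the reiteration/interpolation property of Besov spaces $B^{m'}_{2,q} = (L^2, B^{m-1/2}_{2,\infty})_{\theta,q}$ for $m' = \theta(m-1/2)$, $\theta\in(0,1)$, exactly as in \cite[Lemma~{23.1}]{tartar07}.

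The main obstacle is purely the bookkeeping of the various powers of $t$ and $h$ in the two regimes and ensuring the exponents genuinely combine to $h^{-(m-1/2)}$ after invoking $t \gtrless h$ — there is no deep difficulty, since the endpoint subtlety that made Theorem~\ref{thm:stability-discontinuous} delicate (the strip/thin-layer analysis on non-quasiuniform meshes) is absent here: quasi-uniformity means the mesh-size function is $\simeq h$ globally, so one never needs to split the mesh into $\TT_{\le t}$ and $\TT_{>t}$ simultaneously, and the classical integer-order inverse estimate suffices as the discrete ingredient.
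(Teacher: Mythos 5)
There is a genuine gap, and it sits exactly where the lemma is actually nontrivial, namely the case $m=2$. Your strategy for $t<h$ is to take the competitor $v=u$ in the $K$-functional "since $u\in H^\sigma(\Omega)$ already", and for $t\ge h$ you invoke the integer-order inverse estimate up to $\|u\|_{H^2(\Omega)}\lesssim h^{-2}\|u\|_{L^2(\Omega)}$. But for $m=2$ the $K$-functional penalizes the competitor in $H^{\sigma+1}(\Omega)=H^2(\Omega)$, and a function $u\in S^{p,1}(\TT)$ is only globally $C^0$: its distributional second derivatives contain surface contributions from the jumps of $\nabla u$ across element faces, so in general $u\notin H^2(\Omega)$ and $\|u\|_{H^2(\Omega)}=\infty$. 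Hence $v=u$ is an inadmissible competitor, the global estimate $\|u\|_{H^2(\Omega)}\lesssim h^{-2}\|u\|_{L^2(\Omega)}$ is false (it only holds for the broken, elementwise $H^2$-norm), and your $t\ge h$ computation for $m=2$, which feeds $\|u\|_{H^k}$ with $k=2$ into Proposition~\ref{prop:KM15}, breaks down for the same reason. There is also a normalization error: with $K(t,u)$ taken between $H^{m-1}$ and $H^m$, the Besov norm is $\sup_{t>0}t^{-\theta}K(t,u)$ with $\theta=(m-1/2)-(m-1)=1/2$ for both $m=1,2$, not $t^{-(m-1/2)}K(t,u)$; your exponent $t^{-3/2}$ for $m=2$ does not match the definition used in the paper.

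Concretely, the claim that "the strip/thin-layer analysis is absent here" because of quasi-uniformity is what fails. In the paper's proof, for $t>h$ one indeed takes $v=0$ and gets $t^{-1/2}K(t,u)\lesssim h^{-1/2}\|u\|_{H^1(\Omega)}$, but for $t\le h$ one must take $v=\average{\delta t}{u}$ and split each element into $T\setminus S_{T,\delta t}$ and the inside strip $S_{T,\delta t}$ of width $\delta t$: away from the strip, $u$ is a polynomial and Proposition~\ref{prop:KM15} plus elementwise polynomial inverse estimates give $\|v\|_{H^2(T\setminus S_{T,\delta t})}\lesssim h^{-1}\|u\|_{H^1(T)}$ and $\|u-v\|_{H^1(T\setminus S_{T,\delta t})}\lesssim t h^{-1}\|u\|_{H^1(T)}$, while on the strip one needs the estimates from steps 6--7 of the proof of Theorem~\ref{thm:stability-discontinuous}, $\|v\|_{H^2(S_{T,\delta t})}\lesssim (th)^{-1/2}\|u\|_{H^1(\omega(T))}$ and $\|u-v\|_{H^1(S_{T,\delta t})}\lesssim t^{1/2}h^{-1/2}\|u\|_{H^1(\omega(T))}$, which capture precisely the effect of the gradient jumps that prevent $u$ from lying in $H^2(\Omega)$. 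Summing gives $t^{-1/2}K(t,u)\lesssim h^{-1/2}\|u\|_{H^1(\Omega)}$ for $t\le h$ as well, and a final polynomial inverse estimate $\|u\|_{H^1(\Omega)}\lesssim h^{-1}\|u\|_{L^2(\Omega)}$ yields \eqref{eq:lemma:inverse-estimate-10}. Your argument is correct for $m=1$ (where $S^{p,1}(\TT)\subset H^1(\Omega)$ makes $v=u$ admissible), and your derivation of \eqref{eq:lemma:inverse-estimate-15} by interpolation against the trivial $L^2$ bound is the same as the paper's; but the core case $m=2$ needs the mollifier-plus-strip argument, not the classical integer-order inverse estimate.
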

\begin{proof}
To fix ideas, we only prove
the case $m = 2$ as the case $m = 1$ is handled with similar arguments. By definition, we have
\begin{align*}
\|u\|_{B^{3/2}_{2,\infty}(\Omega)} & = 
\sup_{t > 0} t^{-1/2} K(t,u) 
\end{align*}
with the $K$-functional $K(t,u) = \inf_{v \in H^2(\Omega)} \|u - v\|_{H^1(\Omega)} + t \|v\|_{H^2(\Omega)}$. 
For $t > h$, we estimate 
\begin{align}
\label{eq:lemma:inverse-estimate-20}
t^{-1/2} K(t,u) & = t^{-1/2} \inf_{v \in H^2(\Omega)} \|u - v\|_{H^1(\Omega)} + t \|v\|_{H^2(\Omega)}
\leq t^{-1/2} \|u\|_{H^1(\Omega)} 
\nonumber \\ &
\lesssim h^{-1/2} \|u\|_{H^ {1}(\Omega)} 
\end{align}
by choosing $v\equiv 0$ to estimate the $K$-functional. 

For $t \leq h$, we estimate the $K$-functional more carefully. For a suitably small $\delta>0$,
we set  $v:= {\mathcal A}_{\delta t} u$ with the smoothing operator ${\mathcal A}_{\delta t}$
of Proposition~\ref{prop:KM15}. 
As in the proof of Theorem~\ref{thm:stability-discontinuous}, we decompose an element into 
$T = T\setminus S_{T, \delta t} \cup S_{T, \delta t}$, where $S_{T, \delta t}$ is the inside strip 
defined in the first step of the proof of Theorem~\ref{thm:stability-discontinuous}.
Employing Proposition~\ref{prop:KM15} and a classical polynomial inverse estimate, we obtain
\begin{subequations}
\label{eq:lemma:inverse-estimate-30}
\begin{align}
\|v\|_{H^2(T\setminus S_{T, \delta t})} &\stackrel{\text{Prop.~\ref{prop:KM15}}}{\lesssim} 
\|u\|_{H^2(T)} \lesssim h^{-1} \|u\|_{H^1(T)}, \\
\|u - v\|_{H^1(T\setminus S_{T,\delta t})} &
\stackrel{\text{Prop.~\ref{prop:KM15}}}{\lesssim } t \|u\|_{H^2(T)}
\lesssim t h^{-1} \|u\|_{H^1(T)}.
\end{align}
\end{subequations}
As in steps 6--7 in the proof of Theorem~\ref{thm:stability-discontinuous}, using Proposition~\ref{prop:KM15}
to obtain \eqref{eq:eststriptmp2}, \eqref{eq:eststriptmp1}, we get
\begin{subequations}
\label{eq:lemma:inverse-estimate-35}
\begin{align}
\|v\|_{H^2(S_{T, \delta t})} & 
\stackrel{(\ref{eq:eststriptmp2})}{\lesssim} (th)^{-1/2}  \|u\|_{H^1(\omega(T))}, \\ 
\|u - v\|_{H^1(S_{T,\delta t})} &
\stackrel{(\ref{eq:eststriptmp1})}{\lesssim} t^{1/2} h^{-1/2}  \|u\|_{H^1(\omega(T))}.  
\end{align}
\end{subequations}
Summation over all elements, using (\ref{eq:lemma:inverse-estimate-30})--(\ref{eq:lemma:inverse-estimate-35}) leads to
\begin{align}
\label{eq:lemma:inverse-estimate-40}
t^{-1/2} K(t,u) &\lesssim \left(t^{1/2} h^{-1} + h^{-1/2} \right)\|u\|_{H^1(\Omega)} 
\stackrel{t \leq h}{\lesssim } h^{-1/2} \|u\|_{H^1(\Omega)}. 
\end{align}
Combining (\ref{eq:lemma:inverse-estimate-20}) and (\ref{eq:lemma:inverse-estimate-40}) yields 
$\|u\|_{B^{3/2}_{2,\infty}(\Omega)} \lesssim h^{-1/2} \|u\|_{H^1(\Omega)}$. A further polynomial inverse estimate
gives the desired result.  \\

Finally, \eqref{eq:lemma:inverse-estimate-15} follows from interpolation between 
\eqref{eq:lemma:inverse-estimate-10} and the trivial inequality $\|u\|_{L^2(\Omega)} \leq \|u\|_{L^2(\Omega)}$
noting that 
by the reinterpolation theorem (see, e.g., 
\cite[Chap.~{26}]{tartar07}), we have 
$B^{\theta (m-1/2)}_{2,q}(\Omega) = (L^2(\Omega), B^{m-1/2}_{2,\infty}(\Omega))_{\theta,q}$ (with equivalent norms)
for $\theta \in (0,1)$. 
\end{proof}

The operator $\II$ is stable in $L^2(\Omega)$ (by Assumption~\ref{assumption:I-discontinuous})  
and is stable as an operator $H^{m-1/2}(\Omega) \rightarrow B^{m-1/2}_{2,\infty}(\Omega)$
by Theorem~\ref{thm:stability-discontinuous}. Interpolation therefore yields a stability 
for intermediate spaces.

\begin{corollary}
\label{cor:stability}
Let $\TT$ be a finite shape-regular mesh, $m \in \{1,2\}$, and let $\II:L^2(\Omega) \rightarrow S^{p,m-1}(\TT)$ satisfy
Assumption~\ref{assumption:I-discontinuous}. Fix $q \in [1,\infty]$ and $\theta \in (0,1)$. 
Then, there is a constant $C > 0$ depending only on $\Omega$, $p$, $q$, $\theta$, and the $\gamma$-shape regularity of $\TT$
such that 
\begin{equation}
\label{eq:cor-stability-10}
\|\II u\|_{B^{\theta (m-1/2)}_{2,q}(\Omega)} \leq C \|u\|_{B^{\theta (m-1/2)}_{2,q}(\Omega)}.  
\end{equation}
\end{corollary}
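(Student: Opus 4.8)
The plan is to deduce \eqref{eq:cor-stability-10} purely by interpolation from the two mapping properties of $\II$ that are already at our disposal. First I would upgrade Assumption~\ref{assumption:I-discontinuous} \eqref{item:assumption:I-discontinuous-ii} to a global bound: squaring, summing over all $T \in \TT$, and using that $\gamma$-shape regularity forces the patches $\{\omega(T)\}_{T \in \TT}$ to have finite overlap (with overlap constant depending only on $d$ and $\gamma$), one obtains $\|\II u\|_{L^2(\Omega)} \le C\|u\|_{L^2(\Omega)}$ for every $u \in L^2(\Omega)$. Together with Theorem~\ref{thm:stability-discontinuous}, which gives $\|\II u\|_{B^{m-1/2}_{2,\infty}(\Omega)} \le C\|u\|_{H^{m-1/2}(\Omega)}$, this exhibits $\II$ as a bounded linear operator from the interpolation couple $(L^2(\Omega),H^{m-1/2}(\Omega))$ (both spaces continuously embedded in $L^2(\Omega)$) into the couple $(L^2(\Omega),B^{m-1/2}_{2,\infty}(\Omega))$, with endpoint norms controlled by the constants in Assumption~\ref{assumption:I-discontinuous} and Theorem~\ref{thm:stability-discontinuous}.

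Next I would invoke the interpolation property of the real $K$-method: applying the functor $(\cdot,\cdot)_{\theta,q}$ turns the above into a bounded operator
\[
\II:\;(L^2(\Omega),H^{m-1/2}(\Omega))_{\theta,q}\;\longrightarrow\;(L^2(\Omega),B^{m-1/2}_{2,\infty}(\Omega))_{\theta,q},
\]
whose norm is bounded by a geometric mean of the two endpoint constants. It then remains to identify both interpolation spaces with $B^{\theta(m-1/2)}_{2,q}(\Omega)$. For the domain this is exactly the reinterpolation theorem (\cite[Chap.~26]{tartar07}, already used in the proof of Lemma~\ref{lemma:inverse-estimate}), which gives $(L^2(\Omega),H^{m-1/2}(\Omega))_{\theta,q} = B^{\theta(m-1/2)}_{2,q}(\Omega)$ with equivalent norms. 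For the target I would first rewrite the endpoint space as $B^{m-1/2}_{2,\infty}(\Omega) = (H^{m-1}(\Omega),H^m(\Omega))_{1/2,\infty} = (L^2(\Omega),H^m(\Omega))_{(2m-1)/(2m),\infty}$ (the last identity by reiteration, using $H^{m-1} = (L^2,H^m)_{(m-1)/m,2}$), and then apply reiteration once more:
\[
\big(L^2(\Omega),(L^2(\Omega),H^m(\Omega))_{(2m-1)/(2m),\infty}\big)_{\theta,q} = (L^2(\Omega),H^m(\Omega))_{\theta(2m-1)/(2m),q} = B^{\theta(m-1/2)}_{2,q}(\Omega).
\]
Substituting both identifications into the displayed bound yields \eqref{eq:cor-stability-10}.

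The only point requiring care is this last bookkeeping with interpolation exponents: one must verify that $B^{m-1/2}_{2,\infty}$ really sits between $L^2$ and $H^m$ with the stated parameter (for $m=2$ this passes through the intermediate level $H^1$, since $B^{3/2}_{2,\infty}$ is anchored at $H^1,H^2$ rather than directly at $L^2$), and that all equivalence constants appearing in these norm identities are independent of the mesh. The latter is automatic, since none of the spaces $L^2(\Omega)$, $H^{m-1/2}(\Omega)$, $H^m(\Omega)$, $B^s_{2,q}(\Omega)$ involved in the reiteration depends on $\TT$; hence the constant in \eqref{eq:cor-stability-10} depends only on $\Omega$, $p$, $q$, $\theta$, and the $\gamma$-shape regularity of $\TT$, as claimed. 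Everything else is routine.
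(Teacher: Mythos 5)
Your proposal is correct and follows essentially the same route as the paper's proof: global $L^2$-stability (from Assumption~\ref{assumption:I-discontinuous}) plus the endpoint bound of Theorem~\ref{thm:stability-discontinuous}, interpolated by the real method, with the identification $B^{\theta(m-1/2)}_{2,q}(\Omega)=(L^2(\Omega),B^{m-1/2}_{2,\infty}(\Omega))_{\theta,q}$ via the reinterpolation/reiteration theorem of \cite[Chap.~26]{tartar07}. The only difference is that you spell out the finite-overlap argument for global $L^2$-stability and the reiteration bookkeeping explicitly, which the paper leaves implicit.
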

\begin{proof} 
The assumed $L^2$-stability and the stability proved in Theorem~\ref{thm:stability-discontinuous} 
imply the result using the reinterpolation theorem (see, e.g., 
\cite[Chap.~{26}]{tartar07}) to observe 
$B^{\theta (m-1/2)}_{2,q}(\Omega) = (L^2(\Omega), B^{m-1/2}_{2,\infty}(\Omega))_{\theta,q}$ (with equivalent norms). 
\end{proof}

Furthermore, Corollary~\ref{cor:stability} allows one to assert that the interpolating 
between the discrete space $S^{p,m-1}(\TT)$ equipped with the $L^2$-norm and the $H^s$-norm yields
the same space equipped with the $H^{s\theta}$-norm.

\begin{corollary}
\label{cor:interpolation} 
Let $m \in \{1,2\}$, $q \in [1,\infty]$, and $\theta \in (0,1)$. Then, there holds 
\begin{align*}
\left( (S^{p,m-1}(\TT), \|\cdot\|_{L^2(\Omega)}),  
       (S^{p,m-1}(\TT), \|\cdot\|_{B^{m-1/2}_{2,\infty}(\Omega)})
\right)_{\theta,q} = 
       (S^{p,m-1}(\TT), \|\cdot\|_{B^{\theta( m-1/2)}_{2,q}(\Omega)})
\end{align*}
with equivalent norms. The norm equivalence constants depend only on $\Omega$, $p$, $q$, $\theta$, 
and the $\gamma$-shape regularity of $\TT$. 
More generally, for any $B^{m'-1/2}_{2,q'}(\Omega)$ with $1/2 < m' < m$ and $q' \in [1,\infty]$, there holds, 
with equivalent norms,   
\begin{align*}
\left( (S^{p,m-1}(\TT), \|\cdot\|_{L^2(\Omega)}),  
       (S^{p,m-1}(\TT), \|\cdot\|_{B^{m'-1/2}_{2,q'}(\Omega)})
\right)_{\theta,q} = 
       (S^{p,m-1}(\TT), \|\cdot\|_{B^{\theta(m'-1/2)}_{2,q}(\Omega)}). 
\end{align*}
\end{corollary}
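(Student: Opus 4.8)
The plan is to deduce Corollary~\ref{cor:interpolation} directly from Corollary~\ref{cor:stability} together with the standard fact that a \emph{complemented} (here even \emph{uniformly bounded projected}) subspace of an interpolation couple inherits the interpolation structure. Concretely, let $X_0 := (S^{p,m-1}(\TT),\|\cdot\|_{L^2(\Omega)})$ and $X_1 := (S^{p,m-1}(\TT),\|\cdot\|_{B^{m-1/2}_{2,\infty}(\Omega)})$, viewed as subspaces of the ambient couple $Y_0 := L^2(\Omega)$, $Y_1 := B^{m-1/2}_{2,\infty}(\Omega)$. The operator $\II$ is, by Assumption~\ref{assumption:I-discontinuous} and Theorem~\ref{thm:stability-discontinuous}, a bounded projection $Y_i \to X_i$ for $i=0,1$ (it reproduces piecewise polynomials for $m=1$ by $L^2$-orthogonality, and for $m=2$ it is a projection onto $S^{p,1}(\TT)$ as recorded after \eqref{eq:averaging-egal}; in either case $\II$ fixes $S^{p,m-1}(\TT)$ pointwise). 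The inclusion $X_i \hookrightarrow Y_i$ is trivially bounded. Hence $\II$ and the inclusion form a \emph{retract} pair of the two couples.

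The key step is then the retract lemma for real interpolation: if $\iota:\vec X \to \vec Y$ and $\pi:\vec Y\to\vec X$ are bounded morphisms of interpolation couples with $\pi\circ\iota = \mathrm{id}_{\vec X}$, then $(X_0,X_1)_{\theta,q} = (Y_0,Y_1)_{\theta,q}\cap (X_0+X_1)$ with equivalent norms, and in fact $(X_0,X_1)_{\theta,q}$ is the image of $(Y_0,Y_1)_{\theta,q}$ under $\pi$ with equivalent norms (see, e.g., \cite[Chap.~{26}]{tartar07} or the discussion of retracts in interpolation theory). I would apply this with $(Y_0,Y_1)_{\theta,q} = B^{\theta(m-1/2)}_{2,q}(\Omega)$ by the reinterpolation theorem already invoked in Corollary~\ref{cor:stability}. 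This gives that the norm of $(X_0,X_1)_{\theta,q}$ is equivalent, on $S^{p,m-1}(\TT)$, to $\|\cdot\|_{B^{\theta(m-1/2)}_{2,q}(\Omega)}$, which is the first claimed identity.

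For the more general statement with $B^{m'-1/2}_{2,q'}(\Omega)$, $1/2 < m' < m$, I would first observe that $B^{m'-1/2}_{2,q'}(\Omega)$ is itself an interpolation space of the couple $(L^2(\Omega), B^{m-1/2}_{2,\infty}(\Omega))$ — indeed $B^{m'-1/2}_{2,q'}(\Omega) = (L^2(\Omega), B^{m-1/2}_{2,\infty}(\Omega))_{\eta,q'}$ with $\eta = (m'-1/2)/(m-1/2)\in(0,1)$, again by reinterpolation. Then $\II$ is bounded $B^{m'-1/2}_{2,q'}(\Omega)\to S^{p,m-1}(\TT)$ by Corollary~\ref{cor:stability} (with $\theta$ there equal to $\eta$ and $q$ there equal to $q'$), so the same retract argument applies verbatim to the couple $X_0 := (S^{p,m-1}(\TT),\|\cdot\|_{L^2})$, $X_1' := (S^{p,m-1}(\TT),\|\cdot\|_{B^{m'-1/2}_{2,q'}})$ sitting inside $Y_0 := L^2(\Omega)$, $Y_1' := B^{m'-1/2}_{2,q'}(\Omega)$. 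Its $(\cdot,\cdot)_{\theta,q}$ interpolation space has norm equivalent to $\|\cdot\|_{(L^2, B^{m'-1/2}_{2,q'})_{\theta,q}}$, and a final application of reinterpolation identifies $(L^2(\Omega), B^{m'-1/2}_{2,q'}(\Omega))_{\theta,q} = B^{\theta(m'-1/2)}_{2,q}(\Omega)$, completing the proof.

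The only genuine subtlety — and the place I would be most careful — is the precise form of the retract lemma and the fact that all equivalence constants are controlled by $\Omega$, $d$, $p$, $q$, $\theta$, and the $\gamma$-shape regularity of $\TT$ alone: this is exactly where one uses that the projection bounds for $\II$ from Theorem~\ref{thm:stability-discontinuous} and Corollary~\ref{cor:stability} depend only on these quantities, and that the reinterpolation constants depend only on $\theta$, $q$ and the ambient spaces. Everything else is a formal consequence of the interpolation machinery; no hard analysis beyond what is already in the paper is needed.
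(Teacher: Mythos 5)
Your overall route --- realize $S^{p,m-1}(\TT)$ as a retract of an ambient couple via a projection satisfying Assumption~\ref{assumption:I-discontinuous} and then invoke reinterpolation --- is exactly the paper's route (the paper packages the retract step as the abstract result of Arioli--Loghin). However, there is a genuine gap in your treatment of the first identity: you claim that $\II$ is bounded $Y_1\to X_1$ with $Y_1=B^{m-1/2}_{2,\infty}(\Omega)$, i.e.\ that $\|\II u\|_{B^{m-1/2}_{2,\infty}(\Omega)}\lesssim\|u\|_{B^{m-1/2}_{2,\infty}(\Omega)}$ with a constant controlled by shape regularity only, and you attribute this to Theorem~\ref{thm:stability-discontinuous}. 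That theorem gives only the mapping property $H^{m-1/2}(\Omega)\to B^{m-1/2}_{2,\infty}(\Omega)$ on general shape-regular meshes; the endpoint self-stability \eqref{eq:thm:stability-discontinuous-ii} is proved only for \emph{quasi-uniform} meshes, and the example following the proof of Theorem~\ref{thm:stability-discontinuous} shows that one cannot expect such endpoint stability uniformly on strongly graded meshes. Since the corollary asserts equivalence constants depending only on $\Omega$, $p$, $q$, $\theta$ and the $\gamma$-shape regularity, the symmetric retract lemma as you state it does not apply: the bound on $\pi:Y_1\to X_1$ is precisely the unavailable ingredient, and this is the step where, as you yourself note, the uniformity of the constants must be checked.

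The gap is repairable, but it requires an asymmetric variant of your argument rather than the retract lemma verbatim. The nontrivial direction is to bound the $K$-functional $K_V$ of the discrete couple: for $v\in S^{p,m-1}(\TT)$ and $t>0$, take a near-optimal decomposition $v=v_0+v_1$ for the couple $\bigl(L^2(\Omega),H^{m-1/2}(\Omega)\bigr)$ (not for $\bigl(L^2(\Omega),B^{m-1/2}_{2,\infty}(\Omega)\bigr)$), and apply $\II$, which fixes $v$; Assumption~\ref{assumption:I-discontinuous}\,\eqref{item:assumption:I-discontinuous-ii} controls $\|\II v_0\|_{L^2(\Omega)}$ and \eqref{eq:thm:stability-discontinuous-i} controls $\|\II v_1\|_{B^{m-1/2}_{2,\infty}(\Omega)}$ by $\|v_1\|_{H^{m-1/2}(\Omega)}$. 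This yields $K_V(t,v)\lesssim K\bigl(t,v;L^2(\Omega),H^{m-1/2}(\Omega)\bigr)$, and the reinterpolation theorem identifies both $\bigl(L^2(\Omega),H^{m-1/2}(\Omega)\bigr)_{\theta,q}$ and $\bigl(L^2(\Omega),B^{m-1/2}_{2,\infty}(\Omega)\bigr)_{\theta,q}$ with $B^{\theta(m-1/2)}_{2,q}(\Omega)$, while the converse inequality is the trivial inclusion you already have. Your treatment of the second, more general identity is fine as written, because there the upper endpoint $B^{m'-1/2}_{2,q'}(\Omega)$ is an intermediate space and Corollary~\ref{cor:stability} does supply the needed stability; only the endpoint case $B^{m-1/2}_{2,\infty}(\Omega)$ needs the detour through $H^{m-1/2}(\Omega)$.
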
 
\begin{proof}
The proof follows from the existence of projection operators as presented in
\cite{arioli-loghin09}. 
One needs a (stable) projection onto $S^{p,m-1}(\TT)$ satisfying Assumption~\ref{assumption:I-discontinuous},
then Corollary~\ref{cor:stability} also provides the needed stability in the Besov-spaces.

For $m = 1$, 
one may simply use the $L^2$-projection, which trivially satisfies Assumption~\ref{assumption:I-discontinuous}. 
For $m=2$, one employs the Scott-Zhang operator $I^{SZ}$ of \cite{scott-zhang90} without treating the boundary 
in a special way as it is done there. Then, $I^{SZ}$ satisfies Assumption~\ref{assumption:I-discontinuous} 
by, e.g., \cite[Sec.~4.8]{brenner-scott02}. 
\end{proof}

\section{Multilevel decomposition based on NVB mesh-hierarchy} 
\label{sec:NVBproof}

In this section, we use Theorem~\ref{thm:stability-discontinuous}, or more precisely 
Corollary~\ref{cor:interpolation}, to prove the norm equivalence for the multilevel decomposition
of Theorem~\ref{thm:multilevel-fcc}.
Before we come to the proof, we mention some properties of the finest common coarsening and show that 
the adapted Scott-Zhang operators of Definition~\ref{def:adapted-SZ} for the finest common coarsening 
of an NVB mesh and a uniform mesh coincides with the adapted Scott-Zhang operator for the uniform mesh 
for piecewise polynomials on the NVB mesh.

\subsection{Properties of the finest common coarsening ($\fcc$)}

We recall the definition of the finest common coarsening
\begin{align*}
& \fcc(\TT,\TT^\prime) := \\
\nonumber 
& \qquad \underbrace{ \{T \in \TT\,:\, \exists T' \in \TT^\prime \mbox{ s.t. }  T' \subsetneq T\}}_{=:\TB_1}
\cup \underbrace{ \{T^\prime \in \TT^\prime\,:\, \exists T \in \TT \mbox{ s.t. }  T \subsetneq T^\prime\}}_{=:\TB_2} 
\cup \underbrace{ (\TT \cap \TT^\prime) }_{=:\TB_3}. 
\end{align*}

The following Lemma~\ref{lemma:fcc} shows that the finest common coarsening of two NVB meshes obtained from the same 
coarse regular triangulation is indeed a regular triangulation. 
\begin{lemma}
\label{lemma:fcc}
Let $\TT$, $\TT^\prime$ be NVB-refinements of the same common
triangulation $\widehat\TT_0$ of $\Omega$. Then: 
\begin{enumerate}[(i)]
\item
\label{item:lemma:fcc-0}
$\fcc(\TT,\TT^\prime) = 
\fcc(\TT^\prime,\TT)$. 
The three sets $\TB_1$, $\TB_2$, $\TB_3$ in the definition of $\fcc(\TT,\TT^\prime)$ are pairwise disjoint. 
\item 
\label{item:lemma:fcc-i}
$\fcc(\TT,\TT^\prime)$ consists of simplices that cover $\Omega$. 
\item
\label{item:lemma:fcc-ii}
If $\TT$ and $\TT^\prime$ are regular triangulations, then 
$\fcc(\TT,\TT^\prime)$ is a regular triangulation of $\Omega$. 
\end{enumerate}
\end{lemma}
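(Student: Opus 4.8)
The plan rests on one structural property of newest vertex bisection that I would record first, citing \cite{KPP13} for $d=2$ and \cite{stevenson08} for $d\ge 3$: the elements of any NVB-refinement of $\widehat\TT_0$ are nodes of the fixed bisection forest rooted in the elements of $\widehat\TT_0$, so for every $T\in\TT$ and $T'\in\TT'$ either $T\subseteq T'$, or $T'\subseteq T$, or $|T\cap T'|=0$; in particular, distinct elements of a single NVB mesh have pairwise disjoint interiors and are never nested. Everything below is an elementary consequence of this \emph{nesting property} together with the regularity of $\TT$ and $\TT'$. Part~(\ref{item:lemma:fcc-0}) is the warm-up: symmetry $\fcc(\TT,\TT')=\fcc(\TT',\TT)$ holds because interchanging the two meshes swaps $\TB_1\leftrightarrow\TB_2$ and fixes $\TB_3$; for pairwise disjointness I would check the three cases $\TB_1\cap\TB_3$, $\TB_2\cap\TB_3$, $\TB_1\cap\TB_2$ and observe that a simplex $S$ in two of the sets lies in both $\TT$ and $\TT'$ while the defining witness of whichever of $\TB_1,\TB_2$ it belongs to produces a proper sub- or superset of $S$ inside the \emph{same} mesh as $S$ — two distinct nested elements of one NVB mesh, contradicting the nesting property.

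For part~(\ref{item:lemma:fcc-i}), the elements of $\fcc(\TT,\TT')$ are open simplices because they come from $\TT$ or $\TT'$, so only covering needs checking. Given $x\in\Omega$, pick $T'\in\TT'$ with $x\in\overline{T'}$ and inspect the elements of $\TT$ meeting $T'$ in positive measure: by nesting each is $\subseteq T'$ or $\supseteq T'$. If one contains $T'$ (possibly equals it), then $T'$, resp.\ that larger element, lies in $\TB_3$, resp.\ $\TB_1$, and $x$ is in its closure; otherwise all of them lie strictly inside $T'$, and since $\TT$ is a finite triangulation covering $\Omega$ we get $\overline{T'}=\bigcup\{\overline T:T\in\TT,\ T\subsetneq T'\}$, which exhibits $T'\in\TB_2$ and contains $x$. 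This same computation yields the fact I reuse later: every element of $\TB_2$ is a union of elements of $\TT$, and symmetrically every element of $\TB_1$ is a union of elements of $\TT'$.

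Part~(\ref{item:lemma:fcc-ii}) is the substantial one. Let $S_1\ne S_2$ be elements of $\fcc(\TT,\TT')$. If both lie in the same mesh, disjointness of interiors and the face condition follow from the regularity of that mesh; using part~(\ref{item:lemma:fcc-0}) I reduce the remaining case to $S_1\in\TB_1$, $S_2\in\TB_2$. There, $|S_1\cap S_2|>0$ would force $S_1\subseteq S_2$ or $S_2\subseteq S_1$ by nesting, which combined with the witnesses in the definitions of $\TB_1,\TB_2$ again produces two distinct nested elements of $\TT$ or of $\TT'$ — impossible; hence $S_1,S_2$ have disjoint interiors and are incomparable. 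For the face condition I use the reuse fact: $\overline{S_2}=\bigcup\{\overline T:T\in\TT,\ T\subseteq S_2\}$, so $\{S_1\}\cup\{T\in\TT:T\subseteq S_2\}$ is a family of elements of the \emph{regular} mesh $\TT$ with $S_1$ distinct from each; thus $\overline{S_1}\cap\overline{S_2}=\bigcup_{T\subseteq S_2}(\overline{S_1}\cap\overline T)$ is a union of proper faces of the simplex $S_1$, in particular contained in $\partial S_1$, and symmetrically a union of proper faces of $S_2$. Since $\overline{S_1}\cap\overline{S_2}$ is convex (an intersection of two closed simplices), I finish with the elementary polytope fact that a nonempty convex subset of the boundary of a simplex lies in a unique minimal proper face, whose relative interior — being contained in our union of faces — forces that face itself to appear in the union; hence $\overline{S_1}\cap\overline{S_2}$ equals a single face of $S_1$, and likewise of $S_2$, i.e.\ a common subsimplex.

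The step I expect to be the real obstacle, as opposed to the bookkeeping in (\ref{item:lemma:fcc-0})--(\ref{item:lemma:fcc-i}), is precisely this face condition: a priori an element $S_2\in\TB_2$ could meet a face of $S_1\in\TB_1$ in a hanging-node configuration. What makes it work is the observation recorded in the second paragraph — every element of $\fcc(\TT,\TT')$ is simultaneously a union of elements of $\TT$ and a union of elements of $\TT'$ — so that the conformity of $\TT$ and of $\TT'$ individually is enough to conclude, the only genuinely geometric input being the small convex-geometry lemma about faces of a simplex.
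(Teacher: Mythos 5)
Your proposal is correct, and for the substantial part~(\ref{item:lemma:fcc-ii}) it takes a genuinely different route from the paper. Parts~(\ref{item:lemma:fcc-0}) and~(\ref{item:lemma:fcc-i}) coincide in substance with the paper's argument: both rest on the forest/nesting property of NVB (any $T\in\TT$, $T'\in\TT'$ are nested or have disjoint interiors, and elements of one mesh are never nested), and the disjointness and covering arguments are the same bookkeeping. For the conformity in~(\ref{item:lemma:fcc-ii}), the paper argues differently: it invokes, without proof, the NVB-specific fact that for $T\in\TB_1$, $T'\in\TB_2$ the intersection $\overline T\cap\overline{T'}$ is already a \emph{full} $j$-face of at least one of the two elements, and then derives a contradiction (nested $j$-faces $f_1'\subset f\subsetneq f'$ of two elements of $\TT'$, using that $T$ is subdivided by $\TT'$) to show it is a full face of the other as well. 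You instead prove and exploit the decomposition fact that every element of $\TB_1$ (resp.\ $\TB_2$) is a union of closures of elements of $\TT'$ (resp.\ $\TT$), so that $\overline{S_1}\cap\overline{S_2}$ is simultaneously a finite union of proper faces of $S_1$ (by conformity of $\TT$) and of $S_2$ (by conformity of $\TT'$), and you conclude with the convexity argument that a nonempty convex union of faces of a simplex is a single face. This buys a more self-contained proof: you need only the bisection-forest structure, the regularity of the two meshes separately, and an elementary polytope lemma, whereas the paper's shortcut leans on an unproved alignment property of NVB elements; the price is the extra convex-geometry step. One small wording caveat there: what you actually need (and what is true) is that a convex set meets the \emph{relative interior} of its unique minimal containing face, and that a face containing a relative-interior point of another face contains that whole face — your phrase ``whose relative interior being contained in our union of faces'' states slightly more than is available before the conclusion is drawn, but the intended argument is standard and closes the gap.
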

\begin{proof}
\emph{Proof of (\ref{item:lemma:fcc-0}):} The symmetry of $\fcc$ is obvious. 
To see that the sets $\TB_1$, $\TB_2$, $\TB_3$ are pairwise disjoint, let 
$T \in \TB_1$. Then $T \in \TT$ but not in $\TT^\prime$. Hence, 
$T \not\in \TB_2$ and $T \not\in \TB_3$. By symmetry, $T \in \TB_2$ also 
implies $T \not\in\TB_1$ and $T \not\in \TB_3$. Finally, if 
$T \in \TB_3$, then it cannot be in $\TB_1$ or $\TB_2$. 

\emph{Proof of (\ref{item:lemma:fcc-i}):}
Let $x \in \Omega$ (but not on the skeleton of $\TT$ or $\TT^\prime$). 
Since $\TT$, $\TT^\prime$ cover $\Omega$, there are $T \in \TT$ and 
$T^\prime \in \TT^\prime$ with $x \in T$, $x \in T^\prime$. 
Since both $T$ and $T^\prime$ are obtained by NVB and $T \cap T^\prime \ne \emptyset$, we must
have $T = T^\prime$ or $T \subsetneq T^\prime$ or $T^\prime  \subsetneq T$. In the first 
case $T = T^\prime \in \TB_3$, in the second one $T^\prime \in \TB_2$, and in the third
one $T \in \TB_1$. Hence, $x$ is in an element of $\fcc(\TT,\TT^\prime)$. 

\emph{Proof of (\ref{item:lemma:fcc-ii}):}
Let $T$, $T^\prime$ be two elements of $\fcc(\TT,\TT^\prime)$ with 
$f:= \overline{T} \cap \overline{T^\prime} \ne \emptyset$. We have to show 
that for some $j$, the intersection 
$\overline{T} \cap \overline{T^\prime} \ne \emptyset$ is a full $j$-face 
of both $T$ and $T^\prime$. If both $T$, $T^\prime$ are in $\TT$ (or both are in $\TT^\prime$),
then, by the regularity of $\TT$ (or the regularity of $\TT^\prime$), their intersection is 
indeed a \emph{full} $j$-face of either element. 
Assume therefore $T\in \TT$ and $T^\prime \in \TT^\prime$. 
Since $T$, $T^\prime \in \fcc(\TT,\TT^\prime)$, we obtain $T \in \TB_1$ and $T^\prime \in \TB_2$. 
Since both $T$ and $T^\prime$ are created by NVB from the same initial triangulation, 
the intersection $f = \overline{T} \cap \overline{T^\prime}$ is a full $j$-face of either $T$ or $T^\prime$. 

Let us assume that $f$ is a full $j$-face of $T$, and, by contradiction,  
that $f$ is not a full $j$-face of $T^\prime$. 
Then, $f$ is a proper subset of a $j$-face $f^\prime$ of $T^\prime$. Since $T \in \TB_1$, it contains 
elements of $\TT^\prime$. Hence, there is an element $T^\prime_1 \in \TT^\prime$ with 
$T^\prime_1 \subset T$ that has a $j$-face $f_1^\prime$ with $f_1^\prime \subset f$. Thus, 
we have found elements $T^\prime$, $T^\prime_1 \in \TT^\prime$ with $j$-faces 
$f_1^\prime \subset f \subsetneq f^\prime$, contradicting the regularity of $\TT^\prime$. 
Hence, $f$ is also a full $j$-face of $T^\prime$. Thus, $\fcc(\TT,\TT^\prime)$ is a regular triangulation. 

%
%
\end{proof}

A completion of an (NVB-generated) mesh is any NVB-refinement of it that is regular. 
We next show that the minimal completion is unique. 
\begin{lemma}
\label{lemma:minimal-complection-unique}
Let $\TT$ be a NVB-refinement of $\widehat \TT_0$ and let $\TT_1$, $\TT_2$ be two 
completions of $\TT$. Then $\fcc(\TT_1, \TT_2)$ is a completion of 
$\TT$. The completion of minimal cardinality is unique. 
\end{lemma}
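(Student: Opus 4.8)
The statement has two parts: (a) $\fcc(\TT_1,\TT_2)$ is again a completion of $\TT$ (i.e.\ it is a regular NVB-refinement of $\TT$), and (b) the completion of $\TT$ of minimal cardinality is unique. The plan is to obtain (b) from (a) by a counting argument, so the heart of the matter is (a). For (a), I would first invoke Lemma~\ref{lemma:fcc}\eqref{item:lemma:fcc-ii}: since $\TT_1$, $\TT_2$ are regular NVB-refinements of the common mesh $\widehat\TT_0$, the mesh $\TT':=\fcc(\TT_1,\TT_2)$ is a regular triangulation and is itself an NVB-refinement of $\widehat\TT_0$. What remains is to show that $\TT'$ is a \emph{refinement of $\TT$} and is \emph{coarser than} (or equal to) each $\TT_i$, so that it lies ``between'' $\TT$ and the $\TT_i$ in the NVB refinement tree.

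\textbf{Key steps for (a).} First I would record the elementary fact that for NVB meshes obtained from the same root $\widehat\TT_0$, any two elements that overlap are nested (one contained in the other), exactly as used in the proof of Lemma~\ref{lemma:fcc}\eqref{item:lemma:fcc-i}; consequently ``$\TT_a$ is a refinement of $\TT_b$'' is equivalent to ``every element of $\TT_b$ is a union of elements of $\TT_a$,'' and the refinement relation is a partial order on NVB-descendants of $\widehat\TT_0$. Second, by the very definition of $\fcc$, every element of $\TT' = \TB_1\cup\TB_2\cup\TB_3$ is an element of $\TT_1$ or of $\TT_2$, and it is not properly contained in any element of the other mesh; from the nestedness dichotomy this gives that each element of $\TT_i$ is contained in some element of $\TT'$, i.e.\ $\TT_i$ is a refinement of $\TT'$ for $i=1,2$. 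Third, I must show $\TT'$ is a refinement of $\TT$: take $T\in\TT$; since $\TT_1$ refines $\TT$, $T$ is a union of elements of $\TT_1$, and likewise of elements of $\TT_2$; I then argue that $\overline T$ is a union of elements of $\TT'$ by picking any $x\in T$ off the skeletons, finding the elements $T^{(1)}\in\TT_1$, $T^{(2)}\in\TT_2$ containing it, noting $T^{(1)},T^{(2)}\subseteq T$ and that the larger of the two (they are nested) lies in $\TT'$ (it cannot be in $\TB_1$ or $\TB_2$ as a \emph{proper} subelement of the other, so it is the $\fcc$-element at $x$), hence the $\fcc$-element at $x$ is $\subseteq T$; since this holds for a.e.\ $x\in T$ and $\TT'$ is a regular triangulation, $T$ is exactly a union of $\TT'$-elements. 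Thus $\TT \preceq \TT' \preceq \TT_i$ in the refinement order, and $\TT'$ is a completion of $\TT$.

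\textbf{Key steps for (b).} For uniqueness of the minimal completion, suppose $\TT_1$ and $\TT_2$ are both completions of $\TT$ of minimal cardinality. By part (a), $\TT':=\fcc(\TT_1,\TT_2)$ is a completion of $\TT$ with $\TT'\preceq\TT_1$ and $\TT'\preceq\TT_2$. Since refining an NVB mesh never decreases the number of elements (each bisection strictly increases cardinality), $\#\TT' \le \#\TT_1$ and $\#\TT'\le\#\TT_2$; by minimality of $\TT_1,\TT_2$ we get $\#\TT' = \#\TT_1 = \#\TT_2$. But $\TT'\preceq\TT_1$ together with $\#\TT'=\#\TT_1$ forces $\TT' = \TT_1$ (a proper refinement would be strictly larger), and likewise $\TT'=\TT_2$; hence $\TT_1=\TT_2$.

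\textbf{Expected main obstacle.} The routine-but-delicate point is the third step of (a): verifying carefully that the $\fcc$ of two refinements of $\TT$ is still a refinement of $\TT$ — i.e.\ that coarsening ``as much as possible while staying below both $\TT_1$ and $\TT_2$'' cannot coarsen past $\TT$. The clean way is the pointwise argument above, using that overlapping NVB-elements are nested and that an element of $\TT_i$ contained in a larger element of the other mesh never survives into $\TB_1\cup\TB_2$; one must be a little careful that the element landing in $\TT'$ at a given point is genuinely $\subseteq T$, which follows because both $T^{(1)}$ and $T^{(2)}$ are $\subseteq T$ and $\TT'$ picks (up to equality) the larger of the two. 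No essentially new idea beyond Lemma~\ref{lemma:fcc} is needed; everything rests on the nestedness dichotomy for NVB meshes and monotonicity of cardinality under refinement.
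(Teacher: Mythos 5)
Your proposal is correct and follows essentially the same route as the paper: form $\fcc(\TT_1,\TT_2)$, use Lemma~\ref{lemma:fcc} for regularity together with the fact that every $\fcc$-element belongs to $\TT_1$ or $\TT_2$ (hence lies in an element of $\TT$), and then conclude uniqueness of the minimal completion by a cardinality argument. Your execution is somewhat more elaborate than necessary --- the paper gets the refinement property of $\fcc(\TT_1,\TT_2)$ over $\TT$ immediately from membership in $\TT_1\cup\TT_2$, and gets uniqueness from the observation that $\TT_1\neq\TT_2$ forces $\operatorname{card}\fcc(\TT_1,\TT_2)\le N-1$ --- but these are only differences in detail, not in approach.
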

\begin{proof}
Let $\TT_3:= \fcc(\TT_1,\TT_2)$. 
We claim that $\TT_3$ is a completion of $\TT$. Since $\TT_3$ is regular
by Lemma~\ref{lemma:fcc}, we have to assert that each element of $\TT_3$ is 
contained in an element of $\TT$. Suppose not. Then there is $T_3 \in \TT_3$ 
and a $T \in \TT$ with $T \subsetneq T_3$. (We use that these meshes are obtained
by NVB from a common $\TT_0$.). By definition, $T_3$ is either in 
$\TT_1$ or $\TT_2$, which are both completions of $\TT$, i.e., their elements
are contained in elements of $\TT$. This is a contradiction. 

To see the uniqueness of the minimal completion, let $\TT_1 \ne \TT_2$ be 
two completions of minimal cardinality $N$. Note that $\TT_3:= \fcc(\TT_1,\TT_2)$
is also a completion. However, in view of $\TT_1 \ne \TT_2$, at least one element
of $\TT_1$ is a refinement of an element of $\TT_2$ so that 
we have by definition of $\fcc(\TT_1, \TT_2)$ that 
$\operatorname{card} \TT_3 \leq N-1$, which contradicts the minimality.  
\end{proof}

\begin{lemma} 
\label{lemma:chopping}
Let $\widehat\TT_\ell$, $\ell=0,1,\ldots,$ be a sequence of uniform refinements of a regular mesh $\widehat \TT_0$
and $\widetilde \TT_\ell = \fcc(\TT, \widehat\TT_\ell)$. Then:
\begin{enumerate}[(i)]
\item 
\label{item:lemma:chopping-ii}
If $T \in \Tt_\ell \cap \TT$ then $T \in \Tt_{\ell+m}$ for all $m \ge 0$. 
\item 
\label{item:lemma:chopping-iii}
If $T \in \Tt_\ell\setminus \TT$ then $T \not\in \Tt_{\ell+1}$.
\item 
\label{item:lemma:chopping-iv}
Denote by $\widetilde{\mathcal N}_\ell^1$ the set of nodes of $\widetilde\TT_\ell$. 
Then $\widetilde{\mathcal N}_{\ell+1}^1 \supset \widetilde {\mathcal N}_{\ell}^1$ for all $\ell$. 
\item 
\label{item:lemma:chopping-v}
Let $\widetilde{\mathcal M}_\ell^1 = \widetilde{\mathcal N}_\ell^1\setminus
\widetilde{\mathcal N}_{\ell-1}^1 \cup 
\{z \in \widetilde{\mathcal N}_\ell^1 \cap \widetilde{\mathcal N}_{\ell-1}^1\,|\, 
\omega_{\ell-1}(z) \subsetneq \omega_\ell(z)\}$. Then, we have  
$\operatorname{card} \widetilde{\mathcal M}_\ell^1 \leq C 
\operatorname{card} \widetilde{\mathcal N}_\ell^1\setminus\widetilde{\mathcal N}_{\ell-1}^1$
for a $C > 0$ depending only on the shape regularity of the triangulations. 
\end{enumerate}
\end{lemma}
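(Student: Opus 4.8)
The plan is to read off (ii) and (iii) directly from the definition \eqref{eq:def-fcc} of $\fcc$, to deduce (iv) from the stronger fact that $\Tt_{\ell+1}$ refines $\Tt_\ell$, and to obtain (v) by a charging argument based on $\gamma$-shape regularity. Throughout I invoke Lemma~\ref{lemma:fcc}: the sets $\TB_1,\TB_2,\TB_3$ in \eqref{eq:def-fcc} are pairwise disjoint, each $\Tt_m$ is a regular (hence conforming) triangulation of $\Omega$, and two NVB-refinements of $\widehat\TT_0$ whose closures overlap are nested or equal.

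\emph{Parts (ii), (iii).} Let $T\in\Tt_\ell\cap\TT$. Then $T\in\TB_1\cup\TB_3$ (because $\TB_2\subseteq\widehat\TT_\ell$ and $\TB_2\cap\TT=\emptyset$ by disjointness of $\TB_2$ and $\TB_3$), so no element of $\widehat\TT_\ell$ is a proper superset of $T$; this remains true for the uniform refinement $\widehat\TT_{\ell+m}$, and in addition $\widehat\TT_{\ell+m}$ contains an element which is a subset of $T$, a proper subset once $m\ge 1$. Hence $T$ lies in $\TB_1\cup\TB_3$ relative to $\fcc(\TT,\widehat\TT_{\ell+m})$, i.e.\ $T\in\Tt_{\ell+m}$, which is (ii). For (iii), if $T\in\Tt_\ell\setminus\TT$ then $T\notin\TB_1\cup\TB_3\subseteq\TT$, so $T\in\TB_2$; thus $T\in\widehat\TT_\ell$ and $T$ properly contains an element of $\TT$. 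Since $\widehat\TT_{\ell+1}$ is the uniform refinement of $\widehat\TT_\ell$ we have $T\notin\widehat\TT_{\ell+1}$, and combined with $T\notin\TT$ this excludes $T$ from all three sets defining $\fcc(\TT,\widehat\TT_{\ell+1})$, so $T\notin\Tt_{\ell+1}$.

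\emph{Part (iv).} I first show that $\Tt_{\ell+1}$ refines $\Tt_\ell$, i.e.\ every $S\in\Tt_{\ell+1}$ is contained in an element of $\Tt_\ell$, distinguishing $S\in\TT$ and $S\in\widehat\TT_{\ell+1}\setminus\TT$. If $S\in\TT$, choose $R\in\Tt_\ell$ with $S\cap R\neq\emptyset$; nesting of overlapping NVB elements makes $R$, $S$ nested or equal, and regularity of $\TT$ rules out $R\subsetneq S$ (this would put a proper $\TT$-refinement of $R$ strictly inside $S\in\TT$), hence $S\subseteq R$. If $S\in\widehat\TT_{\ell+1}\setminus\TT$, then $S\in\TB_2$ for $\fcc(\TT,\widehat\TT_{\ell+1})$, so $S$ properly contains some $\widetilde T\in\TT$; as $\widehat\TT_{\ell+1}$ refines $\widehat\TT_\ell$ there is $\widehat T\in\widehat\TT_\ell$ with $S\subsetneq\widehat T$, whence $\widetilde T\subsetneq\widehat T$, i.e.\ $\widehat T\in\TB_2$ for $\fcc(\TT,\widehat\TT_\ell)$ and $S\subseteq\widehat T\in\Tt_\ell$. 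Once $\Tt_{\ell+1}$ refines $\Tt_\ell$, every vertex of $\Tt_\ell$ is a vertex of $\Tt_{\ell+1}$: a vertex of $T'\in\Tt_\ell$ is an extreme point of $\overline{T'}$, which is covered by the finitely many $\Tt_{\ell+1}$-simplices contained in $T'$, so it is an extreme point, hence a vertex, of one of them. This gives (iv).

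\emph{Part (v).} By (iv), $\widetilde{\mathcal N}_{\ell-1}^1\subseteq\widetilde{\mathcal N}_\ell^1$, so $\widetilde{\mathcal M}_\ell^1=(\widetilde{\mathcal N}_\ell^1\setminus\widetilde{\mathcal N}_{\ell-1}^1)\cup X_\ell$ with $X_\ell:=\{z\in\widetilde{\mathcal N}_\ell^1\cap\widetilde{\mathcal N}_{\ell-1}^1:\ \omega_{\ell-1}(z)\neq\omega_\ell(z)\}$, and it suffices to show $\operatorname{card}X_\ell\lesssim\operatorname{card}(\widetilde{\mathcal N}_\ell^1\setminus\widetilde{\mathcal N}_{\ell-1}^1)$. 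For $z\in X_\ell$ the patch changes only if some $T\in\Tt_{\ell-1}$ with $z\in\overline T$ is properly refined on passing to $\Tt_\ell$, and such a refinement introduces a new vertex $z'\in\overline T\subseteq\overline{\omega_{\ell-1}(z)}$ with $z'\in\widetilde{\mathcal N}_\ell^1\setminus\widetilde{\mathcal N}_{\ell-1}^1$ (conformity of $\Tt_{\ell-1}$ forbids $z'$ from being a vertex of $\Tt_{\ell-1}$). Picking one such $z'=\Phi(z)$ for each $z\in X_\ell$ gives a map $\Phi:X_\ell\to\widetilde{\mathcal N}_\ell^1\setminus\widetilde{\mathcal N}_{\ell-1}^1$; any preimage $z$ of a fixed $z'$ is a vertex of the $\Tt_{\ell-1}$-element (or a neighbour of it) into whose closure $z'$ falls, and $\gamma$-shape regularity bounds the number of such $z$ by a constant, so $\Phi$ is boundedly many-to-one and the claim follows. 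The delicate points I expect to have to argue carefully are the nesting/refinement bookkeeping behind (iv) and, above all, making $\Phi$ in (v) well defined with an $O(1)$-sized fibre (in particular that a proper refinement of $T$ produces a genuinely new node of $\Tt_\ell$ located in $\overline{\omega_{\ell-1}(z)}$); the rest is routine from Lemma~\ref{lemma:fcc} and shape regularity.
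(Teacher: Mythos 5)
Your proposal is correct. For (ii), (iii) and (v) it is essentially the paper's own argument: (ii) and (iii) are read off from which of the disjoint sets $\TB_1,\TB_2,\TB_3$ the element belongs to at levels $\ell$ and $\ell+1$, and (v) is the same charging argument as in the paper (the paper charges changed-patch nodes to elements of $\Tt_{\ell-1}\setminus\Tt_\ell$ and those to new nodes; you compose the two maps and bound the fibres by $\gamma$-shape regularity, which amounts to the same thing). The genuine difference is (iv): the paper argues node by node, passing from $T\in\Tt_\ell\setminus\TT$ to a child $T'\in\Th_{\ell+1}$ having $z$ as a vertex and showing $T'\in\Tt_{\ell+1}$ (the dichotomy "$T'\in\TT$ or $T'\in\TB_{2,\ell+1}$" is justified because no NVB element lies strictly between a parent and its child), whereas you first prove the stronger structural fact that $\Tt_{\ell+1}$ refines $\Tt_\ell$ and then transfer vertices by the extreme-point argument. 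Your route is slightly longer but buys something real: the nestedness of the hierarchy $(\Tt_\ell)_\ell$ is exactly the property the paper later invokes without proof (e.g.\ "$\widetilde{\mathcal T}_k$ is a refinement of $\widetilde{\mathcal T}_m$", hence $w_k\le w_m$, in the proof of the strengthened Cauchy--Schwarz inequality), and it also handles uniformly the case distinction that the paper's terse proof of (iv) leaves implicit. One cosmetic remark: in (v) you replace the paper's strict inclusion between patches by $\omega_{\ell-1}(z)\neq\omega_\ell(z)$; since your refinement property gives $\omega_\ell(z)\subseteq\omega_{\ell-1}(z)$, this is harmless and in fact resolves the inconsistent orientation of the inclusion in the paper's statements.
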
 
\begin{proof}
For statement (\ref{item:lemma:chopping-ii}), 
we only show the case $m  =1$ as the general case follows by induction. 
we note that $T \in \Tt_\ell \cap \TT$ implies that $T \not\in \TB_{2,\ell}$, where 
$\TB_{i,\ell}$, $ \in \{1,2,3\}$ are the three sets given in (\ref{eq:def-fcc}). 
If $T \in \TB_{3,\ell}$, then $T \in \TB_{1,\ell+1}$. 
If $T \in \TB_{1,\ell}$, then, $T \in \TB_{1,\ell+1}$. 
For statement (\ref{item:lemma:chopping-iii}), we have $T \in \Th_{\ell}\setminus \TT$. Then, 
$T \not \in \Tt_{\ell+1}$. 

For statement (\ref{item:lemma:chopping-iv}), let $z \in \widetilde{\mathcal N}_\ell^1$
and $T \in \widetilde\TT_\ell$ be an element such that $z$ is a node of $T$. 
We consider two cases.  First, if $T \in \TT \cap \widetilde \TT_\ell$, then, by statement 
 (\ref{item:lemma:chopping-ii}), we have $T \in \TT_{\ell+1}$ so that $z \in \widetilde {\mathcal N}_{\ell+1}^1$. 
Second, let $T \in \widetilde \TT_\ell\setminus \TT$. Then $T \in \widehat\TT_\ell$ and in fact 
in $\TB_{2,\ell}$. The node $z$ is the node of an element $T' \in \widehat \TT_{\ell+1}$. This 
element $T'$ is either in $\TT$, which implies $z \in \widetilde{\mathcal N}_{\ell+1}^1$, or 
$T' \in \TB_{2,\ell+1}$, which also implies $z \in \widetilde{\mathcal N}_{\ell+1}^1$. 

For statement (\ref{item:lemma:chopping-v}) one observes that 
$\operatorname{card} \{z \in \widetilde{\mathcal N}_\ell^1\cap \widetilde{\mathcal N}_{\ell-1}^1\,|\, 
\omega_{\ell-1}(z) \subsetneq \omega_\ell(z)\} \lesssim
 \operatorname{card}\{T \in \widetilde \TT_{\ell-1}\,|\, T \not\in \widetilde\TT_\ell\} 
\lesssim \operatorname{card} \widetilde{\mathcal N}_\ell^1\setminus \widetilde {\mathcal N}_{\ell-1}^1$. 
\end{proof}

The following lemma shows that the adapted Scott-Zhang operators for the meshes $\Tt_\ell$ and $\Th_\ell$ 
coincide on piecewise polynomials on the mesh $\TT$. 
\begin{lemma}
\label{lemma:scott-zhang-tilde-hat}
Let $\TT$ be generated by NVB from $\Th_0$. 
Let  $\widetilde{I}^{SZ}_\ell:L^2(\Omega) \rightarrow S^{p,1}(\widetilde \TT_\ell)$ 
and $\widehat I^{SZ}_\ell:L^2(\Omega)\rightarrow S^{p,1}(\Th_\ell)$ be 
the Scott-Zhang operators defined in Definition~\ref{def:adapted-SZ}. 
Then, there holds
$$
\widetilde I^{SZ}_\ell u = \widehat I^{SZ}_\ell u 
\qquad \forall u \in S^{p,1}(\TT). 
$$
\end{lemma}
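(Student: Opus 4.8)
The plan is to compare the two operators term by term, using their common definition $\eqref{eq:SZdef}$ as a sum over nodes of Lagrange basis functions weighted by averaged nodal functionals. Fix $u \in S^{p,1}(\TT)$. The first observation is that the coefficient functionals in $\widetilde I^{SZ}_\ell$ and $\widehat I^{SZ}_\ell$ are, by $\eqref{eq:averaging-gleich-punktauswertung}$, evaluations $w \mapsto w(z)$ when applied to piecewise polynomials belonging to the space on whose mesh the averaging element lies. The key point of the construction in Definition~\ref{def:adapted-SZ} is that for $\widetilde I^{SZ}_\ell$ the averaging element $T_z$ is always chosen so that it lies in $\TT$ whenever the node $z$ lies in (the closure of) an element of $\Th_\ell$ that is a proper subset of an element of $\TT$; in all remaining cases the two operators use the \emph{same} averaging element (a common element of $\Th_\ell \cap \Tt_\ell$, chosen in step~(\ref{item:def-SZ-1a}) or~(\ref{item:def-SZ-1bii})). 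Hence the only nodes where the two operators could disagree are those handled in step~(\ref{item:def-SZ-1bi}), i.e., nodes $z$ on $\partial T$ for $T \in \Th_\ell \cap \Tt_\ell$ for which $\mathcal{A}(z,\Th_\ell)$ contains a $T' \in \Th_\ell$ with $T' \subsetneq \widetilde T$ for some $\widetilde T \in \TT$.

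**The core identity.** For such a node $z$, the operator $\widehat I^{SZ}_\ell$ uses the functional $v \mapsto \int_{T'} \varphi^\ast_{z,T'} v$, while $\widetilde I^{SZ}_\ell$ uses $v \mapsto \int_{\widetilde T} \varphi^\ast_{z,\widetilde T} v$. Here $\widetilde T \in \TT$ and $T' \subsetneq \widetilde T$, and $z \in \overline{T'} \subset \overline{\widetilde T}$. The essential reduction is: for $u \in S^{p,1}(\TT)$, the restriction $u|_{\widetilde T}$ is a single polynomial in $P_p(\widetilde T)$, hence also $u|_{T'} \in P_p(T')$ is that same polynomial restricted, and therefore
\begin{align*}
\int_{T'} \varphi^\ast_{z,T'} u \, dx = u(z) = \int_{\widetilde T} \varphi^\ast_{z,\widetilde T} u \, dx,
\end{align*}
both equalities being instances of $\eqref{eq:averaging-gleich-punktauswertung}$ applied to a polynomial on the respective element. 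So the coefficients agree at every node. Also the underlying Lagrange basis functions $\varphi_{z,\Tt_\ell}$ versus $\varphi_{z,\Th_\ell}$ need to be matched: one must check that the set of nodes actually used (and the basis functions attached to them) either coincide or that $\widehat I^{SZ}_\ell u \in S^{p,1}(\Tt_\ell)$, so that summing over the two node sets gives the same function; this follows because $u \in S^{p,1}(\TT)$ is being mapped and one can verify directly that the image lands in the coarser space $S^{p,1}(\Tt_\ell)$ by checking it is continuous and piecewise polynomial of degree $p$ on $\Tt_\ell$ — or, more cleanly, by noting that $\Tt_\ell$ and $\Th_\ell$ differ only on elements of $\Th_\ell$ that refine elements of $\TT$, and on those patches the relevant coefficients reconstruct the same polynomial.

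**The main obstacle.** I expect the delicate part to be the bookkeeping of \emph{which} averaging elements are selected when, in particular verifying that the "loop in any fixed order" in steps~(\ref{item:def-SZ-1})--(\ref{item:def-SZ-3}) of Definition~\ref{def:adapted-SZ} does not create a node that is assigned an averaging element for $\widehat I^{SZ}_\ell$ drawn from $\Th_\ell \setminus \Tt_\ell$ while the corresponding node for $\widetilde I^{SZ}_\ell$ gets one from $\Tt_\ell \setminus \Th_\ell$ in a \emph{mismatched} way — one has to argue that every node of $\Th_\ell$ lying strictly inside the refined region of some $\widetilde T \in \TT$ is already fixed in step~(\ref{item:def-SZ-1bi}) (because it sits on the boundary of some coarser element $T \in \Th_\ell \cap \Tt_\ell$, or is interior to such a $T$ and handled by~(\ref{item:def-SZ-1a})), so that steps~(\ref{item:def-SZ-2}) and~(\ref{item:def-SZ-3}) only touch nodes that live on elements present in \emph{exactly one} of the two meshes, where the other operator does not "see" that node at all for the purpose of evaluating on $u \in S^{p,1}(\TT)$. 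Establishing this requires using the $\fcc$-structure of Lemma~\ref{lemma:fcc} and the level/refinement properties from Lemma~\ref{lemma:chopping}, and is where most of the real work sits; once the node-assignment picture is pinned down, the coefficient identity above closes the argument immediately.
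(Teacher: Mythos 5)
Your core identity is the right mechanism and matches the paper's: by \eqref{eq:averaging-gleich-punktauswertung}, whenever the averaging element assigned to a node is contained in a single element of $\TT$, the functional applied to $u\in S^{p,1}(\TT)$ returns $u(z)$, and in the remaining cases of step~(\ref{item:def-SZ-1}) of Definition~\ref{def:adapted-SZ} the two operators share the same averaging element. However, there is a genuine gap in how you treat the region where $\Th_\ell$ and $\Tt_\ell$ differ. There the two operators are expansions in \emph{different} bases over \emph{different} node sets, so a node-by-node comparison of coefficients cannot close the argument; what is needed (and what the paper proves in its steps 2--4) is the function-level statement that on every $T\in\Th_\ell\setminus\Tt_\ell$ one has $(\widehat I^{SZ}_\ell u)|_T=u|_T$ and on every $T\in\Tt_\ell\setminus\Th_\ell$ one has $(\widetilde I^{SZ}_\ell u)|_T=u|_T$, so that both restrictions coincide with $u$ and hence with each other. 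Your proposal only gestures at this (``the relevant coefficients reconstruct the same polynomial'') and explicitly defers ``most of the real work''; but proving it requires checking \emph{every} node functional on such elements, in particular boundary nodes $z\in\partial T$ whose averaging element was already fixed in step~(\ref{item:def-SZ-1}) and lies in a neighboring element of $\Th_\ell\cap\Tt_\ell$. For those one must invoke the design rule that an averaging element contained in an element of $\TT$ is preferred whenever one is available (it is available here, since $T$ itself is properly contained in some $\widetilde T\in\TT$), to conclude $u|_{T_z}\in P_p(T_z)$ and hence $l_z(u)=u(z)$. This case is not addressed in your proposal at all.

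Moreover, the bookkeeping claim you sketch to resolve the obstacle is incorrect: a node of $\Th_\ell$ lying strictly inside a refined coarse element $\widetilde T\in\TT$ is \emph{not} assigned in step~(\ref{item:def-SZ-1bi}) --- it lies in the closure of no element of $\Th_\ell\cap\Tt_\ell$, is assigned only in step~(\ref{item:def-SZ-3}), and is in general not a node of $\Tt_\ell$ at all. So the ``node-assignment picture'' you propose to pin down would not come out as stated, and the coefficient identity alone does not ``close the argument immediately.'' The repair is exactly the paper's route: split into elements common to both meshes (where the operators agree nodewise, as you argue) and elements belonging to only one mesh (where each operator reproduces $u$, verified functional by functional as above).
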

\begin{proof}
\emph{1.~step:} Let $T \in \Th_\ell \cap \Tt_\ell$. We claim 
that $(\widetilde I^{SZ}_\ell u)|_T = (\widehat I^{SZ}_\ell u)|_T$. The nodes 
$z \in \overline{T}$ and the shape functions $\varphi_{z,\Th_\ell}$, $\varphi_{z,\Tt_\ell}$ 
for the meshes $\Th_\ell$ and $\fcc(\TT,\Th_\ell)$ coincide on $T$. 
For the averaging element $T_z$ associated with $z \in \overline{T}$, 
two cases can occur: 
\begin{enumerate}
\item 
The two averaging sets for the two operators coincide. This happens 
in the following three cases: 
a) 
if $z \in T$ (case~\ref{item:def-SZ-1a} of Def.~\ref{def:adapted-SZ}); 
b) if $z \in \partial T$ and 
(case~\ref{item:def-SZ-1bii} of Def.~\ref{def:adapted-SZ}) 
arose for $T$ in the loop; 
c) 
(case~\ref{item:def-SZ-1bii} of Def.~\ref{def:adapted-SZ}) 
arose for an element $T' \in \Th_\ell \cap \Tt_\ell$ with $z \in \overline{T'}$ that
appeared earlier in the loop than $T$. 
Since the averaging sets coincide, the value of the 
linear functionals are the same.  
\item 
Case~\ref{item:def-SZ-1bi} of Def.~\ref{def:adapted-SZ} arose. Then,
both averaging sets are contained in an element $\widetilde T \in \TT$.
Since $u|_{\widetilde T} \in P_p$, we obtain from 
(\ref{eq:averaging-gleich-punktauswertung}) that both linear functionals
equal $u(z)$. 
\end{enumerate}
Hence, in all cases the values of the linear functionals coincide so that 
indeed the Scott-Zhang operators on the element $T$ are equal. 

\emph{2.~step:} In the region not covered by elements in $\Th_\ell \cap \Tt_\ell$
we show $\widetilde I^{SZ}_\ell u = u$ and $\widehat I^{SZ}_\ell u = u$ for 
$u \in S^{p,1}(\TT)$. 
For $\widetilde I^{SZ}_\ell$ this is shown in step 3 and for $\widehat I^{SZ}_\ell$ in step 4. 
This completes the proof of the lemma. 

\emph{3.~step:} 
We start by noting that the definition of the finest common coarsening implies
\begin{equation}
\label{eq:foo}
\mbox{ for any $T' \in \Th_\ell \setminus \Tt_\ell$ there exists
$\widetilde T \in \TT$ with $T' \subset \widetilde T$}. 
\end{equation}
Consider now $T \in \Th_\ell \setminus \Tt_\ell$. By
(\ref{eq:foo}) there exists 
$\widetilde T \in \TT$ such that $T \subset \widetilde T$. For 
$u \in S^{p,1}(\TT)$ we have $u|_{\widetilde T} \in P_p(\widetilde T)$. 
Moreover, $(\widehat I^{SZ}_\ell u)|_T  = \sum_{z\in\mathcal{N}(T)} \varphi_{z,\Th_\ell} l_z(u)$
with the linear functional $l_z(u) = \int_{T_z} \varphi^\ast_{z,T} u$. 
For the interior nodes $z \in T$ we have $T_z = T$ and, since $u|_T \in P_p(T)$, 
$l_z(u) = u(z)$ by (\ref{eq:averaging-gleich-punktauswertung}). 
For $z \in \partial T$, the following cases may occur: 
\begin{enumerate}[(1)]
\item 
\label{item:step3-1}
If $T_z = T$, then again 
$l_z(u) = u(z)$ by (\ref{eq:averaging-gleich-punktauswertung}). 
\item 
\label{item:step3-2}
If $T_z $ is a neighboring element of $T$, then the following cases can occur:
  \begin{enumerate}[(a)]
    \item 
\label{item:step3-2a}
      $T_z \in \Th_\ell \cap \Tt_\ell$: Then, 
       $z \in \partial T$ and hence also in $\partial T_z$. The construction 
       of the averaging sets in Def.~\ref{def:adapted-SZ} is such that the 
       averaging set $T_z$ for node $z$ is chosen such that it is contained in 
       an element $T' \in \TT$ if possible. Since $T \subset \widetilde T \in \TT$ 
       is possible by (\ref{eq:foo}), we conclude that also 
       $T_z \subset T'' \in \TT$ for some $T'' \in \TT$. Hence, $u|_{T_z} \in P_p(T_z)$, and the value of 
       the linear functional is $u(z)$. 
     \item 
\label{item:step3-2b}
          $T_z \in \Th_\ell \setminus \Tt_\ell$. Then, 
           by (\ref{eq:foo}) we get $u|_{T_z} \in P_p(T_z)$ so that again 
           by (\ref{eq:averaging-gleich-punktauswertung}) $l_z(u) = u(z)$.
        \end{enumerate}
  
\end{enumerate}
In total, we have   
arrived at $(\widehat  I^{SZ}_\ell u)|_T = \sum_{z \in \mathcal{N}(T)} 
\varphi_{z,\Th_\ell} u(z) = u|_T$, since $u|_T \in P_p(T)$. 

\emph{4.~step:} 
Consider $T \in \Tt_\ell\setminus \Th_\ell$. Then $T \in \TT$. 
We have $(\widetilde I^{SZ}_\ell u)|_T  = \sum_{z \in\mathcal{N}(T)} \varphi_{z,\Tt_\ell} l_z(u)$
with the linear functional $l_z(u) = \int_{T_z} \varphi^\ast_{z,T} u$. 
For the interior nodes $z \in T$ we have $T_z = T$ and, since $u|_T \in P_p(T)$, 
the property (\ref{eq:averaging-gleich-punktauswertung}) gives 
$l_z(u) = u(z)$.  

For $z \in \partial T$, two cases may occur: If $T_z = T$, then again 
$l_z(u) = u(z)$ by (\ref{eq:averaging-gleich-punktauswertung}). 
If $T_z$ is a neighboring element of $T$, then either 
$T_z \in \Th_\ell \cap \Tt_\ell$, which means 
$l_z(u) = u(z)$ by the same reasoning as in
step~3, item~\ref{item:step3-2a}, 
or $T_z \in \Tt_\ell\setminus \Th_\ell \subset \TT$ so 
that $u|_{T_z} \in P_p(T_z)$ and thus 
by (\ref{eq:averaging-gleich-punktauswertung}) $l_z(u) = u(z)$.
In total, we have
arrived at $(\widetilde  I^{SZ}_\ell u)|_T = \sum_{z \in \mathcal{N}(T)} 
\varphi_{z, \Tt_\ell} u(z) = u|_T$, since $u|_T \in P_p(T)$. 
\end{proof}
\subsection{Proof of the norm equivalence of Theorem~\ref{thm:multilevel-fcc}}
With Lemma~\ref{lemma:scott-zhang-tilde-hat}, Corollary~\ref{cor:interpolation}, and Lemma~\ref{lemma:inverse-estimate},
we are able to prove 
the norm equivalence for the multilevel decomposition of Theorem~\ref{thm:multilevel-fcc}.

\begin{proof}[Proof of Theorem~\ref{thm:multilevel-fcc}]
We apply \cite[Thm.~{3.5.3}]{cohen2003numerical} for the spaces 
$X = \Big(S^{p,1}(\TT),\norm{\cdot}_{L^2(\Omega)}\Big)$, 
$Y = \Big(S^{p,1}(\TT),\norm{\cdot}_{B^{3/2}_{2,\infty}(\Omega)}\Big)$ noting that
we have $S^{p,1}(\Tt_\ell) \subset S^{p,1}(\TT)$. 

Then, \cite[Thm.~{3.5.3}]{cohen2003numerical} provides the equivalence of the second and third norm to the 
norm on the interpolation space $(X,Y)_{\theta,q}$, which by Corollary~\ref{cor:interpolation} is the 
$B^{3/2\theta}_{2,q}(\Omega)$-norm, provided a Jackson-type and a Bernstein-type estimate holds. \\

\emph{1.~step (Jackson-type inequality):} 
Using Lemma~\ref{lemma:scott-zhang-tilde-hat}, we compute 
for $u \in S^{p,1}(\TT)$ and arbitrary $w \in S^{p,1}(\Th_\ell)$
\begin{align*}
\inf_{v \in S^{p,1}(\Tt_\ell) } \|u - v\|_{L^2(\Omega)} & \leq 
\|u - \widetilde I^{SZ}_\ell u\|_{L^2(\Omega)}   = 
\|u - \widehat I^{SZ}_\ell u\|_{L^2(\Omega)}   = 
\|u -w - \widehat I^{SZ}_\ell (u-w)\|_{L^2(\Omega)}   \\
&\lesssim \|u - w\|_{L^2(\Omega)}. 
\end{align*}
Hence, standard approximation results on quasi-uniform meshes provide
\begin{align}
\nonumber 
 \inf_{v \in S^{p,1}(\Tt_\ell) } \|u - v\|_{L^2(\Omega)} \leq \|u - \widetilde I^{SZ}_\ell u\|_{L^2(\Omega)} &\lesssim
\inf_{w \in \in S^{p,1}(\Th_\ell)} \|u - w\|_{L^2(\Omega)}  \\
\label{eq:jackson}
&\lesssim \hh_\ell^{3/2}\|u\|_{B^{3/2}_{2,\infty}(\Omega)} \lesssim 
2^{-3\ell /2}\|u\|_{B^{3/2}_{2,\infty}(\Omega)}
\end{align}
since $\Th_\ell$ is a quasi-uniform mesh of mesh size $\hh_\ell = \hh_02^{-\ell}$.
We note that this estimate also implies the additional assumption \cite[Eqn.(3.5.29)]{cohen2003numerical}
on the projection operators $\widetilde I^{SZ}_\ell$.\\

\emph{2.~step (Bernstein-type inequality):} 
Using the projection property of the Scott-Zhang operators 
and Lemma~\ref{lemma:scott-zhang-tilde-hat}, we get for 
arbitrary $v \in S^{p,1}(\Tt_\ell)$
\begin{align}
\nonumber 
\|v\|_{B^{3/2}_{2,\infty}(\Omega)} & = 
\|\widetilde I^{SZ}_\ell v\|_{B^{3/2}_{2,\infty}(\Omega)}  = 
\|\widehat I^{SZ}_\ell v\|_{B^{3/2}_{2,\infty}(\Omega)}  
\stackrel{\text{Lemma~\ref{lemma:inverse-estimate}}}{\lesssim} \hh_\ell^{-3/2} 
\|\widehat I^{SZ}_\ell v\|_{L^{2}(\Omega)}  \\
\label{eq:bernstein}
 & = \hh_\ell^{-3/2}
\|\widetilde  I^{SZ}_\ell v\|_{L^{2}(\Omega)} 
 = \hh_\ell^{-3/2}
\|v\|_{L^{2}(\Omega)}. 
\end{align}
As the family of operators $\widetilde I^{SZ}_{\ell}:X\rightarrow S^{p,1}(\Tt_\ell)$ is also uniformly bounded in
the $L^2(\Omega)$-norm, all assumptions of \cite[Thm.~{3.5.3}]{cohen2003numerical} are valid and 
consequently the norm equivalences are proven.
\end{proof}

  \subsection{Boundary conditions}
  The previous results do not consider (homogeneous) Dirichlet boundary conditions. For 
  the application we have in mind (cf.\ \eqref{eq:modelproblem}), an interpolation result similar to 
  Corollary~\ref{cor:interpolation} for the spaces $L^2(\Omega)$, $H^1_0(\Omega)$ and $\widetilde{H}^s(\Omega)$ for
  $s \in (0,1)$ is of interest. Interpolation results for theses spaces are already available in the literature,
  see, e.g., \cite{AFFKP15}, where the proof uses stability properties of the Scott-Zhang projection and 
  the abstract result from \cite{arioli-loghin09}, similarly to Corollary~\ref{cor:interpolation}. 
  For sake of completeness, we state the result in the following corollary.

\begin{corollary}\label{cor:interpolation-0} 
Let $s\in (0,1)$. Then, there holds
\begin{align*}
\left( (S_0^{p,1}(\TT), \|\cdot\|_{L^2(\Omega)}),  
(S_0^{p,1}(\TT), \|\cdot\|_{H^{1}(\Omega)})
\right)_{s,2} = (S_0^{p,1}(\TT), \|\cdot\|_{\widetilde{H}^s(\Omega)})
\end{align*}
with equivalent norms. 
\end{corollary}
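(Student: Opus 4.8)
The plan is to mimic the proof of Corollary~\ref{cor:interpolation}, namely to invoke the abstract machinery from \cite{arioli-loghin09} together with a bounded projection onto the discrete space. Concretely, I would first recall the known interpolation identity for the continuous scale, $(L^2(\Omega),H^1_0(\Omega))_{s,2}=\widetilde H^s(\Omega)$ for $s\in(0,1)$ with equivalent norms; this is classical (see, e.g., \cite{tartar07} or the references in \cite{AFFKP15}). The abstract result of \cite{arioli-loghin09} states that if a closed subspace $V\subset X_0\cap X_1$ (here $V=S^{p,1}_0(\TT)$, $X_0=L^2(\Omega)$, $X_1=H^1_0(\Omega)$) admits a projection that is bounded simultaneously on $X_0$ and on $X_1$, then $(V\cap X_0,V\cap X_1)_{s,2}=V\cap (X_0,X_1)_{s,2}$ with equivalent norms, the equivalence constants depending only on the norms of the projection. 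Applying this with the continuous interpolation identity then yields the claim.

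The key step is therefore to exhibit a projection $P:L^2(\Omega)\to S^{p,1}_0(\TT)$ that is bounded in $L^2(\Omega)$ and in $H^1_0(\Omega)$. For this I would use the Scott-Zhang operator of \cite{scott-zhang90}, constructed exactly as in its original form, i.e., with the boundary treated specially: for nodes $z$ on $\partial\Omega$ one selects the averaging element $T_z$ to be a boundary element whose relevant face lies in $\partial\Omega$, and for nodes on faces contained in $\partial\Omega$ the averaging is performed over that $(d{-}1)$-face. With this choice the operator maps $S^{p,1}(\TT)\cap H^1_0(\Omega)=S^{p,1}_0(\TT)$ into itself, reproduces elements of $S^{p,1}_0(\TT)$, preserves homogeneous Dirichlet data, and is stable in $L^2(\Omega)$ and $H^1(\Omega)$ (hence in $H^1_0(\Omega)$) with constants depending only on $\Omega$, $p$, and the $\gamma$-shape regularity of $\TT$; all of this is standard, see \cite{scott-zhang90} or \cite[Sec.~4.8]{brenner-scott02}. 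Thus $P$ is the desired doubly bounded projection onto $S^{p,1}_0(\TT)$.

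Finally, feeding $P$, $X_0=L^2(\Omega)$, $X_1=H^1_0(\Omega)$ into the abstract lemma of \cite{arioli-loghin09} and using $(L^2(\Omega),H^1_0(\Omega))_{s,2}=\widetilde H^s(\Omega)$ gives
\begin{align*}
\left((S^{p,1}_0(\TT),\|\cdot\|_{L^2(\Omega)}),(S^{p,1}_0(\TT),\|\cdot\|_{H^1(\Omega)})\right)_{s,2}
= S^{p,1}_0(\TT)\cap\widetilde H^s(\Omega)
= (S^{p,1}_0(\TT),\|\cdot\|_{\widetilde H^s(\Omega)})
\end{align*}
with equivalent norms, which is the assertion. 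The only mild subtlety, and the part that needs the most care, is verifying that the boundary-respecting Scott-Zhang operator genuinely maps $L^2(\Omega)$ into $S^{p,1}_0(\TT)$ (not merely into $S^{p,1}(\TT)$) and is $H^1_0$-stable; this is exactly the situation handled in \cite{scott-zhang90} and reproduced in \cite{AFFKP15}, so I would simply cite those references rather than redo the estimates. Hence the proof reduces to assembling these ingredients, as in Corollary~\ref{cor:interpolation}.
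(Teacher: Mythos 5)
Your overall strategy coincides with the paper's: the paper gives no proof of this corollary but points to \cite{AFFKP15}, whose argument is exactly the combination you describe --- the abstract result of \cite{arioli-loghin09} applied with a Scott-Zhang type projection onto $S^{p,1}_0(\TT)$ that is simultaneously stable in $L^2(\Omega)$ and $H^1_0(\Omega)$, together with the classical identity $(L^2(\Omega),H^1_0(\Omega))_{s,2}=\widetilde H^s(\Omega)$.

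There is, however, a genuine defect in your key step: the projection you exhibit does not satisfy the hypotheses you need. The original boundary-respecting Scott-Zhang operator of \cite{scott-zhang90}, in which the functionals attached to nodes on $\partial\Omega$ are averages over $(d-1)$-dimensional faces contained in $\partial\Omega$, is not defined on $L^2(\Omega)$: those face integrals require a trace, so the operator acts only on functions with (at least) $W^{1,1}$-regularity. Consequently it cannot be ``bounded on $X_0=L^2(\Omega)$'' in the sense required by \cite{arioli-loghin09}, where the projection must be applied to an arbitrary $u_0\in L^2(\Omega)$ coming from a decomposition $u=u_0+u_1$ in the estimate of the $K$-functional; the citations of \cite{scott-zhang90} and \cite[Sec.~4.8]{brenner-scott02} give $H^1$-stability but not $L^2$-stability of that boundary-adapted variant. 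The standard remedy --- the one used in \cite{AFFKP15} and described in the paper immediately after the corollary (the operators $\widetilde I^{SZ}_{0,\ell}$) --- is to take a Scott-Zhang operator with volume averaging for all nodes and simply drop the contributions of the shape functions associated with nodes on $\partial\Omega$. This operator is defined on all of $L^2(\Omega)$, maps into $S^{p,1}_0(\TT)$, reproduces $S^{p,1}_0(\TT)$ (the boundary nodal values of such functions vanish anyway), is $L^2$-stable by construction, and is $H^1$-stable on $H^1_0(\Omega)$ because the omitted boundary contributions are controlled for $u\in H^1_0(\Omega)$ by a Friedrichs-type inequality on boundary patches. With this projection in place of the face-averaging one, your argument goes through verbatim.
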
 

As done, for example, in \cite{AFFKP15}, the Scott-Zhang operators 
$\widetilde I^{SZ}_\ell$  and $\widehat I^{SZ}_\ell$ can be modified by simply dropping 
the contributions from the shape functions associated with nodes on $\partial\Omega$ and thus map
into the spaces 
$\widetilde S^{p,1}_0(\widetilde\TT_\ell)$ and  $\widetilde S^{p,1}_0(\widehat\TT_\ell)$, respectively. 
We denote these operators by $\widetilde I^{SZ}_{0,\ell}$ and $\widehat I^{SZ}_{0,\ell}$, and
they are still stable in $L^2(\Omega)$ and $H^1_0(\Omega)$. 
Therefore, Theorem~\ref{thm:multilevel-fcc} also provides a lower bound for the multilevel decomposition 
based on the Scott-Zhang operator in the $\widetilde{H}^s(\Omega)$-norm.
  
  \begin{corollary} \label{cor:lowerbound}
  Let $\TT$ be a mesh obtained by NVB-refinement of a triangulation $\Th_0$.
  Let $\Th_\ell$ be the sequence of uniformly refined meshes starting from $\Th_0$
  with mesh size $\hh_\ell = \hh_0 2^{-\ell}$. 
  Set $\Tt_\ell:= \fcc(\TT,\Th_\ell)$. 
  Let $\widetilde{I}^{SZ}_{0,\ell}:\widetilde H^s(\Omega) \rightarrow S^{p,1}_0(\Tt_\ell)$ 
  be the Scott-Zhang operator defined as above. 
Then, we have   	
\begin{align} \label{ Pieswise linear-Projection}
\sum_{\ell=0}^{\infty} \widehat{h}_\ell ^{-2s}\left\| u- \widetilde{I}^{SZ}_{0,\ell} u\right\| ^2 _{L^2 (\Omega)} 
\le C \left\|u \right\| ^2 _ {\widetilde H^{s}(\Omega)} \quad \quad \forall u \in S _0^{p,1} (\mathcal{T})   \quad   0<s<1.
\end{align}
  \end{corollary}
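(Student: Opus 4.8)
The plan is to recognise the asserted bound as the Jackson (``direct estimate'') half of a multilevel norm equivalence on the pair of discrete spaces $X:=(S^{p,1}_0(\TT),\|\cdot\|_{L^2(\Omega)})$, $Y:=(S^{p,1}_0(\TT),\|\cdot\|_{H^1(\Omega)})$, with approximation spaces $V_\ell:=S^{p,1}_0(\Tt_\ell)$ (note $V_\ell\subset V_{\ell+1}$, since $\Tt_{\ell+1}$ refines $\Tt_\ell$) and the operators $\widetilde I^{SZ}_{0,\ell}$. Since $\widetilde H^1(\Omega)=H^1_0(\Omega)$ with equivalent norms on $S^{p,1}_0(\TT)$, Corollary~\ref{cor:interpolation-0} identifies the real interpolation space as $(X,Y)_{s,2}=(S^{p,1}_0(\TT),\|\cdot\|_{\widetilde H^s(\Omega)})$, with equivalence constants controlled by the shape regularity of $\TT$, hence by $\Th_0$. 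As $\widehat h_\ell=\widehat h_0 2^{-\ell}$, the weights $\widehat h_\ell^{-2s}$ form a geometric sequence, so by the standard comparison of $\sum_{\ell\ge0}\widehat h_\ell^{-2s}K(\widehat h_\ell,u)^2$ with $\int_0^\infty\big(t^{-s}K(t,u)\big)^2\tfrac{dt}{t}=\|u\|_{(X,Y)_{s,2}}^2$ (using the monotonicity of the $K$-functional), it suffices to prove, uniformly in $\ell$ and $\TT$, the pointwise estimate $\|u-\widetilde I^{SZ}_{0,\ell}u\|_{L^2(\Omega)}\lesssim K(\widehat h_\ell,u)$, where $K(t,u):=\inf_{v\in S^{p,1}_0(\TT)}\big(\|u-v\|_{L^2(\Omega)}+t\|v\|_{H^1(\Omega)}\big)$. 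Only this single direction is needed, so no Bernstein-type estimate enters the argument.

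For the pointwise estimate I would use the triangle inequality together with the $L^2(\Omega)$-stability of $\widetilde I^{SZ}_{0,\ell}$: for any $v\in S^{p,1}_0(\TT)$,
\[\|u-\widetilde I^{SZ}_{0,\ell}u\|_{L^2(\Omega)}\le\|u-v\|_{L^2(\Omega)}+\|\widetilde I^{SZ}_{0,\ell}(u-v)\|_{L^2(\Omega)}+\|v-\widetilde I^{SZ}_{0,\ell}v\|_{L^2(\Omega)}\lesssim\|u-v\|_{L^2(\Omega)}+\|v-\widetilde I^{SZ}_{0,\ell}v\|_{L^2(\Omega)};\]
taking the infimum over $v$, it remains to prove the Jackson estimate $\|v-\widetilde I^{SZ}_{0,\ell}v\|_{L^2(\Omega)}\lesssim\widehat h_\ell\|v\|_{H^1(\Omega)}$ for $v\in S^{p,1}_0(\TT)$.

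To prove the Jackson estimate I would write $\widetilde I^{SZ}_{0,\ell}v=\widetilde I^{SZ}_\ell v-R_\ell v$, where $R_\ell v:=\sum_{z\in\mathcal N(\Tt_\ell)\cap\partial\Omega}\varphi_{z,\Tt_\ell}\big(\int_{T_z}\varphi^\ast_{z,T_z}v\big)$ collects the boundary shape-function contributions discarded in passing from $\widetilde I^{SZ}_\ell$ to $\widetilde I^{SZ}_{0,\ell}$. Lemma~\ref{lemma:scott-zhang-tilde-hat} gives $\widetilde I^{SZ}_\ell v=\widehat I^{SZ}_\ell v$ for $v\in S^{p,1}(\TT)$, and $\widehat I^{SZ}_\ell$ is a Scott--Zhang operator on the \emph{quasi-uniform} mesh $\Th_\ell$ of width $\widehat h_\ell$, so $\|v-\widehat I^{SZ}_\ell v\|_{L^2(\Omega)}\lesssim\widehat h_\ell\|v\|_{H^1(\Omega)}$ by the classical first-order Scott--Zhang approximation estimate. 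For $R_\ell v$ I would use that $v$ vanishes on $\partial\Omega$: inspecting Definition~\ref{def:adapted-SZ}, for a boundary node $z$ the averaging element $T_z$ selected for $\widetilde I^{SZ}_\ell$ is either (i) contained in some $\widetilde T\in\TT$, whence $v|_{T_z}\in P_p$ and \eqref{eq:averaging-gleich-punktauswertung} give $\int_{T_z}\varphi^\ast_{z,T_z}v=v(z)=0$, or (ii) an element of $\Th_\ell$ of diameter $\sim\widehat h_\ell$ whose closure meets $\partial\Omega$ at $z$, whence $\|\varphi_{z,\Tt_\ell}(\int_{T_z}\varphi^\ast_{z,T_z}v)\|_{L^2(\Omega)}\lesssim\|v\|_{L^2(T_z)}\lesssim\widehat h_\ell\|\nabla v\|_{L^2(\widetilde\Sigma_z)}$ by a local Friedrichs inequality on the $\widehat h_\ell$-wide boundary strip $\widetilde\Sigma_z$ around $T_z$. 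Summing over the finitely overlapping supports and strips yields $\|R_\ell v\|_{L^2(\Omega)}\lesssim\widehat h_\ell\|v\|_{H^1(\Omega)}$, which together with the bound on $\|v-\widehat I^{SZ}_\ell v\|_{L^2(\Omega)}$ completes the Jackson estimate; all constants depend only on $\Omega,d,s,p$ and the shape regularity of $\Th_0$.

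The main obstacle is the treatment of the boundary correction $R_\ell v$, namely verifying from Definition~\ref{def:adapted-SZ} and the structure of $\fcc(\TT,\Th_\ell)$ that the averaging elements attached to boundary nodes of $\Tt_\ell$ always fall into the dichotomy (i)/(ii) above --- case (i) producing exact cancellation because $v|_{\partial\Omega}=0$, case (ii) producing only an $O(\widehat h_\ell)$ contribution via Friedrichs. Everything else (the reduction to $\Th_\ell$ through Lemma~\ref{lemma:scott-zhang-tilde-hat}, the $L^2$-stability of $\widetilde I^{SZ}_{0,\ell}$, and the $K$-functional bookkeeping) is routine. It is worth stressing that the crude bound $\|R_\ell v\|_{L^2(\Omega)}\lesssim\widehat h_\ell\|v\|_{H^1(\Omega)}$ suffices here only because $R_\ell$ enters exclusively through the infimum defining $K(\widehat h_\ell,u)$; one cannot bound $\sum_\ell\widehat h_\ell^{-2s}\|\widetilde I^{SZ}_\ell u-\widetilde I^{SZ}_{0,\ell}u\|_{L^2(\Omega)}^2$ term by term, which is why the argument is organised around the interpolation identity of Corollary~\ref{cor:interpolation-0} rather than a verbatim application of Theorem~\ref{thm:multilevel-fcc}.
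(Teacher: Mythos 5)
Your argument is correct, but it takes a genuinely different route from the paper. The paper proves the corollary by rerunning the multilevel machinery of Theorem~\ref{thm:multilevel-fcc} for the boundary-condition-preserving operators: it notes that both the Jackson estimate \eqref{eq:jackson} and the Bernstein estimate \eqref{eq:bernstein} remain valid with $B^{3/2}_{2,\infty}(\Omega)$ replaced by $H^1_0(\Omega)$ (and $\hh_\ell^{\pm 3/2}$ by $\hh_\ell^{\pm 1}$), invokes the abstract result \cite[Thm.~{3.5.3}]{cohen2003numerical} to obtain the full two-sided norm equivalence for the pair $\bigl(L^2(\Omega),H^1_0(\Omega)\bigr)$, and then reads off the asserted bound as one half of that equivalence after identifying the interpolation space with $\widetilde H^s(\Omega)$ via Corollary~\ref{cor:interpolation-0}. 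You instead prove only the single inequality that is claimed: $L^2$-stability of $\widetilde I^{SZ}_{0,\ell}$ plus a discrete Jackson estimate give $\|u-\widetilde I^{SZ}_{0,\ell}u\|_{L^2(\Omega)}\lesssim K(\hh_\ell,u)$ for the $K$-functional of the discrete pair, the geometric-scale discretization of the $K$-functional and Corollary~\ref{cor:interpolation-0} do the bookkeeping, and no Bernstein estimate or abstract theorem is needed. You also make explicit what the paper merely asserts, namely the effect of dropping the boundary shape functions: via Lemma~\ref{lemma:scott-zhang-tilde-hat} you reduce to $\widehat I^{SZ}_\ell$ on the quasi-uniform mesh $\Th_\ell$ and control the correction $R_\ell v$ by the dichotomy that an averaging element of $\widetilde I^{SZ}_\ell$ lies in $\Tt_\ell=\TB_1\cup\TB_2\cup\TB_3$, so it is either an element of $\TT$ (then \eqref{eq:averaging-gleich-punktauswertung} and $v|_{\partial\Omega}=0$ give exact cancellation) or an element of $\Th_\ell$ of size $\simeq\hh_\ell$ touching $\partial\Omega$ (handled by a scaled Friedrichs inequality); this dichotomy is indeed what Definition~\ref{def:adapted-SZ} produces. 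Two small points you should spell out: the bound $\|\varphi_{z,\Tt_\ell}\|_{L^2(\Omega)}\,\|\varphi^\ast_{z,T_z}\|_{L^2(T_z)}\lesssim 1$ in the second case needs that all elements of $\Tt_\ell$ in the patch of $z$ have diameter $\simeq\hh_\ell$, which follows from conformity and shape regularity of $\Tt_\ell$ (in the spirit of Lemma~\ref{lem:Ml}); and the local Friedrichs inequality uses the Lipschitz character of $\Omega$ (extend $v\in H^1_0(\Omega)$ by zero, which then vanishes on a set of proportional measure in a ball of radius $\simeq\hh_\ell$ around $z$). In exchange for being slightly longer, your argument is more elementary and self-contained and isolates exactly which ingredient (the direct estimate) the corollary rests on, while the paper's route yields the stronger two-sided norm equivalence in the $H^\theta_0$-scale with essentially no extra work given the proof of Theorem~\ref{thm:multilevel-fcc}.
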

  \begin{proof}
  We note that Jackson-type and Bernstein-type estimates (\ref{eq:jackson}) and (\ref{eq:bernstein}) in
  the proof of Theorem~\ref{thm:multilevel-fcc} also hold for the variant of the Scott-Zhang projection that preserves
  homogeneous boundary conditions if we replace $\hh^{3/2}_\ell \|u\|_{B^{3/2}_{2,\infty}(\Omega)}$ with 
$\hh_\ell \|u\|_{H^1_0(\Omega)}$ in (\ref{eq:jackson}) and if we replace in (\ref{eq:bernstein}) 
the norms $\|\cdot\|_{B^{3/2}_{2,\infty}(\Omega)}$ with $\|\cdot\|_{H^1(\Omega)}$ and correspondingly 
$\hh^{-3/2}$ with $\hh^{-1}$. Therefore, the norm equivalences of Theorem~\ref{thm:multilevel-fcc} are 
still valid if one replace $B^{3\theta/2}_{2,\infty}(\Omega)$ with $H^{\theta}_0(\Omega)$, 
$\widetilde I^{SZ}_\ell$ with 
$\widetilde I^{SZ}_{0,\ell}$, and $2^{3\theta\ell/2}$ with $2^{\theta \ell}$.  
\end{proof}

\section{Optimal additive Schwarz  preconditioning for the  fractional Laplacian on locally refined meshes}
\label{sec:ASproof}
In this section, we prove 
the optimal bounds on the eigenvalues of the preconditioned matrices $\mathbf{P}_{AS}^{L}$
of Theorem~\ref{thm:ASfractional} and $\widetilde{\mathbf{P}}_{AS}^{L}$
of Theorem~\ref{thm:ASfractional2}. The key steps are done in Proposition~\ref{prop:spectralEquiv}
or Proposition~\ref{prop:Equiv}, which  
state a
spectral equivalence of the corresponding additive Schwarz operator and the identity in the energy scalar product.

\subsection{Abstract analysis of the additive Schwarz method}
\label{sub sec:Abstract analysis of the additive Schwarz}
\subsubsection{The mesh hierarchy $\widetilde \TT_\ell = \fcc(\TT_L,\widehat\TT_\ell)$}

The additive Schwarz method is based on a local subspace decomposition.
For the mesh hierarchy $\widetilde \TT_\ell = \fcc(\TT_L,\widehat\TT_\ell)$
we recall that $\widetilde{V}_\ell \in \{S^{0,0}(\widetilde \TT_\ell),S^{1,1}_0(\widetilde \TT_\ell)\}$
is either the space of piecewise constants or piecewise linears on the mesh $\widetilde \TT_\ell$.
We follow the abstract setting 
of \cite{TosWid05} and decompose
$\widetilde V_L = \sum_{\ell=0}^L \widetilde{\mathcal{V}}_\ell$
with
\begin{align}\label{eq:defdecomp}
\widetilde{\mathcal{V}}_\ell := \operatorname*{span}\left\{ \widetilde\varphi^{\ell}_z \; : \; z \in \widetilde{\mathcal{M}}_\ell\right\},
\end{align}
where $\widetilde\varphi ^{\ell}_z$
denotes the basis function associated with the node $z \in \widetilde{\mathcal N}_\ell$. We recall that these 
functions are either characteristic functions of elements (for the piecewise constant case) or nodal 
hat functions (for the case of piecewise linears).
We note that 
 $\widetilde{\mathcal{V}}_\ell \subset \widehat{V}_\ell$ and since $\widetilde{\mathcal{M}}_\ell$ only contains  new nodes 
and direct neighbors this space effectively is a discrete space on a uniform submesh (cf. Lemma~\ref{lem:Ml}).

On the subspaces $\widetilde{\mathcal{V}}_\ell$ we introduce the symmetric, positive definite bilinear form  
$\widetilde a_\ell(\cdot,\cdot):\widetilde{\mathcal{V}}_\ell \times \widetilde{\mathcal{V}}_\ell$ (also known as local solvers) with 
\begin{align*}
 \widetilde a_\ell(u_\ell,u_\ell) := \sum_{z \in \widetilde{\mathcal{M}}_\ell} 
 \norm{\widehat h_\ell^{-s}u_\ell(z)\widetilde\varphi^{\ell}_z}_{L^2(\Omega)}^2 \simeq 
 \sum_{z \in \widetilde{\mathcal{M}}_\ell}  \widehat h_\ell^{d-2s}\abs{u_\ell(z)}^2.
\end{align*}

The following proposition, c.f., e.g., \cite{zhang92,MatNep85}, 
gives bounds on the minimal and maximal 
eigenvalues of the preconditioned matrix $\widetilde{\mathbf{P}}^L_{AS}$ based on the abstract additive Schwarz theory.

\begin{proposition}\label{prop:Equiv}
 \begin{enumerate}
  \item[(i)] Assume that every $u \in \widetilde V_L$ admits a decomposition $u = \sum_{\ell=0}^L u_\ell$ with 
  $u_\ell \in \widetilde{\mathcal{V}}_\ell$ satisfying
  $ \sum_{\ell=0}^L \widetilde a_\ell(u_\ell,u_\ell) \leq C_{0}\; a(u,u) 
  $
  with a constant $C_0 > 0$.
  Then, we have $\lambda_{\rm min}(\widetilde{\mathbf{P}}^L_{AS}) \geq C_{0}^{-1} $.
  
  \item[(ii)] Assume that there exists a constant $C_{1}>0$ such that for every decomposition
  $u = \sum_{\ell=0}^L u_\ell$ with $u_\ell \in \widetilde{\mathcal{V}}_\ell$, we have 
  $a(u,u)  \leq C_{1}  \sum_{\ell=0}^L \widetilde a_\ell(u_\ell,u_\ell).
  $
  Then, $\lambda_{\rm max}(\widetilde{\mathbf{P}}^L_{AS}) \leq C_{1} $.
 \end{enumerate}
\end{proposition}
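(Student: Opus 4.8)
The plan is to read Proposition~\ref{prop:Equiv} as an instance of the abstract additive Schwarz (fictitious space) lemma, c.f.\ \cite{zhang92,MatNep85}, and to reduce it to the two stated hypotheses by essentially linear-algebraic manipulations. First I would reformulate the preconditioned matrix $\widetilde{\mathbf P}^L_{AS} = (\widetilde{\mathbf B}^L)^{-1}\widetilde{\mathbf A}^L$ as an operator $P_{AS}$ on the Hilbert space $(\widetilde V_L, a(\cdot,\cdot))$. For each $\ell$ define the local solver $P_\ell : \widetilde V_L \to \widetilde{\mathcal V}_\ell$ by $\widetilde a_\ell(P_\ell u, v_\ell) = a(u,v_\ell)$ for all $v_\ell \in \widetilde{\mathcal V}_\ell$; then, via the nodal bases, $\widetilde{\mathbf P}^L_{AS}$ is the matrix of $P_{AS} := \sum_{\ell=0}^L P_\ell$, provided one first checks that the concrete diagonal scaling $\widetilde{\mathbf D}^\ell_{\rm inv}$ with entries $(\widetilde{\mathbf A}^\ell_{zz})^{-1}$ is spectrally equivalent, uniformly in $\ell$ and $L$, to the idealized solver $\widetilde a_\ell$. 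This rests on the scaling identity $\widetilde{\mathbf A}^\ell_{zz} = a(\widetilde\varphi^\ell_z,\widetilde\varphi^\ell_z) \simeq \widehat h_\ell^{\,d-2s} \simeq \|\widehat h_\ell^{-s}\widetilde\varphi^\ell_z\|_{L^2(\Omega)}^2$ for $z \in \widetilde{\mathcal M}_\ell$, which holds with constants depending only on $d$, $s$ and the shape regularity because, by Lemma~\ref{lem:Ml}, the patches $\omega_\ell(z)$ of the nodes $z\in\widetilde{\mathcal M}_\ell$ form a locally quasi-uniform submesh of mesh size $\widehat h_\ell$ (the upper bound being the inverse estimate in $\widetilde H^s$, the lower bound a standard scaling argument). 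Replacing $\widetilde a_\ell$ by $\langle\widetilde{\mathbf D}^\ell\,\cdot\,,\cdot\rangle$ then only alters $C_0$, $C_1$ by this fixed equivalence constant, so one may work with $\widetilde a_\ell$ throughout.

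Next I would record the key identity. Equip the product space $\widetilde V := \prod_{\ell=0}^L \widetilde{\mathcal V}_\ell$ with the inner product $\langle (u_\ell)_\ell,(v_\ell)_\ell\rangle_{\widetilde V} := \sum_{\ell=0}^L \widetilde a_\ell(u_\ell,v_\ell)$ and let $R:\widetilde V \to \widetilde V_L$, $R((u_\ell)_\ell) := \sum_\ell u_\ell$, be the summation operator, which is surjective since $\widetilde V_L = \sum_{\ell=0}^L \widetilde{\mathcal V}_\ell$. A direct computation of the adjoint $R^\ast$ with respect to $\langle\cdot,\cdot\rangle_{\widetilde V}$ and $a(\cdot,\cdot)$ gives $(R^\ast u)_\ell = P_\ell u$, hence $P_{AS} = R R^\ast$. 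In particular $P_{AS}$ is symmetric positive definite with respect to $a(\cdot,\cdot)$, $\lambda_{\max}(P_{AS}) = \|R R^\ast\| = \|R\|^2$, and, since for a surjective $R$ the minimal-norm preimage of $u$ is $R^\ast(RR^\ast)^{-1}u$, one has $\langle (RR^\ast)^{-1}u, u\rangle_a = \min\{\|w\|_{\widetilde V}^2 : Rw = u\}$, so that $\lambda_{\min}(P_{AS})^{-1} = \sup_{u}\, a(u,u)^{-1}\,\min_{u = \sum_\ell u_\ell}\sum_{\ell=0}^L \widetilde a_\ell(u_\ell,u_\ell)$.

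Finally I would simply insert the hypotheses. For (i), the assumed existence of \emph{one} decomposition with $\sum_\ell \widetilde a_\ell(u_\ell,u_\ell) \le C_0\, a(u,u)$ bounds the minimum above, whence $\lambda_{\min}(P_{AS}) \ge C_0^{-1}$. For (ii), the assumption that $a\big(\sum_\ell u_\ell, \sum_\ell u_\ell\big) \le C_1 \sum_\ell \widetilde a_\ell(u_\ell,u_\ell)$ holds for \emph{every} tuple $(u_\ell)_\ell$ is precisely $\|R\|^2 \le C_1$, whence $\lambda_{\max}(P_{AS}) \le C_1$. Translating back to $\widetilde{\mathbf P}^L_{AS}$, which is spectrally equivalent to $P_{AS}$ by the first step, completes the argument.

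The adjoint computation and the min/max characterizations are routine — this is the classical abstract additive Schwarz machinery — so I expect no genuine obstacle there. The one place that needs care is the first step: ensuring that the realized diagonal scaling with entries $(\widetilde{\mathbf A}^\ell_{zz})^{-1}$ is equivalent to the idealized local solvers $\widetilde a_\ell$ with constants independent of the number of levels $L$, which is exactly where the local quasi-uniformity of the sets $\widetilde{\mathcal M}_\ell$ (Lemma~\ref{lem:Ml}) and the $\widetilde H^s$-scaling of the nodal basis functions enter.
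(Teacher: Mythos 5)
Your proposal is correct and is essentially the argument the paper itself relies on: the paper gives no proof of Proposition~\ref{prop:Equiv} but invokes the abstract additive Schwarz theory (Lions' lemma for the lower bound, the operator-norm identity $\lambda_{\max}(P_{AS})=\|R\|^2$ for the upper bound, c.f.\ \cite{zhang92,MatNep85} and the Toselli--Widlund framework), which is exactly the $P_{AS}=RR^\ast$ machinery you spell out. You also rightly flag the one point the paper glosses over, namely that the realized diagonal entries $\widetilde{\mathbf A}^\ell_{zz}=a(\widetilde\varphi^\ell_z,\widetilde\varphi^\ell_z)$ agree with the idealized solvers $\widetilde a_\ell$ only up to uniform constants (via the scaling $a(\widetilde\varphi^\ell_z,\widetilde\varphi^\ell_z)\simeq \widehat h_\ell^{\,d-2s}$ and Lemma~\ref{lem:Ml}), so the stated eigenvalue bounds hold with $C_0$, $C_1$ adjusted by this fixed equivalence constant, which is all that is needed for Theorem~\ref{thm:ASfractional2}.
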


The first part of Proposition~\ref{prop:Equiv} is sometimes called Lions' Lemma and follows from the existence of a stable 
decomposition proven in Lemma~\ref{lem:stabledec} below. 
The assumption of the second statement follows directly from our strengthened Cauchy-Schwarz inequality 
(Lemma~\ref{lem:strengthenedCS}) and local stability (Lemma~\ref{lem:localstab}). 
\bigskip 

It remains to show the assumptions of Proposition~\ref{prop:Equiv}.
A key ingredient in the proof is an inverse estimate 
        for the fractional Laplacian provided in Subsection~\ref{sec:invest}.
        

\subsubsection{The mesh hierarchy $\TT_\ell$ provided by an adaptive algorithm}

For the case of a mesh hierarchy  $\mathcal{T}_\ell$ generated by an adaptive algorithm similar definitions 
can be made and analyzed. However, here, we follow the notation of \cite{feischl2017optimal}, where the 
additive Schwarz operator consisting of a sum of
projections onto one dimensional spaces is analyzed. With the spaces 
$V_z ^{\ell} := \operatorname*{span}\{\varphi_z^\ell\}$ one may define local projections
$\mathcal{P}_z ^{\ell} : \widetilde H^s (\Omega) \rightarrow V_z ^{\ell}$ in the energy scalar product as 
\begin{align*}
a(\mathcal{P}_z^{\ell} u,v_z^\ell)
= a(u,v_z^{\ell}) \qquad  \text{for all} \; v_z^{\ell} \in V_z^{\ell}
\end{align*}
and define the additive Schwarz operator as 
\begin{align*}
\mathcal{P}_{AS}^{L} := \sum_{\ell=0}^{L} \sum_{z \in \mathcal{M}_\ell} \mathcal{P} _z ^{\ell}.
\end{align*}

Moreover, for $u$, $v \in V_L$ and their expansions 
$u=\sum_{j=1}^{N_L} \mathbf{x}_j \varphi_{z_j}^{L}$, $v=\sum_{j=1}^{N_L} \mathbf{y}_j \varphi_{z_j}^{L}$, we have 
\begin{align}\label{eq:matrixOperator}
a(\mathcal{P}_{AS}^{L} u,v) = 
\left\langle \mathbf{P}_{AS}^{L}\mathbf{x},\mathbf{y}\right\rangle_ {\mathbf{A} ^L},
\end{align}
where $\skp{\cdot,\cdot}_{\mathbf{A}^L} := \skp{\mathbf{A}^L\cdot,\cdot}_2$.
Therefore, the multilevel diagonal scaling is a multilevel additive Schwarz method.
Due to this observation we may analyze the additive Schwarz operator instead of the preconditioned matrix. 

  \begin{proposition}\label{prop:spectralEquiv}
  	The operators $ \mathcal{ P}_{AS} ^{L}$ is linear, bounded and symmetric in the energy scalar product.
  	 Moreover, for $u \in V_L$, we have the spectral equivalence
  	\begin{align} \label{eq:spectralEquiv}
  	c\norm{u}_{\widetilde H^s(\Omega)}^2 \le a(\mathcal{ P}_{AS}^{L} u,u) \le C \norm{u}_{\widetilde H^s(\Omega)}^2,
  	\end{align}
  	where the constants $c$, $C >0$ only depend on $\Omega$, $d$, $s$, and $\mathcal{T} _0$. 
  \end{proposition}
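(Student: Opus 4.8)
The overall plan is to run the standard additive Schwarz machinery, reducing \eqref{eq:spectralEquiv} to two one-sided eigenvalue bounds and using that coercivity and boundedness of $a(\cdot,\cdot)$ on $\widetilde{H}^s(\Omega)$ give $a(u,u)\simeq\norm{u}_{\widetilde{H}^s(\Omega)}^2$. First I would record the generic properties: each $\mathcal{P}_z^\ell$ is the $a(\cdot,\cdot)$-orthogonal projection onto the one-dimensional space $V_z^\ell$, hence linear, bounded, self-adjoint and positive semidefinite with respect to $a(\cdot,\cdot)$, and therefore so is the finite sum $\mathcal{P}_{AS}^L$. From $a(\mathcal{P}_z^\ell u,u)=\norm{\mathcal{P}_z^\ell u}_a^2$ one gets $a(\mathcal{P}_{AS}^L u,u)=\sum_{\ell=0}^L\sum_{z\in\mathcal{M}_\ell}\norm{\mathcal{P}_z^\ell u}_a^2$, so it remains to prove $a(u,u)\lesssim a(\mathcal{P}_{AS}^L u,u)\lesssim a(u,u)$ for all $u\in V_L$, i.e.\ $\lambda_{\min}(\mathcal{P}_{AS}^L)\gtrsim 1$ and $\lambda_{\max}(\mathcal{P}_{AS}^L)\lesssim 1$.

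For the lower bound (Lions' lemma, cf.\ \cite{TosWid05,zhang92,MatNep85} and Proposition~\ref{prop:Equiv}(i)) it suffices to produce one stable decomposition $u=\sum_{\ell=0}^L u_\ell$ with $u_\ell=\sum_{z\in\mathcal{M}_\ell}u_z^\ell$, $u_z^\ell\in V_z^\ell$, and $\sum_{\ell,z}a(u_z^\ell,u_z^\ell)\lesssim a(u,u)$; then $a(u,u)=\sum_{\ell,z}a(\mathcal{P}_z^\ell u,u_z^\ell)\le a(\mathcal{P}_{AS}^L u,u)^{1/2}\big(\sum_{\ell,z}a(u_z^\ell,u_z^\ell)\big)^{1/2}$ yields the claim. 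I would take the telescoping sum $u_\ell:=(I^{SZ}_{0,\ell}-I^{SZ}_{0,\ell-1})u$ with $I^{SZ}_{0,-1}:=0$, where $I^{SZ}_{0,\ell}$ is a boundary-preserving Scott--Zhang operator on $\TT_\ell$ for the piecewise-linear case and the $L^2(\Omega)$-orthogonal projection onto $S^{0,0}(\TT_\ell)$ for the piecewise-constant case (admissible since then $s<1/2$), chosen so that $I^{SZ}_{0,\ell}$ and $I^{SZ}_{0,\ell-1}$ coincide on elements left unchanged between the two levels (as in \cite{feischl2017optimal}); since $u\in V_L$, $\sum_\ell u_\ell=u$. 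The key structural point is that $u_\ell$ is then supported on the patches of the nodes created at step $\ell$ or whose patch shrank, i.e.\ $u_\ell\in\operatorname{span}\{\varphi_z^\ell:z\in\mathcal{M}_\ell\}$ by the definition of $\mathcal{M}_\ell$ and the properties of NVB. The multilevel norm equivalence on the NVB hierarchy — the adaptive-mesh counterpart of Corollary~\ref{cor:lowerbound} (resp.\ its $S^{0,0}$-analogue, obtained from the $m=1$ case of Theorem~\ref{thm:stability-discontinuous}), proved exactly as there from the Jackson and Bernstein estimates \eqref{eq:jackson}--\eqref{eq:bernstein} — then gives $\sum_\ell\norm{h_\ell^{-s}u_\ell}_{L^2(\Omega)}^2\lesssim\norm{u}_{\widetilde{H}^s(\Omega)}^2$, with $h_\ell$ the mesh-size function of $\TT_\ell$. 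Combining with the nodal energy equivalence $a(\varphi_z^\ell,\varphi_z^\ell)\simeq h_z^{d-2s}$ (where $h_z$ is the local mesh size, constant $\simeq h_\ell$ on $\omega_\ell(z)$ for $z\in\mathcal{M}_\ell$) and the uniformly finite overlap of the supports, one obtains $\sum_{\ell,z}a(u_z^\ell,u_z^\ell)\lesssim a(u,u)$.

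For the upper bound I would invoke the classical additive Schwarz estimate $\lambda_{\max}(\mathcal{P}_{AS}^L)\lesssim N_c\,\omega\,\rho(\mathcal{E})$ (cf.\ Proposition~\ref{prop:Equiv}(ii)), where $N_c$ is the uniformly bounded number of colours for the within-level overlap of the supports $\{\operatorname{supp}\varphi_z^\ell:z\in\mathcal{M}_\ell\}$, $\omega=1$ because the local solvers are exact, and $\mathcal{E}=(\varepsilon_{k\ell})_{0\le k,\ell\le L}$ collects the constants in the strengthened Cauchy--Schwarz inequality $\abs{a(v_k,v_\ell)}\le\varepsilon_{k\ell}\norm{v_k}_a\norm{v_\ell}_a$ for $v_k\in\operatorname{span}\{\varphi_z^k:z\in\mathcal{M}_k\}$, $v_\ell\in\operatorname{span}\{\varphi_z^\ell:z\in\mathcal{M}_\ell\}$. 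The crucial input is $\varepsilon_{k\ell}\lesssim 2^{-\beta\abs{k-\ell}}$ for some $\beta>0$, whence $\rho(\mathcal{E})\lesssim\sum_{j\ge 0}2^{-\beta j}\lesssim 1$; this is the adaptive-mesh analogue of Lemma~\ref{lem:strengthenedCS}, and its proof rests on the inverse estimate for $(-\Delta)^s$ in fractional Sobolev norms from Subsection~\ref{sec:invest}, applied after noting that on its support $\operatorname{span}\{\varphi_z^\ell:z\in\mathcal{M}_\ell\}$ behaves like a discrete space on a quasi-uniform mesh of size $\simeq h_\ell$ (the local-stability ingredient, as in Lemma~\ref{lem:localstab}). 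Combining the two bounds with $a(u,u)\simeq\norm{u}_{\widetilde{H}^s(\Omega)}^2$ yields \eqref{eq:spectralEquiv}.

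I expect the main obstacle to be the strengthened Cauchy--Schwarz inequality across levels: because $(-\Delta)^s$ is nonlocal, the off-diagonal terms $a(v_k,v_\ell)$ cannot be localized to element interfaces as for differential operators, so the geometric decay $\varepsilon_{k\ell}\lesssim 2^{-\beta\abs{k-\ell}}$ genuinely needs the fractional inverse estimate of Subsection~\ref{sec:invest}; a secondary, more technical point is the NVB bookkeeping showing that the telescoped Scott--Zhang differences land in $\operatorname{span}\{\varphi_z^\ell:z\in\mathcal{M}_\ell\}$ and that the patches attached to $\mathcal{M}_\ell$ are locally quasi-uniform at scale $h_\ell$.
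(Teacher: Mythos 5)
Your general frame (each $\mathcal{P}_z^\ell$ is an $a$-orthogonal projection, hence $\mathcal{P}_{AS}^L$ is linear, bounded and symmetric in the energy product, and the estimate reduces to a stable decomposition for $\lambda_{\min}$ and a strengthened Cauchy--Schwarz plus local stability for $\lambda_{\max}$) is exactly the paper's frame, and you correctly identify the fractional inverse estimate of Subsection~\ref{sec:invest} as the decisive new ingredient. The gap is that Proposition~\ref{prop:spectralEquiv} concerns the \emph{adaptively generated} hierarchy $(\TT_\ell)_\ell$, and in both halves you import structure that only the $\fcc$-hierarchy $(\widetilde\TT_\ell)_\ell$ possesses. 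For the lower bound you claim that the locally weighted stability $\sum_\ell\|h_\ell^{-s}(I^{SZ}_{0,\ell}-I^{SZ}_{0,\ell-1})u\|_{L^2(\Omega)}^2\lesssim\|u\|_{\widetilde H^s(\Omega)}^2$ is ``proved exactly as'' Corollary~\ref{cor:lowerbound} from the Jackson and Bernstein estimates \eqref{eq:jackson}--\eqref{eq:bernstein}. Those estimates live on the quasi-uniform meshes $\Th_\ell$ with the scalar mesh sizes $\hh_\ell=2^{-\ell}\hh_0$ and hinge on Lemma~\ref{lemma:scott-zhang-tilde-hat}; the abstract machinery of \cite[Thm.~{3.5.3}]{cohen2003numerical} only produces level-wise scalar weights and cannot deliver a local-mesh-size weight on $(\TT_\ell)_\ell$, where the level index does not control the local mesh width (a node entering $\mathcal{M}_\ell$ at a late step may sit in a region of still-coarse elements of size $\sim h_0$). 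This is precisely why the paper does not argue this way for the adaptive hierarchy but instead takes the stable decomposition from \cite[Sec.~{4.5}]{feischl2017optimal}, built on $L^2$-orthogonal projections onto the uniform meshes $\Th_\ell$ and genuinely local arguments.

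The same conflation affects your upper bound: the decay $\varepsilon_{k\ell}\lesssim 2^{-\beta|k-\ell|}$ and the premise that $\operatorname{span}\{\varphi_z^\ell:z\in\mathcal{M}_\ell\}$ behaves like a space on a quasi-uniform mesh of size tied to the level are exactly Lemma~\ref{lem:Ml} and Lemma~\ref{lem:strengthenedCS} \emph{for the $\fcc$-hierarchy}, where $h_\ell(z)\simeq\hh_\ell$ for $z\in\widetilde{\mathcal M}_\ell$. On an adaptive hierarchy, nodes in $\mathcal{M}_k$ and $\mathcal{M}_\ell$ with $k\ll\ell$ can carry comparable local mesh sizes, so there is no geometric decay in the level difference and $\rho(\mathcal{E})$ cannot be bounded by a geometric series in $|k-\ell|$. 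What does survive (and is what the paper uses) is a strengthened Cauchy--Schwarz inequality between the one-dimensional spaces $V_z^k$, $V_{z'}^\ell$ with decay in the \emph{ratio of local mesh sizes} --- the proof of Lemma~\ref{lem:strengthenedCS} needs only the fractional inverse estimate and a Poincar\'e-type inequality and carries over --- combined with the algebraic counting arguments of \cite[Sec.~{4.6}]{feischl2017optimal}, which exploit that repeated membership of a node in the sets $\mathcal{M}_\ell$ forces its local mesh size to shrink geometrically. Your argument would go through essentially unchanged for Theorem~\ref{thm:ASfractional2} (the $\fcc$-hierarchy), but for Proposition~\ref{prop:spectralEquiv} these two ingredients (or equivalent local arguments) are missing and cannot be replaced by the level-wise reasoning you propose.
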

  
  As in \cite{feischl2017optimal}, Proposition~\ref{prop:spectralEquiv} directly implies 
  Theorem~\ref{thm:ASfractional}.

\begin{proof}[Proof of Theorem~\ref{thm:ASfractional}]
Combining the bounds of Proposition~\ref{prop:spectralEquiv} with \eqref{eq:matrixOperator} gives 
\begin{align*}
c\norm{\mathbf{x}}_ {\mathbf{A} ^L}^2 \leq 
\left\langle  \mathbf{P}_{AS}^{L}\mathbf{x},\mathbf{x}\right\rangle_ {\mathbf{A} ^L}
\leq C\norm{\mathbf{x}}_ {\mathbf{A} ^L}^2
\end{align*}
for all $\mathbf{x} \in \mathbb{R}^{N_L}$, and therefore the bounds for the minimal and maximal eigenvalues.
Clearly, the same holds for $\widetilde{\mathbf{P}}_{AS}^{L}$.
\end{proof}


\subsection{Inverse estimates for the fractional Laplacian} 
\label{sec:invest}

	In order to prove a strengthened Cauchy Schwarz inequality, an inverse inequality for the operator 
	$(-\Delta)^s$ of the form
	\begin{align}\label{eq:investS11}
	\norm{h^s(-\Delta)^s v}_{L^2(\Omega)} \lesssim \norm{v}_{\widetilde{H}^s(\Omega)}
	\end{align}
	is used, where $h \in L^\infty(\Omega)$ denotes the piecewise constant mesh width function 
	on a regular triangulation $\TT$ generated by NVB refinement of a given regular triangulation $\TT_0$.
	For the piecewise linear case $v \in S^{1,1}_0(\TT)$, this inverse estimate is proven in 
	\cite[Thm.~2.7]{faustmann2019quasi}.
	We stress that \eqref{eq:investS11} only holds for $s<3/4$, since in the converse case the 
	left-hand side is not well defined for $v \in S^{1,1}_0(\TT)$. To obtain an estimate for $s \in [3/4,1)$ 
one has to introduce a weight function 
	$w(x):= \operatorname*{inf}_{T \in \TT} \operatorname*{dist}(x,\partial T)$. Then, 
	\cite[Thm.~2.7]{faustmann2019quasi} provides the inverse estimate 
        \begin{align}\label{eq:investS12}
	  \norm{h^{1/2}w ^{s-1/2} (-\Delta)^s v}_{L^2(\Omega)} \lesssim 
	  \norm{v}_{\widetilde{H}^s(\Omega)}.
	\end{align}	
	For the case of piecewise constants, similar inverse estimates are stated in the lemma below. Here,
	we additionally stress that for $v \in S^{0,0}(\TT)$ and $x \in T \in \TT$ the estimate
\begin{align}\label{eq:L2condition}
\abs{(-\Delta)^s v(x)} &= 
\left| 
\int_{\R^d\backslash B_{\operatorname*{dist}(x,\partial T)}(x)}\frac{v(x)-v(y)}{\abs{x-y}^{d+2s}}dy
\right|
\lesssim \norm{v}_{L^{\infty}(\Omega)}
\int_{B_{\operatorname*{dist}(x,\partial T)}(x)^c} \frac{1}{\abs{x-y}^{d+2s}}dy\nonumber \\
&= \norm{v}_{L^{\infty}(\Omega)}
\int_{\nu \in \partial B_1(0)}
\int_{r = \operatorname*{dist}(x,\partial T) }^{\operatorname{diam}\Omega}r^{-2s-1} 
drd \nu 
\lesssim \norm{v}_{L^{\infty}(\Omega)}\operatorname*{dist}(x,\partial T)^{-2s}
\end{align}
gives 
	\begin{align*}
	w^\beta (-\Delta)^s v \in L^2(\Omega) \qquad \text{if} \; \; \beta > 2s-1/2.
	\end{align*}
For $s<1/4$, we may choose $\beta =0$ and for $1/4\leq s <1/2$, we may choose, e.g., $\beta = s$
or $\beta = 3/2s-1/4$ (to additionally ensure $\beta < s$)
to fulfill this requirement.

  \begin{lemma} \label{lem:invest}
  Let $\TT$ be a regular and $\gamma$-shape regular mesh generated by NVB refinement of a mesh $\TT_0$.
  	Let $v \in S^{0,0} (\mathcal{T})$,
	$h$ be the piecewise constant mesh width function 
	of the triangulation $\TT$, and set 
	$w(x):= \operatorname*{inf}_{T \in \TT} \operatorname*{dist}(x,\partial T)$.	
	Let $\beta > 2s-1/2$.
	Then, the inverse estimates
  	\begin{align}
  	\label{inverse estimate 1}
  	\| h ^s  (- \Delta)^s v\|_ {L^2 (\Omega)} & \leq C\| v\|_ {\widetilde{H} ^s  (\Omega)} \qquad 0 < s <  1/4,\\
  	\label{inverse estimate 2}
   	\| h ^{s-\beta} w ^{\beta}  (- \Delta)^s v\|_ {L^2 (\Omega)} &\leq C \| v\|_ {\widetilde{H}^s(\Omega)} \qquad  1/4 \leq s< 1/2
   	\end{align}	
		hold, where the constant $C>0$ depends only on $\Omega$, $d$, $s$, and 
		the $\gamma$-shape regularity of $\TT$.
  \end{lemma}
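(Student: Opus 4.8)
The plan is to reduce the piecewise-constant inverse estimates \eqref{inverse estimate 1}--\eqref{inverse estimate 2} to the already-established piecewise-linear inverse estimates \eqref{eq:investS11}--\eqref{eq:investS12} from \cite[Thm.~2.7]{faustmann2019quasi} by a projection-and-scaling argument. The natural bridge is the $L^2(\Omega)$-orthogonal projection $\Pi^0$ onto $S^{0,0}(\TT)$ together with a lifting: given $v \in S^{0,0}(\TT)$, one wants to compare $(-\Delta)^s v$ with $(-\Delta)^s (I^{SZ} v)$ for a Scott-Zhang type quasi-interpolant $I^{SZ}v \in S^{1,1}_0(\TT)$ of a suitable smoothed version of $v$, or more directly to argue by duality. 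I would proceed by duality: $\norm{h^s(-\Delta)^s v}_{L^2(\Omega)}$ (resp.\ the weighted norm) is the supremum over $\psi \in L^2(\Omega)$ with $\norm{\psi}_{L^2(\Omega)} \le 1$ of $\int_\Omega h^{2s}(-\Delta)^s v \,\psi\,dx$ (resp.\ with the weight $h^{s-\beta}w^\beta$ split symmetrically). Using the symmetry of $a(\cdot,\cdot)$, one has $\int_\Omega (-\Delta)^s v\, \phi\, dx = a(v,\phi)$ for $\phi \in \widetilde H^s(\Omega)$, so with $\phi := h^{2s}\psi$ (or the corresponding weighted test function), the quantity becomes $a(v,\phi) \le \norm{v}_{\widetilde H^s(\Omega)} \norm{\phi}_{\widetilde H^s(\Omega)}$, and it remains to show $\norm{h^{2s}\psi}_{\widetilde H^s(\Omega)} \lesssim \norm{\psi}_{L^2(\Omega)}$ uniformly — but this is false in general since $h^{2s}\psi$ need not be in $\widetilde H^s$. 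Hence I would instead test against a \emph{smoothed}, boundary-vanishing lift of $h^{2s}\psi$, e.g.\ $J_t$-type operators (Corollary~\ref{cor:H12-estimate-scott-zhang}) or a Scott–Zhang operator applied after multiplication by the mesh-size weight, exploiting that $h^{2s}$ varies slowly on patches.

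Concretely, the key steps are: (1) Write $(-\Delta)^s v = (-\Delta)^s(v - I^{SZ}_0 \widetilde v) + (-\Delta)^s (I^{SZ}_0 \widetilde v)$ for a suitable $\widetilde v \in S^{1,1}_0(\widetilde\TT)$ built from $v$ on a refinement $\widetilde\TT$, OR — and this is cleaner — directly estimate elementwise. On each $T\in\TT$, by \eqref{eq:L2condition} we have the pointwise bound $|(-\Delta)^s v(x)| \lesssim \norm{v}_{L^\infty(\Omega)}\operatorname{dist}(x,\partial T)^{-2s}$, but this crude bound loses too much; it only serves to guarantee membership in $L^2$ with the weight. (2) The honest estimate comes from the observation that for $v\in S^{0,0}(\TT)$ and a \emph{fixed} $T$, one can split the singular integral defining $(-\Delta)^s v(x)$ into the near-field (distances $\lesssim \operatorname{dist}(x,\partial T)$, which vanishes since $v$ is constant near $x$ inside $T$) and the far-field, and then compare with $(-\Delta)^s$ applied to a piecewise linear function that agrees with $v$ in an averaged sense. (3) Use the NVB structure: build $\widehat\TT := \fcc(\TT, \TT_{\rm fine})$ or simply one uniform NVB-refinement $\TT'$ of $\TT$, note $S^{0,0}(\TT)\subset S^{0,0}(\TT')$ and $h_{\TT'}\sim h_\TT$ on patches, and on the finer mesh use that a piecewise constant can be written as $v = \sum_{T'} c_{T'}\chi_{T'}$ where each $\chi_{T'}$ is controlled in $\widetilde H^{s}$-type negative norms. (4) Assemble via duality and the $\gamma$-shape regularity to absorb all constants.

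The main obstacle — and the step I expect to consume most of the work — is \textbf{establishing the right local-to-global estimate that converts the pointwise near-boundary blow-up of $(-\Delta)^s v$ into a weighted $L^2$ bound controlled by $\norm{v}_{\widetilde H^s(\Omega)}$}, precisely because $(-\Delta)^s$ is nonlocal: even though $v$ is elementwise constant, $(-\Delta)^s v(x)$ for $x$ near $\partial T$ feels the jumps of $v$ across \emph{all} element interfaces, weighted by $|x-y|^{-d-2s}$, so a naive elementwise argument does not close. I anticipate that the resolution mirrors the piecewise-linear proof in \cite[Thm.~2.7]{faustmann2019quasi}: decompose $\R^d \times \R^d$ dyadically in the distance to the diagonal and to $\partial\Omega$, handle the diagonal-singular part using the jump structure of $S^{0,0}(\TT)$ and shape regularity (this is where the threshold $s<1/2$ enters, since $\chi_T \in \widetilde H^{s}(\Omega)$ only for $s<1/2$ and $\beta>2s-1/2$ is exactly the Hardy-type condition making $w^\beta(-\Delta)^s\chi_T \in L^2$), and handle the far-field part by the boundedness of the Riesz-potential-type kernel. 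Once the piecewise-linear estimate is granted, the cleanest route is: for $v\in S^{0,0}(\TT)$ and the uniform refinement, reduce to a \emph{commutator} estimate $\norm{h^s(-\Delta)^s(v - \Pi^1 v)}_{L^2(\Omega)} \lesssim \norm{v}_{\widetilde H^s(\Omega)}$ with $\Pi^1$ a piecewise-linear Scott–Zhang interpolation, using that $v - \Pi^1 v$ has small $\widetilde H^s$-norm and good local support, plus \eqref{eq:investS11}--\eqref{eq:investS12} for the piecewise-linear part $\Pi^1 v$ — but here one must be careful that $\Pi^1 v$ need not vanish at $\partial\Omega$ unless one uses the boundary-preserving variant, so the final bookkeeping will require the boundary-adapted Scott–Zhang operators discussed after Corollary~\ref{cor:interpolation-0}.
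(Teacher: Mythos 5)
There is a genuine gap: none of the routes you sketch actually closes, and the one you single out as ``the cleanest route'' is circular. Writing $v = (v-\Pi^1 v) + \Pi^1 v$ and invoking \eqref{eq:investS11}--\eqref{eq:investS12} for the piecewise-linear part leaves you with the claim $\norm{h^{s-\beta}w^\beta(-\Delta)^s(v-\Pi^1 v)}_{L^2(\Omega)}\lesssim \norm{v}_{\widetilde H^s(\Omega)}$; but $v-\Pi^1 v$ is still discontinuous across element faces, so $(-\Delta)^s(v-\Pi^1 v)$ has exactly the same $\dist{x}{\partial T}^{-2s}$ blow-up near interfaces as $(-\Delta)^s v$ itself, and bounding it is precisely the statement of the lemma (for discontinuous piecewise polynomials). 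You give no mechanism for this beyond ``small $\widetilde H^s$-norm and good local support'', and the weight condition $\beta>2s-1/2$ --- which you correctly identify as a Hardy-type/integrability threshold --- never actually enters your estimates. Your other routes are either abandoned by you (the duality argument, which you yourself note fails because $h^{2s}\psi\notin\widetilde H^s(\Omega)$), dismissed as too crude (the pointwise bound \eqref{eq:L2condition}), or not developed in a summable way (decomposing $v=\sum_{T'}c_{T'}\chi_{T'}$ and estimating each characteristic function separately loses control of $\norm{v}_{\widetilde H^s(\Omega)}$). What remains is a correct diagnosis of the difficulty (nonlocality, near-interface singularity, the role of $s<1/2$ and $\beta>2s-1/2$) but no proof.

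For comparison, the paper does not reduce to the piecewise-linear estimate at all; it redoes the argument of \cite[Thm.~2.7]{faustmann2019quasi} directly for $S^{0,0}(\TT)$. Elementwise, one subtracts the patch average $c_T$ (killed by $(-\Delta)^s$) and splits $v-c_T$ with a cut-off $\chi_T$ into near-field $\chi_T(v-c_T)$ and far-field $(1-\chi_T)(v-c_T)$. In the near-field, the term with the kernel acting on $\chi_T$ is handled by the Lipschitz bound $\norm{\chi_T}_{W^{1,\infty}}\lesssim h_T^{-1}$ and a Poincar\'e inequality on $\omega(T)$; the term $\int \chi_T(y)(v(x)-v(y))|x-y|^{-d-2s}dy$ is where piecewise constancy is used (the integrand vanishes for $y\in T$), giving a bound by $\norm{v-c_T}_{L^\infty(\omega(T))}\,w(x)^{-2s}$, whose weighted $L^2(T)$-norm is finite exactly because $2\beta-4s>-1$, i.e.\ $\beta>2s-1/2$; an inverse estimate plus Poincar\'e then yields $h_T^{2\beta-2s}\norm{v}^2_{H^s(\omega(T))}$, which sums correctly. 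The far-field is not a ``Riesz-potential boundedness'' argument but is quoted from the Caffarelli--Silvestre extension/Caccioppoli lemma of \cite{faustmann2019quasi}, which applies to arbitrary $\widetilde H^s(\Omega)$ functions and nonnegative weight exponents. If you want to salvage your plan, the near-field mechanism above (patch-average subtraction, cut-off, and the vanishing of the integrand on $T$) is the missing ingredient you would have to supply.
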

 \begin{proof}
If we set $\beta = 0$ for $s<1/4$, 
we can prove both statements of the lemma at once by estimating the $L^2$-norms with the weight
 $h  ^{s- {\beta}} w ^{\beta}$.
 We follow the lines of \cite[Thm.~{2.7}]{faustmann2019quasi} starting with a splitting into a near-field and a far-field 
 part.

 For each $T \in \mathcal{T}$, we choose a cut-off function $\chi _T \in C ^ \infty _0 (\mathbb{R}^d)$
 with the properties: \linebreak 1) $\support  \chi _T \subset \omega  (T)$; 
 2) $\chi _T \equiv 1$ on a set $B$ satisfying $T \subset B \subset \omega  (T)$ and 
 $\operatorname*{dist}(B , \partial \omega(T)) \sim h_T$; 
 3) $\| \chi _T \| _ {W ^ {1, \infty}(\omega (T))} \lesssim h_T^{-1}$. 
 
 Moreover, for each $T \in \mathcal{T}$, we denote the average of $v$ on the patch $\omega(T)$
 by $c_T \in \R$. Since $c_T$ is a constant, we have  $(-\Delta)^s c_T \equiv 0$.
 Therefore, we can decompose $v$
  into the near-field 
 $ v^T _ {\text{near}} :=\chi _T(v-c_T)$ and the far-field $v^T _ {\text{far}} :=(1-\chi _T)( v-c_T)$,
 and obtain $(-\Delta)^s v =  (-\Delta)^s v^T_{\text{near}}+ (-\Delta)^sv^T _ {\text{far}}$.\\

 The estimates of the near-field and the far-field are rather similar to the case of piecewise linears 
 from \cite[Lem.~4.2, Lem.~4.4]{faustmann2019quasi}. Therefore, we quote the identical parts of the proof and outline the necessary 
 modifications for the piecewise constant case.
 
 We start with the near-field, where compared to the result for the case of piecewise linears, we do not 
 need to distinguish cases for $s$.
  The definition of the 
fractional Laplacian leads to
\begin{align}\label{eq:investtmp1}
\norm{w^\beta(-\Delta)^sv^T_{\text{near}}}_{L^2(T)}^2 &= 
\int_T w(x)^{2\beta}\left(\text{P.V.}\int_{\R^d}\frac{(v(x)-c_T)\chi_T(x)-(v(y)-c_T)\chi_T(y)}{\abs{x-y}^{d+2s}} dy\right)^2 dx
\nonumber \\& \lesssim 
\int_T w(x)^{2\beta}(v(x)-c_T)^2\left(\text{P.V.}\int_{\R^d}\frac{\chi_T(x)-\chi_T(y)}{\abs{x-y}^{d+2s}} dy\right)^2 dx
\nonumber \\& \quad
+\int_T w(x)^{2\beta}\left(\text{P.V.}\int_{\R^d}\chi_T(y)\frac{v(x)-v(y)}{\abs{x-y}^{d+2s}} dy\right)^2 dx. 
\end{align}
The first term on the right-hand side can be estimated using the Lipschitz continuity of $\chi_T$ and a 
Poincar\'e inequality on the patch $\omega(T)$ in the same way as in the proof of 
\cite[Lem.~4.2]{faustmann2019quasi} by 
\begin{align*}
\int_T w(x)^{2\beta}(v(x)-c_T)^2\left(\text{P.V.}\int_{\R^d}\frac{\chi_T(x)-\chi_T(y)}{\abs{x-y}^{d+2s}} dy\right)^2 dx
\lesssim h_T^{2\beta-2s}\norm{v}_{H^s(\omega(T))}^2.
\end{align*}
For the second term in \eqref{eq:investtmp1}, we stress that the integrand vanishes for $y\in T$ since $v$
is piecewise constant, and employ the same estimate as for \eqref{eq:L2condition}
to obtain 
\begin{align*}
\int_T w(x)^{2\beta}\left(\text{P.V.}\int_{\R^d}\chi_T(y)\frac{v(x)-v(y)}{\abs{x-y}^{d+2s}} dy\right)^2 dx
\lesssim \norm{v-c_T}_{L^\infty(\omega(T))}^2\int_T w(x)^{2\beta-4s} dx.
\end{align*}
Here, we added and subtracted the constant $c_T$ in the integrand and used the support properties of $\chi_T$ to 
obtain the $L^\infty$-norm on the patch.
As by choice of $\beta$, we always have $2\beta-4s>-1$, the last integral exists, and we can further estimate 
using a classical inverse estimate and a Poincar\'e inequality
\begin{align*}
\norm{v-c_T}_{L^\infty(\omega(T))}^2\int_T w(x)^{2\beta-4s} dx &\lesssim 
h_T^{2\beta-4s+d}\norm{v-c_T}_{L^\infty(\omega(T))}^2 \\ &\lesssim 
h_T^{2\beta-4s}\norm{v-c_T}_{L^2(\omega(T))}^2 \lesssim 
h_T^{2\beta-2s}\norm{v}_{H^s(\omega(T))}^2. 
\end{align*}
Inserting everything into \eqref{eq:investtmp1}, multiplying with $h_T^{2s-2\beta}$ and summing over all elements
$T\in \TT$ gives the desired estimate for the near-field.
 
 The far-field can be estimated using the Caffarelli-Silvestre extension, cf.~\cite{CafSil07} combined 
 with a Caccioppoli-type inverse estimate for the solution of the extension problem with boundary data 
 $(1-\chi_T)(v-c_T)$ as in \cite{faustmann2019quasi}. In fact, we observe that 
 \cite[Lem.~{4.4}]{faustmann2019quasi} holds for arbitrary  $v \in \widetilde H^s(\Omega)$ and weight functions
 $w$ with non-negative exponent. This directly gives 
  \begin{align*}
 \sum_{T \in \mathcal{T}} \|h  ^{s- {\beta}} w ^{\beta} (- \Delta)^sv^T _ {\text{far}} \|^2 _ {L^{2}(T)} 
 \lesssim \| v  \|^2 _ {\widetilde{H}^{s}(\Omega)},
 \end{align*}
 and combining the estimates for near- and far-field proves the lemma.
 \end{proof}

\subsection{Proof of the assumptions of Proposition~\ref{prop:Equiv}}
In order to apply  Proposition~\ref{prop:Equiv}, we show 
the existence of a stable decomposition 
(Lemma~\ref{lem:stabledec}) and 
a strengthened Cauchy-Schwarz inequality (Lemma~\ref{lem:strengthenedCS}). \bigskip

The following result relates the Scott-Zhang operators on two consecutive levels 
and is a key ingredient for the proof of Lemma~\ref{lem:stabledec}.

\begin{lemma}
\label{lemma:Isz-difference}
Let $p=1$ and let $\widetilde{\mathcal N}_\ell^1$, $\widetilde{\mathcal M}_\ell^1$ be defined 
in Section~\ref{sec:LMD-fcc}. 
The Scott-Zhang operators $\widetilde I^{SZ}_\ell:L^2(\Omega) \rightarrow S^{1,1}(\widetilde\TT_\ell)$
can be constructed such that, additionally, they satisfy, for all $\ell \in \BbbN$ and all $u \in L^2(\Omega)$ 
\begin{equation}
\label{eq:SZzero}
(\widetilde I^{SZ}_\ell - \widetilde I^{SZ}_{\ell-1})u(z) = 0 \qquad \forall
z \in \widetilde{\mathcal N}_{\ell}^1\setminus \widetilde {\mathcal M}_\ell^1.
\end{equation}
Also the Scott-Zhang operators $\widetilde I^{SZ}_{0,\ell}:L^2(\Omega) \rightarrow S^{1,1}_0(\widetilde \TT_\ell)$ 
can be constructed such that 
(\ref{eq:SZzero}) holds with 
$\widetilde I^{SZ}_{\ell}$ and 
$\widetilde I^{SZ}_{\ell-1}$ replaced with 
$\widetilde I^{SZ}_{0,\ell}$ and 
$\widetilde I^{SZ}_{0,\ell-1}$, respectively. 
\end{lemma}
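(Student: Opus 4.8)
The plan is to exhaust the freedom of choice of averaging elements left open in Definition~\ref{def:adapted-SZ}, building the operators recursively in $\ell$ and, for a node $z\in\widetilde{\mathcal N}_\ell^1\setminus\widetilde{\mathcal M}_\ell^1$, simply \emph{recycling} the averaging element that was used for $z$ at level $\ell-1$. The point is that for a Scott--Zhang operator one has $(\widetilde I^{SZ}_\ell u)(z)=\int_{T_z}\varphi^\ast_{z,T_z}u\,dx$, and that for a \emph{fixed} simplex $T_z$ with $z\in\overline{T_z}$ the dual function $\varphi^\ast_{z,T_z}\in P_p(T_z)$ is intrinsic to $T_z$ and does not depend on the mesh $T_z$ is viewed as an element of. Hence, as soon as we know that the level-$(\ell-1)$ choice $T_z^{(\ell-1)}$ is still an admissible averaging element of $\Tt_\ell=\fcc(\TT,\Th_\ell)$ at $z$, the choice $T_z^{(\ell)}:=T_z^{(\ell-1)}$ makes the two functionals at $z$ literally identical, so that $(\widetilde I^{SZ}_\ell-\widetilde I^{SZ}_{\ell-1})u(z)=0$ for all $u\in L^2(\Omega)$, which is exactly \eqref{eq:SZzero}.

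The heart of the matter is therefore the purely geometric statement: \textbf{if $z\in\widetilde{\mathcal N}_\ell^1\setminus\widetilde{\mathcal M}_\ell^1$ then $\mathcal{A}(z,\Tt_\ell)=\mathcal{A}(z,\Tt_{\ell-1})$}, and in fact I expect to show the stronger $\mathcal{A}(z,\Tt_\ell)\subset\TB_{1,\ell}\subset\TT$, i.e.\ every simplex of $\Tt_\ell$ (equivalently of $\Tt_{\ell-1}$) having $z$ as a vertex is a $\TT$-element that is strictly refined in $\Th_\ell$. The argument I would give is as follows. A vertex $z\notin\widetilde{\mathcal M}_\ell^1$ is an old vertex ($z\in\widetilde{\mathcal N}_{\ell-1}^1$) none of whose neighbours is created in the step $\Tt_{\ell-1}\to\Tt_\ell$; a newest-vertex-bisection check shows that bisecting \emph{any} simplex $T'$ having $z$ as a vertex produces a new vertex lying on an edge incident to $z$ (whether or not $z$ is the newest vertex of $T'$), so no element of $\Tt_{\ell-1}$ incident to $z$ can be refined in $\Tt_\ell$. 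By Lemma~\ref{lemma:chopping}\,(\ref{item:lemma:chopping-ii}),\,(\ref{item:lemma:chopping-iii}) an element of $\Tt_{\ell-1}$ is refined precisely when it lies in $\Th_{\ell-1}\setminus\TT$; hence all the star elements of $z$ lie in $\TT$, survive into $\Tt_\ell$, and — being strictly refined in $\Th_\ell$ since the levels $\Th_{\ell-1}\subset\Th_\ell$ are uniform — lie in $\TB_{1,\ell}$. A standard nesting argument (all meshes involved are NVB-refinements of $\Th_0$, and the surviving star already covers a full neighbourhood of $z$) then forces every element of $\Tt_\ell$ incident to $z$ to be one of those surviving $\TT$-elements, which yields $\mathcal{A}(z,\Tt_\ell)=\mathcal{A}(z,\Tt_{\ell-1})$.

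With this in hand the construction is: take any admissible pair $\widetilde I^{SZ}_0,\widehat I^{SZ}_0$ as in Definition~\ref{def:adapted-SZ}; assuming $\widetilde I^{SZ}_{\ell-1},\widehat I^{SZ}_{\ell-1}$ fixed, for $z\notin\widetilde{\mathcal M}_\ell^1$ set $T_z^{(\ell)}:=T_z^{(\ell-1)}$ (admissible by the previous paragraph) and for $z\in\widetilde{\mathcal M}_\ell^1$ choose $T_z^{(\ell)}$ as prescribed in Definition~\ref{def:adapted-SZ}. It then remains to check that this refined choice is still covered by Definition~\ref{def:adapted-SZ}, so that Lemma~\ref{lemma:scott-zhang-tilde-hat} keeps applying (and $L^2$-stability, the approximation property, and the projection property are automatic for any Scott--Zhang operator): for $z\notin\widetilde{\mathcal M}_\ell^1$ we have $\mathcal{A}(z,\Tt_\ell)\subset\TB_{1,\ell}$, so $z$ falls into case~(\ref{item:def-SZ-1bi}) of Definition~\ref{def:adapted-SZ}, and declaring $T_z^{(\ell)}=T_z^{(\ell-1)}\in\TT$ for $\widetilde I^{SZ}_\ell$ and a $\Th_\ell$-child of $T_z^{(\ell-1)}$ incident to $z$ for $\widehat I^{SZ}_\ell$ is admissible there and preserves the dichotomy used in the proof of Lemma~\ref{lemma:scott-zhang-tilde-hat} (either the two averaging sets coincide, or both are contained in a $\TT$-element). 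Hence $\widehat I^{SZ}_\ell u=\widetilde I^{SZ}_\ell u$ on $S^{p,1}(\TT)$, and \eqref{eq:SZzero} holds by the first paragraph. The $H^1_0$-variant $\widetilde I^{SZ}_{0,\ell}$ needs no new idea: it is obtained from $\widetilde I^{SZ}_\ell$ by deleting the shape functions of the boundary vertices, which changes neither the value at an interior vertex nor the relevant sets $\widetilde{\mathcal N}_\ell^1,\widetilde{\mathcal M}_\ell^1$, so the same argument goes through verbatim.

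I expect the geometric statement of the second paragraph to be the only real obstacle: one has to be sure that "no element incident to $z$ is refined" is genuinely implied by $z\notin\widetilde{\mathcal M}_\ell^1$, which rests on the newest-vertex-bisection bookkeeping (a refined element incident to $z$ always spawns a neighbour of $z$) together with the elementary mesh-hierarchy facts in Lemma~\ref{lemma:chopping}. Matching averaging elements across levels and re-checking compatibility with Definition~\ref{def:adapted-SZ} is then routine.
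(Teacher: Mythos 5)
Your proposal is, in substance, the paper's own proof: you build the operators inductively and, at a node $z\in\widetilde{\mathcal N}_\ell^1\setminus\widetilde{\mathcal M}_\ell^1$, recycle the averaging simplex chosen at level $\ell-1$, which is admissible because such a $z$ is an old node whose patch is unchanged, so every element of $\Tt_{\ell-1}$ touching $z$ lies in $\TT$ (Lemma~\ref{lemma:chopping}~(\ref{item:lemma:chopping-iii})) and persists into $\Tt_\ell$ (Lemma~\ref{lemma:chopping}~(\ref{item:lemma:chopping-ii})); identical averaging simplices give identical functionals, hence \eqref{eq:SZzero}, and the $H^1_0$-variant follows by dropping the boundary shape functions, exactly as in the paper. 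Two details in your write-up are inaccurate but harmless: first, your NVB claim that bisecting a simplex with vertex $z$ creates a new vertex \emph{on an edge incident to $z$} is false when the refinement edge is opposite $z$; what is true, and all you need, is that the new vertex is a vertex of the children containing $z$ and hence a new neighbour of $z$ --- or, more directly as in the paper, an unchanged patch means the incident elements are common to $\Tt_{\ell-1}$ and $\Tt_\ell$ and therefore lie in $\TT$. Second, such a node does \emph{not} fall into case~\ref{item:def-SZ-1bi} of Definition~\ref{def:adapted-SZ}: since no element of $\Th_\ell\cap\Tt_\ell$ touches $z$, the node is assigned only in steps~(\ref{item:def-SZ-2}) and~(\ref{item:def-SZ-3}) of that definition, where the freedom in the loop order lets you keep $T_z\in\TT$ for $\widetilde I^{SZ}_\ell$ (and a $\Th_\ell$-descendant of it for $\widehat I^{SZ}_\ell$), so the dichotomy used in Lemma~\ref{lemma:scott-zhang-tilde-hat} is preserved as you intend.
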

\begin{proof}
We only consider the case of the operators $\widetilde I^{SZ}_\ell$. We also recall that for the present
case $p=1$ the nodes coincide with the nodes of the triangulations. 
\newline
\emph{Step 1:} $z \in \widetilde {\mathcal N}_{\ell}^1 \setminus \widetilde{\mathcal M}_\ell^1$ implies
$z \in \widetilde {\mathcal N}_\ell^1 \cap \widetilde{\mathcal N}_{\ell-1}^1$. To see $z \in \widetilde{\mathcal N}_{\ell-1}^1$, we note 
$\widetilde{\mathcal N}_{\ell-1}^1 \subset \widetilde {\mathcal N}_\ell^1$ by 
Lemma~\ref{lemma:chopping}  and therefore that 
$z \in \widetilde {\mathcal N}_\ell^1 \setminus \widetilde{\mathcal M}_\ell^1 \subset 
\widetilde{\mathcal N}_\ell^1 \setminus (\widetilde{\mathcal N}_\ell^1\setminus \widetilde{\mathcal N}_{\ell-1}^1) = 
\widetilde{\mathcal N}_{\ell-1}^1$. 
\newline 
\emph{Step 2:} $z \in \widetilde {\mathcal N}_{\ell}^1 \setminus \widetilde{\mathcal M}_\ell^1$ implies
that all elements of the patches $\omega_\ell(z)$ and $\omega_{\ell-1}(z)$ are in $\TT$. To see this, 
we note $z \in \widetilde {\mathcal N}_\ell^1 \setminus\widetilde{\mathcal M}_\ell^1 \subset 
\widetilde{\mathcal N}_\ell^1\setminus \{z \in \widetilde{\mathcal N}_\ell^1 \cap \widetilde{\mathcal N}_{\ell-1}^1\,|\, 
\omega_\ell(z) \subsetneq \omega_{\ell-1}(z)\}$. If $\omega_{\ell-1}(z) = \omega_\ell(z)$, then all elements of 
$\omega_{\ell-1}(z) = \omega_\ell(z)$ must be elements of $\TT$. 
\newline
\emph{3.~step:} The basic idea for the choice of averaging sets $T_z$ in the construction of 
$\widetilde I^{SZ}_{\ell-1}$ and $\widetilde I^{SZ}_{\ell}$ in Def.~\ref{def:adapted-SZ} is to select 
an element of $\TT$ whenever possible. 

Our modified construction of the operators $\widetilde I^{SZ}_\ell$ is by induction on $\ell$ and 
carefully exploits the freedom left in the choice of the averaging sets $T_z$ in Def.~\ref{def:adapted-SZ}. 
We start with an $\widetilde I^{SZ}_0$ as constructed in Def.~\ref{def:adapted-SZ}. Suppose the averaging sets 
$T_z$ for $\widetilde \TT_{\ell-1}$ have been fixed. Effectively, Def.~\ref{def:adapted-SZ} performs a loop over 
all nodes of $\widetilde\TT_\ell$. When assigning an averaging set $T_z$ to a node 
$z \in \widetilde{\mathcal N}_\ell^1\setminus \widetilde{\mathcal M}_\ell^1$, we select as $T_z$ 
the element that has already been selected on the preceding level $\ell-1$. This is possible since 
$z \in \widetilde{\mathcal N}_\ell^1\setminus \widetilde{\mathcal M}_\ell^1$ implies $z \in \widetilde{\mathcal N}_{\ell-1}^1$
by Step~1 and by Step~2 we know that all elements of both $\widetilde \TT_{\ell-1}$ and $\widetilde \TT_{\ell}$ 
having $z$ as a vertex are elements of $\TT$. 
\end{proof}

The following lemma provides the existence of a stable decomposition for the mesh hierarchy generated 
by the finest common coarsening.
Rather than analyzing the $L^2$-orthogonal projection onto a space of piecewise polynomials on a uniform mesh, as in
\cite{feischl2017optimal}, we use the result of Corollary~\ref{cor:lowerbound}.

\begin{lemma}\label{lem:stabledec}
(Stable decomposition for the mesh hierarchy $(\widetilde\TT_\ell)_\ell$).
For every $u \in \widetilde{V}_{L}$  there is a decomposition
$
u = \sum_{\ell=0}^L  u_\ell 
$
with $u_\ell \in \widetilde{\mathcal{V}}_\ell$ satisfying the stability estimate 
\begin{align*}
\sum_{\ell=0}^{L}\widetilde a_\ell(u_\ell,u_\ell) = 
\sum_{\ell=0}^{L}\sum_{z \in \widetilde{\mathcal{M}}_\ell}\norm{\widehat{h}_\ell^{-s}u_\ell(z)\widetilde{\varphi}_z^\ell}_{L^2(\Omega)}^2 
\le C_{\rm stab}^2 \norm{u}_{\widetilde{H}^s(\Omega)}^2.
\end{align*} 
with a constant $C_{\rm stab}>0$ depending only on $\Omega,d,s$, and the initial triangulation $\TT_0$.
\end{lemma}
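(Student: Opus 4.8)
The plan is to construct the decomposition $u=\sum_{\ell=0}^{L}u_\ell$ by telescoping a level-dependent projection and to read off the stability bound directly from the decay estimate of Corollary~\ref{cor:lowerbound}. Consider first the piecewise linear case $\widetilde V_L=S^{1,1}_0(\widetilde\TT_L)$. Take the boundary-preserving adapted Scott--Zhang operators $\widetilde I^{SZ}_{0,\ell}\colon L^2(\Omega)\to S^{1,1}_0(\widetilde\TT_\ell)$ in the refined form of Lemma~\ref{lemma:Isz-difference}, and set $u_0:=\widetilde I^{SZ}_{0,0}u$ and $u_\ell:=(\widetilde I^{SZ}_{0,\ell}-\widetilde I^{SZ}_{0,\ell-1})u$ for $1\le\ell\le L$. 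Because $\widetilde\TT_\ell$ refines $\widetilde\TT_{\ell-1}$ (Lemma~\ref{lemma:chopping}), the spaces $S^{1,1}_0(\widetilde\TT_\ell)$ are nested, so $u_\ell\in S^{1,1}_0(\widetilde\TT_\ell)=\widetilde V_\ell$; and since $\widetilde I^{SZ}_{0,L}$ is a projection onto $S^{1,1}_0(\widetilde\TT_L)$ and $u\in\widetilde V_L$, the sum collapses to $\sum_{\ell=0}^{L}u_\ell=\widetilde I^{SZ}_{0,L}u=u$.

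The crucial point is that $u_\ell$ actually lies in the \emph{subspace} $\widetilde{\mathcal V}_\ell=\operatorname*{span}\{\widetilde\varphi_z^\ell:z\in\widetilde{\mathcal M}_\ell\}$, not merely in $\widetilde V_\ell$: by Lemma~\ref{lemma:Isz-difference} one has $u_\ell(z)=(\widetilde I^{SZ}_{0,\ell}-\widetilde I^{SZ}_{0,\ell-1})u(z)=0$ for every $z\in\widetilde{\mathcal N}_\ell^1\setminus\widetilde{\mathcal M}_\ell^1$, hence $u_\ell=\sum_{z\in\widetilde{\mathcal M}_\ell}u_\ell(z)\widetilde\varphi_z^\ell\in\widetilde{\mathcal V}_\ell$ (for $\ell=0$, $\widetilde{\mathcal M}_0=\widetilde{\mathcal N}_0$, so $\widetilde{\mathcal V}_0=\widetilde V_0$). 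I would then reduce the local solver to an $L^2$-quantity: by Lemma~\ref{lem:Ml} the nodes in $\widetilde{\mathcal M}_\ell$ and their patches form a quasi-uniform submesh of size $\simeq\widehat h_\ell$, so $\|\widetilde\varphi_z^\ell\|_{L^2(\Omega)}^2\simeq\widehat h_\ell^d$, the supports overlap boundedly, and $\sum_{z\in\widetilde{\mathcal M}_\ell}\widehat h_\ell^d|v(z)|^2\simeq\|v\|_{L^2(\Omega)}^2$ for $v\in\widetilde{\mathcal V}_\ell$; consequently $\widetilde a_\ell(u_\ell,u_\ell)\simeq\widehat h_\ell^{-2s}\|u_\ell\|_{L^2(\Omega)}^2$. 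Writing $u_\ell=(u-\widetilde I^{SZ}_{0,\ell}u)-(u-\widetilde I^{SZ}_{0,\ell-1}u)$ for $\ell\ge1$, using $\widehat h_\ell=\widehat h_{\ell-1}/2$ and the triangle inequality, summing over $\ell$, and handling $\ell=0$ via the $L^2$-stability of $\widetilde I^{SZ}_{0,0}$, one arrives at
\[
 \sum_{\ell=0}^{L}\widetilde a_\ell(u_\ell,u_\ell)\;\lesssim\;\|u\|_{L^2(\Omega)}^2+\sum_{\ell\ge0}\widehat h_\ell^{-2s}\bigl\|u-\widetilde I^{SZ}_{0,\ell}u\bigr\|_{L^2(\Omega)}^2\;\lesssim\;\|u\|_{\widetilde H^s(\Omega)}^2,
\]
where the last step is Corollary~\ref{cor:lowerbound} applied with the NVB mesh $\TT_L$ (legitimate since $\widetilde V_L=S^{1,1}_0(\widetilde\TT_L)\subset S^{1,1}_0(\TT_L)$, because $\widetilde\TT_L=\fcc(\TT_L,\widehat\TT_L)$ is a coarsening of $\TT_L$) together with $\|u\|_{L^2(\Omega)}\le\|u\|_{\widetilde H^s(\Omega)}$.

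The piecewise constant case $\widetilde V_L=S^{0,0}(\widetilde\TT_L)$, $0<s<1/2$, runs along the same lines with $\widetilde I^{SZ}_{0,\ell}$ replaced by the elementwise $L^2$-projection $\Pi_\ell$ onto $S^{0,0}(\widetilde\TT_\ell)$: here $u=\Pi_L u=\sum_{\ell=0}^{L}u_\ell$ with $u_0=\Pi_0u$, $u_\ell=(\Pi_\ell-\Pi_{\ell-1})u$, and since $\Pi_\ell$ averages elementwise, $u_\ell$ vanishes on every element common to $\widetilde\TT_\ell$ and $\widetilde\TT_{\ell-1}$, hence is supported on the new elements and lies in $\widetilde{\mathcal V}_\ell$, with again $\widetilde a_\ell(u_\ell,u_\ell)\simeq\widehat h_\ell^{-2s}\|u_\ell\|_{L^2(\Omega)}^2$. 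For the remaining bound $\sum_\ell\widehat h_\ell^{-2s}\|u-\Pi_\ell u\|_{L^2(\Omega)}^2\lesssim\|u\|_{\widetilde H^s(\Omega)}^2$ I would exploit that $u\in S^{0,0}(\widetilde\TT_L)$ is constant on every element of $\TT_L$, so that the only elements of $\widetilde\TT_\ell$ carrying a nonzero projection error are those elements of $\widehat\TT_\ell$ on which $u$ is not already piecewise constant; this gives $\|u-\Pi_\ell u\|_{L^2(\Omega)}=\|u-\widehat\Pi_\ell u\|_{L^2(\Omega)}$ with $\widehat\Pi_\ell$ the $L^2$-projection onto $S^{0,0}(\widehat\TT_\ell)$ on the uniform mesh, and the ensuing sum is controlled by the classical multilevel estimate for piecewise constants on uniformly refined meshes (valid for $s<1/2$) and the equivalence of $H^s(\Omega)$ and $\widetilde H^s(\Omega)$ for $s<1/2$.

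I expect the main obstacle to be the step placing $u_\ell$ in the subspace $\widetilde{\mathcal V}_\ell$ rather than only in $\widetilde V_\ell$: in the piecewise linear case this is precisely the purpose of Lemma~\ref{lemma:Isz-difference} and of the carefully tuned choice of averaging elements in Definition~\ref{def:adapted-SZ}, and in the piecewise constant case it rests on the elementwise nature of $\Pi_\ell$ together with the nestedness properties of the finest common coarsening (Lemma~\ref{lemma:chopping}). Everything else --- the equivalence $\widetilde a_\ell(\cdot,\cdot)\simeq\widehat h_\ell^{-2s}\|\cdot\|_{L^2(\Omega)}^2$ on $\widetilde{\mathcal V}_\ell$ and the telescoping summation --- is routine, the stability being inherited from Corollary~\ref{cor:lowerbound}.
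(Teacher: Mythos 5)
Your proposal is correct and follows essentially the same route as the paper's proof: the telescoping decomposition via differences of the adapted Scott--Zhang operators of Lemma~\ref{lemma:Isz-difference}, membership $u_\ell\in\widetilde{\mathcal V}_\ell$ via \eqref{eq:SZzero}, the reduction of $\widetilde a_\ell(u_\ell,u_\ell)$ to $\widehat h_\ell^{-2s}\|u_\ell\|_{L^2(\Omega)}^2$ by the hat-function scaling, nodal inverse estimate and Lemma~\ref{lem:Ml}, and the final appeal to Corollary~\ref{cor:lowerbound}. Your elementwise-$L^2$-projection treatment of the piecewise constant case is more explicit than the paper's one-line remark that this case is ``even simpler'' due to $L^2$-orthogonality, but it is consistent with it.
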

\begin{proof}
We only show the case of piecewise linears, the piecewise constant case is even simpler as the basis functions 
are $L^2$-orthogonal.

Let $\widetilde I^{SZ}_{0,\ell}:\widetilde H^s(\Omega)\rightarrow S_0^{1,1}(\widetilde \TT_\ell)$ be the
adapted Scott-Zhang projection 
from Definition~\ref{def:adapted-SZ} in the form given by Lemma~\ref{lemma:Isz-difference}. 
Set $\widetilde I^{SZ}_{0,-1}=0$. Then, we define 
\begin{align*}
u_\ell := \sum_{z \in \widetilde{\mathcal{M}}_\ell}(\widetilde I^{SZ}_{0,\ell} - \widetilde I^{SZ}_{0,\ell-1})u(z) \widetilde \varphi_z^{\ell}.
\end{align*}

	Since $(\widetilde I^{SZ}_{0,\ell} - \widetilde I^{SZ}_{0,\ell-1})u \in \widetilde{\mathcal V}_\ell$, we may decompose using a telescoping series and \eqref{eq:SZzero}
	\begin{align}\label{eq:stabledec}
	u =
 \widetilde I^{SZ}_{0,L} u =
 \sum_{\ell=0}^{L}(\widetilde I^{SZ}_{0,\ell} - \widetilde I^{SZ}_{0,\ell-1})u
	= 
	\sum_{\ell=0}^{L}\sum_{z \in \widetilde{\mathcal{M}}_\ell}(\widetilde I^{SZ}_{0,\ell} - \widetilde I^{SZ}_{0,\ell-1})u(z) \widetilde \varphi_z^{\ell}
	=\sum_{\ell=0}^Lu_\ell.
	\end{align}
	The standard scaling of the hat functions in $L^2$ provides
	\begin{align} \label{eq:lowerboundscaling}
	\norm{\widetilde \varphi _z ^{\ell}}_{L^2(\Omega)}^2 \simeq h_\ell (z)^{d},
	\end{align}
	with $h_\ell (z)$ denoting the maximal mesh width on the patch corresponding to the node $z$.
	In the following, we prove the stability of the decomposition \eqref{eq:stabledec}.
	With \eqref{eq:SZzero}, \eqref{eq:lowerboundscaling} and an inverse estimate --
	cf.~\cite[Proposition {3.10}]{dahmen2004inverse}, which provides an estimate for the 
	nodal value of a piecewise linear function on the mesh $\widetilde{\TT}_\ell$ 
	by its $L^2$-norm on the patch of the node -- we estimate
	\begin{align}\label{Lower bound 13}
	\nonumber \sum_{\ell=0}^{L}\sum_{z \in \widetilde{\mathcal{M}}_\ell}\norm{\widehat h_\ell^{-s}(\widetilde I^{SZ}_{0,\ell} - \widetilde I^{SZ}_{0,\ell-1})u(z) \widetilde \varphi _z^{\ell}}_{L^2(\Omega)}^2 
	&\lesssim \sum_{\ell=0}^{L}\widehat h_\ell^{-2s}\sum_{z \in \widetilde{\mathcal{M}}_\ell} h_\ell(z)^{d} | (\widetilde I^{SZ}_{0,\ell} - \widetilde I^{SZ}_{0,\ell-1})u(z)| ^2\\
	&\lesssim \sum_{\ell=0}^{L}\widehat h_\ell^{-2s}\sum_{z \in \widetilde{\mathcal{N}}_\ell}\norm{(\widetilde I^{SZ}_{0,\ell} - \widetilde I^{SZ}_{0,\ell-1})u}^2_ {L^2 (\omega_\ell(z))}\nonumber \\
        & \lesssim \sum_{\ell=0}^{L}\widehat{h}^ {-2s} _{\ell}\sum_{T\in \widetilde{\mathcal{T}}_{\ell}} \norm{(\widetilde I^{SZ}_{0,\ell} -\widetilde I^{SZ}_{0,\ell-1})u}^2_ {L^2 (T)}.  
	\end{align}
	Finally, we can use 
	Corollary~\ref{cor:lowerbound} to  obtain 
	\begin{align}\label{Lower bound 15}
	\sum_{\ell=0}^{L}\widetilde a_\ell(u_\ell,u_\ell)
	&\lesssim  \sum_{\ell=0} ^{L}\widehat{h}^{-2s}_{\ell} \norm{(\widetilde I^{SZ}_{0,\ell} -\widetilde I^{SZ}_{0,\ell-1})u}^2_ {L^2 (\Omega)}
	\lesssim \norm{u}_{\widetilde{H}^s(\Omega)}^2,
	\end{align} 
	which proves the existence of a stable decomposition.
\end{proof}

The following lemma shows that the submesh consisting of the elements corresponding 
to the points in $\widetilde{\mathcal{M}}_{\ell}$
is indeed quasi-uniform.

\begin{lemma} \label{lem:Ml}
Let $\widetilde{\mathcal{M}}_{\ell}$ be defined 
in Section~\ref{sec:LMD-fcc} and let $z \in \widetilde{\mathcal{M}}_{\ell}$, then it holds 
$h _{ \ell}(z) \simeq \widehat{h}_{\ell}$, where $h _{ \ell}(z) $ denotes the 
maximal mesh width on the patch $\omega_\ell(z)$. In particular, we have 
$\widetilde{\mathcal{V}_\ell} \subset \widehat{V}_\ell$, meaning 
$\widetilde{\mathcal{V}_\ell} \subset \widehat{V}_\ell^0$ if $\widetilde{\mathcal{M}}_{\ell} = 
\widetilde{\mathcal{M}}_{\ell}^0$ and 
$\widetilde{\mathcal{V}_\ell} \subset \widehat{V}_\ell^1$ if $\widetilde{\mathcal{M}}_{\ell} = 
\widetilde{\mathcal{M}}_{\ell}^1$.
\end{lemma}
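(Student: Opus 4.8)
The plan is to prove the mesh-size equivalence $h_\ell(z) \simeq \widehat h_\ell$ for $z \in \widetilde{\mathcal M}_\ell$ by exploiting the structure of $\widetilde\TT_\ell = \fcc(\TT_L,\widehat\TT_\ell)$ together with the defining property of $\widetilde{\mathcal M}_\ell$, namely that its nodes are either genuinely new on level $\ell$ or have a strictly shrinking patch. First I would note the trivial upper bound: every element of $\widetilde\TT_\ell = \fcc(\TT_L,\widehat\TT_\ell)$ is a (possibly trivial) coarsening of an element of $\widehat\TT_\ell$, hence has diameter $\gtrsim \widehat h_\ell$ only if it actually came from $\TT_L$; more precisely, by the definition of $\fcc$ each element of $\widetilde\TT_\ell$ is either an element of $\widehat\TT_\ell$ (diameter $\simeq \widehat h_\ell$) or an element of $\TT_L$ that is coarser than some element of $\widehat\TT_\ell$, hence has diameter $\gtrsim \widehat h_\ell$. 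So the lower bound $h_\ell(z) \gtrsim \widehat h_\ell$ is automatic, and the content of the lemma is the \emph{upper} bound $h_\ell(z) \lesssim \widehat h_\ell$, i.e.\ that no element of the patch $\omega_\ell(z)$ is substantially coarser than a uniform-level-$\ell$ element.

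The key step is a case distinction on why $z \in \widetilde{\mathcal M}_\ell$. In the piecewise-constant case $\widetilde{\mathcal M}_\ell^0 = \widetilde{\mathcal N}_\ell^0 \setminus \widetilde{\mathcal N}_{\ell-1}^0$ consists of barycenters of elements $T \in \widetilde\TT_\ell \setminus \widetilde\TT_{\ell-1}$; by Lemma~\ref{lemma:chopping}\,(\ref{item:lemma:chopping-iii}) such a $T$ lies in $\TB_{2,\ell}\subset\widehat\TT_\ell$ (it cannot be one of the ``frozen'' $\TT_L$-elements, since those persist), hence $\diam T \simeq \widehat h_\ell$, and the patch $\omega_\ell(z)$ of a barycenter is just $T$ itself. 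In the piecewise-linear case $\widetilde{\mathcal M}_\ell^1$ contains new vertices $\widetilde{\mathcal N}_\ell^1\setminus\widetilde{\mathcal N}_{\ell-1}^1$ and old vertices with $\omega_\ell(z) \subsetneq \omega_{\ell-1}(z)$. A new vertex must be the midpoint of a refined edge, so it lies in the interior of some $T \in \widehat\TT_{\ell}$ that was freshly created (again in $\TB_{2,\ell}$ by Lemma~\ref{lemma:chopping}), whose neighbours have comparable size by shape-regularity; for an old vertex with strictly shrinking patch, at least one element touching $z$ was refined between level $\ell-1$ and $\ell$, so (by uniform refinement of $\widehat\TT$ and NVB compatibility) that element, and by shape-regularity all elements of $\omega_\ell(z)$, have diameter $\simeq\widehat h_\ell$.

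The main obstacle I anticipate is making rigorous the claim ``all elements touching a node whose patch shrank have size $\simeq \widehat h_\ell$'': one has to rule out that $\omega_\ell(z)$ still contains a large element of $\TT_L$ sitting next to a small refined region. This is handled by observing that $\widetilde\TT_\ell$, being a regular triangulation by Lemma~\ref{lemma:fcc}, satisfies a local quasi-uniformity (bounded ratio of diameters of face-neighbouring elements) inherited from the $\gamma$-shape-regularity assumed throughout; combined with the fact that \emph{some} element of $\omega_\ell(z)$ has diameter $\simeq \widehat h_\ell$, all of them do. Finally, the ``in particular'' clause is immediate: once every element of $\omega_\ell(z)$ has size $\simeq\widehat h_\ell$ for each $z\in\widetilde{\mathcal M}_\ell$, the union $\bigcup_{z\in\widetilde{\mathcal M}_\ell}\omega_\ell(z)$ carries a uniform submesh of $\widehat\TT_\ell$ (up to shape-regular distortion), so the basis functions $\widetilde\varphi_z^\ell$ generating $\widetilde{\mathcal V}_\ell$ coincide with the corresponding basis functions on $\widehat\TT_\ell$, giving $\widetilde{\mathcal V}_\ell\subset\widehat V_\ell$ with the asserted matching of $V^0$/$V^1$.
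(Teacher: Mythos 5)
There is a genuine gap, and it sits exactly where the paper's proof does its real work. Your argument rests on the claim that every new element $T\in\Tt_\ell\setminus\Tt_{\ell-1}$ belongs to $\Th_\ell$ (``it cannot be one of the frozen $\TT_L$-elements, since those persist''). Lemma~\ref{lemma:chopping} only gives \emph{forward} persistence: an element of $\Tt_\ell\cap\TT_L$ stays in all finer levels. It does not prevent a coarse element of $\TT_L$ from entering the hierarchy \emph{for the first time} at level $\ell$ as an element of $\TB_{1,\ell}$. This happens whenever $T\in\TT_L$ is strictly contained in an element of $\Th_{\ell-1}$ but strictly contains elements of $\Th_\ell$; e.g.\ in $d=2$, if $T$ is a single bisection of an element of $\Th_0$, then $\Tt_0$ contains the initial element (not $T$), while $\Tt_1$ contains $T$, which is new at level $1$ but is \emph{not} an element of $\Th_1$. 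Such newly appearing $\TT_L$-elements (and their vertices, which can be new nodes of $\Tt_\ell$, and which can sit in patches that shrank) are precisely the case your case distinction dismisses, both for $\widetilde{\mathcal M}_\ell^0$ and in both subcases for $\widetilde{\mathcal M}_\ell^1$; without handling it, the upper bound $h_\ell(z)\lesssim\widehat h_\ell$ is not established.

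The paper closes exactly this case: for a new $T\in\TB_{1,\ell}$ one compares $T$ with $\Th_{\ell-1}$ (all meshes being NVB-descendants of $\Th_0$, the elements are nested), and observes that $T\supsetneq\widehat T_{\ell-1}$ for some $\widehat T_{\ell-1}\in\Th_{\ell-1}$ would force $T\in\TB_{1,\ell-1}\subset\Tt_{\ell-1}$, contradicting newness; hence $T\subseteq\widehat T_{\ell-1}$ and $\widehat h_\ell\leq h_T\lesssim\widehat h_{\ell-1}\simeq\widehat h_\ell$. With that step inserted, the rest of your plan (lower bound from the $\fcc$ structure, reduction of the $\widetilde{\mathcal M}_\ell^1$ case to the existence of a new element in $\omega_\ell(z)$, and local quasi-uniformity of the regular shape-regular mesh $\Tt_\ell$ to propagate the size bound over the patch) matches the paper's argument. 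Two minor inaccuracies to fix as well: a new vertex is a vertex of new elements, not an interior point of one; and the concluding inclusion $\widetilde{\mathcal V}_\ell\subset\widehat V_\ell$ should be argued via nestedness (each element of $\Tt_\ell$ is a union of elements of $\Th_\ell$, so $S^{p,m}(\Tt_\ell)\subset S^{p,m}(\Th_\ell)$), not by claiming the basis functions coincide with those of $\Th_\ell$ --- for a new element of $\TB_{1,\ell}$ of size between $\widehat h_\ell$ and $2\widehat h_\ell$ they do not.
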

\begin{proof}
We first note that if $T \in \widetilde{\TT}_{\ell} \setminus \widetilde{\TT}_{\ell-1}$, then 
$h_T \simeq \widehat{h}_{\ell}$. If 
$T \notin \TB_{1,\ell}$  for the first set  in the definition of the finest common coarsening (\ref{eq:def-fcc}),
then $T \in \widehat{\TT}_\ell$ and $h_T \simeq \widehat{h}_{\ell}$ follows since the mesh $\widehat{\TT}_\ell$ 
is quasi-uniform. 
Now, let $T \in \TB_{1,\ell}$, which implies $T \in \TT$, and that $T$ is a proper 
superset of an element $\widehat T_\ell \in \widehat{\TT}_\ell$, i.e., $h_T \geq \widehat{h}_{\ell}$. 
Since $\TT$ and $\widehat \TT_{\ell-1}$ are NVB-refinements of the same mesh, we either have 
$T \subset \widehat T_{\ell-1}$, $T = \widehat T_{\ell-1}$ or 
$T\supset \widehat T_{\ell-1}$ for some element $\widehat T_{\ell-1} \in \widehat{\TT}_{\ell-1}$. For the first two cases, we have $h_T \lesssim \widehat{h}_{\ell-1} \simeq 2\widehat{h}_{\ell}$,
which gives $h_T \simeq \widehat{h}_{\ell}$.
The third case $T\supset \widehat T_{\ell-1}$ implies that $T \in \TB_{1,\ell-1}$ and therefore $T \in 
\widetilde{\TT}_{\ell-1}$, which contradicts the assumption 
$T \in \widetilde{\TT}_{\ell} \setminus \widetilde{\TT}_{\ell-1}$.

This immediately proves the case $\widetilde{\mathcal{M}}_{\ell} = \widetilde{\mathcal{M}}_{\ell}^0$, since 
new points in $\widetilde{\mathcal{M}}_{\ell}^0$ (barycenters) correspond to new elements in 
$\widetilde{\TT}_{\ell} \setminus \widetilde{\TT}_{\ell-1}$.

For the case $\widetilde{\mathcal{M}}_{\ell} = \widetilde{\mathcal{M}}_{\ell}^1$,
let $z \in \widetilde{\mathcal{M}}_{\ell}$. By definition, this implies that there exists (at least) one element 
$T_z \in \omega_{\ell}(z)$ with $T_z \in \widetilde{\TT}_{\ell} \setminus \widetilde{\TT}_{\ell-1}$. The 
previous discussion gives $h_{T_z}\simeq \widehat{h}_{\ell}$. By shape-regularity this gives that 
$h_\ell(z) = \max_{T\in \omega_\ell(z)}h_T \simeq \widehat{h}_{\ell}$.
\end{proof}

With the inverse estimate of the previous subsection we now prove a 
strengthened Cauchy-Schwarz inequality.

\begin{lemma}\label{lem:strengthenedCS}
(Strengthened Cauchy-Schwarz inequality for the mesh hierarchy $(\widetilde\TT_\ell)$:) 
Let 
$u_{\ell} \in \widetilde{\mathcal{V}}_\ell$ for $\ell =0,1,...,L$. Then, we have 
\begin{align*}
a( u_m,u_k) \le \mathcal{E}_{km}
\norm{u _m}_{\widetilde H^s(\Omega)}\norm{\widehat h_k^{-s} u_k}_{L^2(\Omega)} \qquad  0 \le m \le  k \le L,
\end{align*}
with $\mathcal{E}_{km} = C_{\rm CS}  {( \widehat{h}_k / \widehat{h}_m)}^ {s-\beta}$.
Here, $\beta$ is given as $\beta =  \begin{cases}
  0& \text{for $0 < s < \frac 1 4$}\\
   \frac 3 2 s -\frac 1 4& \text{for $\frac 1 4 \leq s < \frac 1 2$}\\
 \end{cases}$ for the piecewise constant case and 
$\beta = \max\{s-1/2,0\}$ for the piecewise linear case.
Moreover,  the appearing constant $C_{\rm CS}>0$ depends only on $\Omega, d, s$ and the initial mesh  $\TT_0$.
\end{lemma}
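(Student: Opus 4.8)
The plan is to exploit the key inverse estimate for the fractional Laplacian (Lemma~\ref{lem:invest} in the piecewise constant case, and \cite[Thm.~2.7]{faustmann2019quasi} together with \eqref{eq:investS12} in the piecewise linear case) combined with the fact that $u_m \in \widetilde{\mathcal V}_m$ lives on a \emph{quasi-uniform} submesh of mesh size $\simeq \widehat h_m$ (Lemma~\ref{lem:Ml}), so that the weight functions $h$ and $w$ appearing in those inverse estimates are, on the support of $u_m$, comparable to a fixed power of $\widehat h_m$. First I would write $a(u_m,u_k)$ using the (symmetric, nonlocal) bilinear form and integrate by parts to move the operator onto $u_m$: since $u_m \in \widetilde V_m \subset \widetilde H^s(\Omega)$ is a finite element function on level $m$ and $u_k \in \widetilde H^s(\Omega)$, one has $a(u_m,u_k) = \langle (-\Delta)^s u_m, u_k\rangle_{L^2(\Omega)}$, where the pairing is justified because $(-\Delta)^s u_m \in L^2(\Omega)$ by the inverse estimate (this is exactly why the condition $s<3/4$, resp.\ $s<1/2$, resp.\ the weight $w$, enters, and why $\beta$ is chosen as in the statement).

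Next I would insert the weight artificially and apply Cauchy--Schwarz in $L^2(\Omega)$:
\begin{align*}
a(u_m,u_k) = \int_\Omega (-\Delta)^s u_m \, u_k\,dx
= \int_\Omega \Big( h^{s-\beta} w^{\beta} (-\Delta)^s u_m\Big)\, \Big( h^{-(s-\beta)} w^{-\beta} u_k\Big)\,dx,
\end{align*}
where $h$ is the mesh-width function and $w$ the distance-to-skeleton function of the \emph{level-$k$} mesh $\widetilde\TT_k$ (this is the relevant scale since $u_k$ is the rougher factor). The first factor is bounded by $\|h^{s-\beta} w^{\beta}(-\Delta)^s u_m\|_{L^2(\Omega)}$; here I would replace the level-$k$ weights by level-$m$ weights on $\operatorname{supp} u_m$, paying a factor $(\widehat h_k/\widehat h_m)^{s-\beta}$ (the crucial point: on any element $T'$ of $\widetilde\TT_k$ meeting $\operatorname{supp} u_m$ one has $h_{T'}\lesssim \widehat h_m$ while on the level-$m$ submesh $h\simeq\widehat h_m$, and similarly for $w$ up to shape-regularity constants — this is where Lemma~\ref{lem:Ml} and NVB mesh nestedness are used), and then apply the inverse estimate on level $m$ to get $\lesssim (\widehat h_k/\widehat h_m)^{s-\beta}\|u_m\|_{\widetilde H^s(\Omega)}$. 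For the second factor I would show $\|h^{-(s-\beta)} w^{-\beta} u_k\|_{L^2(\Omega)} \lesssim \|\widehat h_k^{-s} u_k\|_{L^2(\Omega)}$: since $u_k \in \widetilde{\mathcal V}_k$ is supported on the quasi-uniform level-$k$ submesh where $h\simeq \widehat h_k$, this reduces to the scalar estimate $\|w^{-\beta} u_k\|_{L^2} \lesssim \widehat h_k^{-\beta}\|u_k\|_{L^2}$ on each element, which for piecewise constants follows from $\int_T \operatorname{dist}(x,\partial T)^{-2\beta}\,dx \lesssim h_T^{-2\beta}|T|$ (valid since $2\beta<1$ by the choice of $\beta$) and for piecewise linears from a weighted inverse/Hardy-type estimate handling the vanishing of hat functions on $\partial\Omega$ (for interior hat functions $w$ is bounded below by $\simeq h_T$ on a fixed fraction of the patch, while near $\partial\Omega$ one uses $\beta=\max\{s-1/2,0\}<1/2$). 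Collecting the two factors yields $a(u_m,u_k)\le C_{\rm CS}(\widehat h_k/\widehat h_m)^{s-\beta}\|u_m\|_{\widetilde H^s(\Omega)}\|\widehat h_k^{-s} u_k\|_{L^2(\Omega)}$, as claimed.

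The main obstacle I anticipate is the careful bookkeeping of the two different weight functions $h$ and $w$ on the two different mesh levels and the transfer between them: one must verify that on the (coarser) support of $u_m$ the level-$k$ quantities $h$, $w$ are controlled from above by the corresponding level-$m$ quantities up to $\gamma$-shape-regularity constants, using that both $\widetilde\TT_m$ and $\widetilde\TT_k$ descend by NVB from $\TT_0$ and that $\widetilde{\mathcal V}_m$ sits on a quasi-uniform submesh of scale $\widehat h_m$. The only genuinely delicate analytic point is the weighted estimate $\|w^{-\beta} u_k\|_{L^2}\lesssim \widehat h_k^{-\beta}\|u_k\|_{L^2}$ for the piecewise linear case near the boundary, but the restriction $\beta<1/2$ (forced by the choice $\beta=\max\{s-1/2,0\}$ for $s<1$) makes the singular weight integrable after a one-dimensional reduction transverse to $\partial T$, and no new ideas beyond those already used in \cite{faustmann2019quasi} and Lemma~\ref{lem:invest} are needed.
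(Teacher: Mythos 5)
Your overall strategy is the same as the paper's: write $a(u_m,u_k)=\langle(-\Delta)^su_m,u_k\rangle_{L^2(\Omega)}$, insert the weight $h^{s-\beta}w^{\beta}$, apply Cauchy--Schwarz, bound the $(-\Delta)^su_m$ factor by the fractional inverse estimate (Lemma~\ref{lem:invest}, resp.\ \eqref{eq:investS11}--\eqref{eq:investS12}) and the $u_k$ factor by a weighted $L^2$ estimate, with the level comparison $(\widehat h_k/\widehat h_m)^{s-\beta}$ coming from the mesh hierarchy. The paper weights with the \emph{level-$m$} function $\widetilde h_m^{\,s}=h_m^{s-\beta}w_m^{\beta}$ and extracts the gain from the second factor ($\|\widetilde h_m^{-s}u_k\|_{L^2}\lesssim(\widehat h_k/\widehat h_m)^{s-\beta}\|\widehat h_k^{-s}u_k\|_{L^2}$, using $w_k\le w_m$, $h_m\ge\widehat h_m$ and $h_k\simeq\widehat h_k$ on the support of $u_k$ via Lemma~\ref{lem:Ml}); you weight with the level-$k$ functions and try to extract the gain from the first factor. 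That redistribution is legitimate in principle, but the way you justify the decisive step is not correct.

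Concretely: you claim to ``replace the level-$k$ weights by level-$m$ weights on $\operatorname{supp}u_m$, paying a factor $(\widehat h_k/\widehat h_m)^{s-\beta}$,'' citing that $h_k\lesssim\widehat h_m$ on elements of $\widetilde{\TT}_k$ meeting $\operatorname{supp}u_m$ while $h_m\simeq\widehat h_m$ there. Those facts only give $h_k^{s-\beta}w_k^{\beta}\lesssim h_m^{s-\beta}w_m^{\beta}$, i.e.\ a factor $O(1)$, not the decaying factor $\mathcal{E}_{km}$; indeed on $\operatorname{supp}u_m$ the fcc mesh $\widetilde{\TT}_k$ need \emph{not} be refined down to scale $\widehat h_k$ (wherever $\TT_L$ has elements of size $\simeq\widehat h_m$, one has $h_k\simeq\widehat h_m\gg\widehat h_k$ there), so the pointwise comparison $h_k\lesssim(\widehat h_k/\widehat h_m)\,h_m$ fails on that set. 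Moreover, restricting the first factor to $\operatorname{supp}u_m$ is not even permissible, since after Cauchy--Schwarz the integrand is supported on $\operatorname{supp}u_k$, which is not contained in $\operatorname{supp}u_m$. The repair is to localize to $\operatorname{supp}u_k$: there Lemma~\ref{lem:Ml} (applied at level $k$) gives $h_k\simeq\widehat h_k$, while $h_m\ge\widehat h_m$ holds everywhere by the fcc construction and $w_k\le w_m$ by nestedness, so that $h_k^{s-\beta}w_k^{\beta}\lesssim(\widehat h_k/\widehat h_m)^{s-\beta}h_m^{s-\beta}w_m^{\beta}$ on $\operatorname{supp}u_k$, after which the level-$m$ inverse estimate applies to $u_m$; this is exactly the comparison the paper performs (on the $u_k$ factor). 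Your treatment of the second factor, $\|w_k^{-\beta}u_k\|_{L^2(T)}\lesssim h_T^{-\beta}\|u_k\|_{L^2(T)}$ using $2\beta<1$ and an $L^\infty$ inverse estimate, is fine and in fact slightly more elementary than the scaling argument the paper quotes from \cite{faustmann2019quasi}.
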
 
\begin{proof}
	We define  a   modified mesh size function $\widetilde h_m^s$ as 
	$\widetilde h_m^s := h_m^{s-\beta} w_m^\beta$  with the weight function $w_m$ 
	defined such that the inverse estimates of 
	\eqref{eq:investS11}, \eqref{eq:investS12} or Lemma~\ref{lem:invest} (either for the piecewise linears  
	or the piecewise constants) hold. Moreover,
	we note that this choice of $\beta$ fulfills 
	the assumptions of Lemma~\ref{lem:invest} as well as $\beta < s$.
Therefore, the classical Cauchy-Schwarz inequality implies
\begin{align}\label{CS 18}
\nonumber a( u_m,u_k)=&\skp{(-\Delta)^su_m, u_k }_{L^2(\Omega)}=  \skp{\widetilde{h}_m ^s(-\Delta)^su_m, \widetilde{h}_m ^{-s} u_k }_{L^2(\Omega)}\\
& \le \left\|\widetilde h_m^s(-\Delta)^su_{m}\right\| _ {L^2 (\Omega)}
 \left\| \widetilde{h}_m ^{-s} u_k\right\|  _ {L^2(\Omega)}.
\end{align}
A scaling argument as in \cite[Lem.~3.2.]{faustmann2019quasi} yields 
\begin{align*}
\left\| w _k ^{-\beta} u_k\right\|  _ {L^2(T)}
\lesssim h_k^{s-\beta}(T)  \norm{u_k}_ {H^s(T)}+ h_ k ^{-\beta}(T) \norm{u_k}_{L^2 (T)}.
 \end{align*}
Together with $w_k \le w_m$, since $\widetilde{\mathcal{T}}_k$ is a refinement of  $\widetilde{\mathcal{T}}_m$,
and $h_m(T):=h_m|_T \geq \widehat h_m$ this gives
\begin{align*}
\nonumber  \left\| \widetilde{h}_m ^{-s} u_k\right\|  _ {L^2(T)}& \lesssim h_m ^{\beta -s}(T)\left\| w _k ^{-\beta} u_k\right\|  _ {L^2(T)}
\lesssim
 h_m ^{\beta -s}(T) \left( h_k^{s-\beta}(T)  \norm{u_k}   _ {H^s(T)}+ h_ k ^{-\beta}(T) \norm{u_k}_{L^2 (T)}\right) \\
 & \nonumber \lesssim  \widehat{h}_m ^{\beta -s} h_k^{s-\beta}(T)  \norm{ u_k}  _ { H^s(T)}+ \widehat{h}_m ^{\beta -s}h_ k ^{-\beta}(T) \norm{u_k}_{L^2 (T)}\\
  & \nonumber 
 \lesssim 
 \widehat{h}_m ^{\beta -s} h_k^{-\beta}(T)  \norm{u_k} _ {L^2(T)}+ {( \widehat{h}_k / \widehat{h}_m)}^ {s-\beta} \norm{\widehat h _k ^{-s}u_k}_{L^2 (T)}\\ \nonumber
 &\lesssim {( \widehat{h}_k / \widehat{h}_m)}^ {s-\beta}  \norm{\widehat h_k^ {-s} u_k} _ {L^2(T)}.
\end{align*}
Summation over all the elements of $\widetilde{ \mathcal{T}}_m$ gives
 \begin{align}\label{CS 19}
 \left\| \widetilde{h}_m ^{-s} u_k\right\|  _ {L^2(\Omega)} \lesssim 
 {( \widehat{h}_k / \widehat{h}_m)}^ {s-\beta}  \norm{\widehat h_k^ {-s} u_k} _ {L^2(\Omega)}.
 \end{align}
Combining \eqref{CS 18} and \eqref{CS 19} with the inverse estimate
\begin{align*}
 \left\|\widetilde h_m^s(-\Delta)^su_{m}\right\| _ {L^2 (\Omega)} \lesssim \norm{u_m}_{\widetilde{H}^s(\Omega)}
\end{align*}
of \eqref{eq:investS11}, \eqref{eq:investS12} or Lemma~\ref{lem:invest} 
proves the strengthened Cauchy-Schwarz inequality.
\end{proof}

\begin{remark}
 Since  ${( \widehat{h}_k / \widehat{h}_m)}^ {s-\beta} = 2^{-(k-m)(s-\beta)}$ 
for $0 \le m \le  k \le L$,  we get -- following 
 \cite{TosWid05} -- that the symmetric 
 matrix $\mathcal{E}$ with upper triangular part
 	given by $\mathcal{E}_{km} =C_{\rm CS}{( \widehat{h}_k / \widehat{h}_m)}^ {s-\beta}$ satisfies 
 $\rho(\mathcal{E}) < C_{\rm spr}$, with a constant depending only on 
$\Omega$, $d$, $s$, and the 
 initial triangulation $\TT_0$. 
\eremk
\end{remark}

%
%
\begin{lemma} \label{lem:localstab}
(Local stability).
For all $u_\ell \in \widetilde{\mathcal{V}}_\ell$, we have 
\begin{align*}
\norm{u_{\ell}}_{\widetilde{H}^s(\Omega)}^2 \leq
C_{\rm loc}\; \widetilde a_\ell(u_\ell,u_\ell)
\end{align*}
with a constant $C_{\rm loc}>0$ depending only on $\Omega, d, s$, and the initial triangulation $\TT_0$.
\end{lemma}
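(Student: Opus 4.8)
The plan is to reduce the assertion to a single inverse inequality in the fractional norm on a quasi--uniform mesh, and then to prove that inequality separately in the piecewise linear and piecewise constant cases. First I would invoke Lemma~\ref{lem:Ml}: every $z \in \widetilde{\mathcal M}_\ell$ has $h_\ell(z) \simeq \widehat h_\ell$ and $\widetilde{\mathcal V}_\ell \subset \widehat V_\ell$, so $u_\ell$ is a finite element function (piecewise constant if $s \in (0,1/2)$, piecewise linear if $s \in (0,1)$) living on the quasi--uniform mesh $\widehat\TT_\ell$ of mesh width $\widehat h_\ell$. Writing $u_\ell = \sum_{z \in \widetilde{\mathcal M}_\ell} u_\ell(z)\widetilde\varphi_z^\ell$, using that the $L^2$ mass matrix of the nodal basis is spectrally equivalent to its diagonal on $\gamma$--shape--regular meshes, and using $\norm{\widetilde\varphi_z^\ell}_{L^2(\Omega)}^2 \simeq h_\ell(z)^d \simeq \widehat h_\ell^d$, one obtains
\begin{equation*}
\widetilde a_\ell(u_\ell,u_\ell) = \widehat h_\ell^{-2s}\sum_{z \in \widetilde{\mathcal M}_\ell}\abs{u_\ell(z)}^2\norm{\widetilde\varphi_z^\ell}_{L^2(\Omega)}^2 \simeq \widehat h_\ell^{-2s}\norm{u_\ell}_{L^2(\Omega)}^2 .
\end{equation*}
Hence it suffices to prove the inverse estimate $\norm{u_\ell}_{\widetilde{H}^s(\Omega)} \lesssim \widehat h_\ell^{-s}\norm{u_\ell}_{L^2(\Omega)}$, with constant depending only on $\Omega$, $d$, $s$, and $\TT_0$.

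\emph{Piecewise linear case ($s \in (0,1)$).} By Corollary~\ref{cor:interpolation-0} applied to the mesh $\widehat\TT_\ell$ (which combines the interpolation norm equivalence with the standard log--convexity of the real interpolation norm), one has $\norm{v}_{\widetilde{H}^s(\Omega)} \lesssim \norm{v}_{L^2(\Omega)}^{1-s}\norm{v}_{H^1(\Omega)}^{s}$ for all $v \in S^{1,1}_0(\widehat\TT_\ell)$, with constant depending only on the shape regularity. Together with the classical inverse estimate $\norm{v}_{H^1(\Omega)} \lesssim \widehat h_\ell^{-1}\norm{v}_{L^2(\Omega)}$ on quasi--uniform meshes, this yields $\norm{u_\ell}_{\widetilde{H}^s(\Omega)} \lesssim \widehat h_\ell^{-s}\norm{u_\ell}_{L^2(\Omega)}$.

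\emph{Piecewise constant case ($s \in (0,1/2)$).} Interpolation with $H^1_0$ is unavailable, so I would argue directly. For $s < 1/2$ the fractional Hardy inequality on the bounded Lipschitz domain $\Omega$ gives $\norm{\cdot}_{\widetilde{H}^s(\Omega)} \simeq \norm{\cdot}_{H^s(\Omega)}$ on $\widetilde{H}^s(\Omega)$, so it remains to show the classical inverse estimate $\abs{v}_{H^s(\Omega)} \lesssim \widehat h_\ell^{-s}\norm{v}_{L^2(\Omega)}$ for $v \in S^{0,0}(\widehat\TT_\ell)$. This follows by splitting the Aronszajn--Slobodeckij double integral into a near field ($\abs{x-y} \lesssim \widehat h_\ell$) and a far field: the far field is controlled via $\int_{\abs{x-y} \gtrsim \widehat h_\ell}\abs{x-y}^{-d-2s}\,dy \lesssim \widehat h_\ell^{-2s}$, while the near field, by a reference--configuration scaling argument, equals $\widehat h_\ell^{d-2s}$ times a weighted sum of squared element--to--element jumps of $v$, hence is $\lesssim \widehat h_\ell^{-2s}\norm{v}_{L^2(\Omega)}^2$; the reference integral converges precisely because $s < 1/2$. (Alternatively, one may interpolate the $S^{0,0}$--analogue of the endpoint bound of Lemma~\ref{lemma:inverse-estimate}, $\norm{v}_{B^{1/2}_{2,\infty}(\Omega)} \lesssim \widehat h_\ell^{-1/2}\norm{v}_{L^2(\Omega)}$, against the trivial $L^2$ bound.)

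\emph{Conclusion and obstacle.} Combining either case with the reduction gives $\norm{u_\ell}_{\widetilde{H}^s(\Omega)}^2 \lesssim \widehat h_\ell^{-2s}\norm{u_\ell}_{L^2(\Omega)}^2 \simeq \widetilde a_\ell(u_\ell,u_\ell)$, which is the asserted local stability with $C_{\rm loc}$ depending only on $\Omega$, $d$, $s$, and $\TT_0$. I expect the fractional inverse estimate of the middle step to be the only genuine obstacle: in the piecewise linear case it relies on having the interpolation characterization of $\widetilde{H}^s$ with a mesh--independent constant, and in the piecewise constant case on the near/far--field analysis of the $H^s$--seminorm together with the equivalence $\widetilde{H}^s(\Omega) = H^s(\Omega)$ --- both of which only hold for $s < 1/2$ and thereby explain the restriction on $s$ in the piecewise constant discretization.
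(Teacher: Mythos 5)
Your proposal is correct and follows essentially the same route as the paper: use Lemma~\ref{lem:Ml} to place $u_\ell$ on the quasi-uniform mesh of size $\widehat h_\ell$, bound $\widehat h_\ell^{-2s}\|u_\ell\|_{L^2(\Omega)}^2$ by the weighted sum of squared nodal coefficients (the paper uses finite overlap, you use the equivalent mass-matrix diagonal argument), and invoke the inverse estimate $\|u_\ell\|_{\widetilde H^s(\Omega)}\lesssim \widehat h_\ell^{-s}\|u_\ell\|_{L^2(\Omega)}$. The only difference is that the paper simply cites this fractional inverse estimate as standard on quasi-uniform meshes, while you additionally supply valid proofs of it (interpolation plus $H^1$ inverse estimate for $S^{1,1}_0$, and the near/far-field splitting with $\widetilde H^s(\Omega)=H^s(\Omega)$ for $s<1/2$ for $S^{0,0}$), which is a harmless elaboration rather than a different approach.
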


\begin{proof}
Since $u _\ell \in \widetilde{\mathcal{V}}_\ell$, we have 
$
 u_\ell=  \sum_{z \in\widetilde{\mathcal{M}}_\ell} u _ \ell (z) \widetilde\varphi ^{\ell}_z
$. With an inverse estimate - which is allowed since due to 
Lemma~\ref{lem:Ml} $u_\ell$ only lives on a quasi-uniform submesh - 
we can estimate using that the number of overlapping basis functions $\widetilde\varphi ^{\ell}_z$ is bounded by a constant 
depending only on the $\gamma$-shape regularity of the initial triangulation
\begin{align*}
\norm{u_ \ell}_{\widetilde H^s(\Omega)}^2 &\lesssim \norm{\widehat h_\ell^{-s} u_ \ell}_{L^2(\Omega)}^2 =
\widehat h_\ell^{-2s}\norm{\sum_{z \in\widetilde{\mathcal{M}}_\ell} u_\ell(z)\widetilde\varphi^{\ell}_z}_{L^2(\Omega)}^2
\lesssim \widehat h_\ell^{-2s}\sum_{z \in\widetilde{\mathcal{M}}_\ell}\abs{u_\ell(z)}^2 \norm{\widetilde\varphi^{\ell}_z}_{L^2(\Omega)}^2.
\end{align*}
By definition of $\widetilde a_\ell(\cdot,\cdot)$, this 
finishes the proof.
\end{proof}

For $0 \le k \le \ell \le L$, let $\mathcal{E}$ be a symmetric matrix with upper triangular part
		given by $\mathcal{E}_{\ell k} =C_{\rm CS}{( \widehat{h}_\ell / \widehat{h}_k)}^ {s-\beta}$. 
Then, the assumptions of Proposition~\ref{prop:Equiv} follow directly from Lemma~\ref{lem:stabledec} (lower bound)
and Lemma~\ref{lem:strengthenedCS} together with Lemma~\ref{lem:localstab} (upper bound) by writing $u = \sum_{k} u_k$ and 
\begin{align*}
 a(u,u) &= \sum_{k,\ell=1}^L a(u_k,u_\ell) \le 2 \sum_{\ell=1}^L \sum_{k=1}^\ell  a(u_k,u_\ell) 
\stackrel{\text{Lemma~\ref{lem:strengthenedCS}}}{\leq}
 2\sum_{\ell=1}^L \sum_{k=1}^\ell  \mathcal{E}_{\ell k}\sqrt{a(u_k,u_k)\;\widetilde{a}_\ell(u_\ell,u_\ell)} \\
 &\stackrel{\text{Lemma~\ref{lem:localstab}}}{\leq}2 C_{\rm loc}^{1/2} \sum_{\ell=1}^L \sum_{k=1}^\ell  \mathcal{E}_{\ell k}\sqrt{\widetilde{a}_k(u_k,u_k)\;\widetilde{a}_\ell(u_\ell,u_\ell)} 
\leq 2C_{\rm loc}^{1/2} \rho(\mathcal{E}) \sum_{\ell=0}^L \widetilde{a}_\ell(u_\ell,u_\ell),
\end{align*}
and the appearing constants are independent of $L$.

\begin{remark} (Stable decomposition and strengthened Cauchy-Schwarz inequality of mesh hierarchy $(\TT_\ell)_\ell$ 
generated by an adaptive algorithm):
The existence of a stable decomposition and consequently the lower bound in Proposition~\ref{prop:spectralEquiv}
follows essentially verbatim as in \cite[Sec.~4.5]{feischl2017optimal}, where instead of 
Corollary~\ref{cor:lowerbound} an $L^2$-orthogonal projection onto a uniform mesh is used. 

Analyzing the proof of Lemma~\ref{lem:strengthenedCS}, we observe that the choice of mesh hierarchy is 
not crucial for the arguments, one only needs an inverse estimate and a Poincar\'e-type inequality.
Both hold for the case of the decomposition into one dimensional spaces $V_z^\ell$ 
instead of $\widetilde{\mathcal{V}}^\ell$ as well, and, therefore, we directly obtain a strengthened Cauchy-Schwarz inequality 
for $(\TT_\ell)_\ell$ as well. The algebraic arguments of \cite[Sec.~4.6]{feischl2017optimal} then give the upper
bound for Proposition~\ref{prop:spectralEquiv}. \eremk\bigskip
\end{remark}

 \begin{remark}
 In the same way as in \cite{feischl2017optimal}, 
 it is possible to define a \emph{global multilevel diagonal preconditioner} by taking the whole diagonal of the 
 matrix $\mathbf{A}^\ell$ instead of only the diagonal corresponding to the nodes in $\mathcal{M}_\ell$.
 However, compared to the local multilevel diagonal preconditioner, the preconditioner is not optimal in the sense that
 the condition number of the preconditioned system grows (theoretically) by a logarithmic factor of $N_L$.
 We refer to \cite{feischl2017optimal} for numerical observations of the sharpness of this bound for the hyper-singular
 integral operator in the BEM, which essentially corresponds to the case $s=1/2$ here.
\eremk
 \end{remark}
\subsection{Numerical example}
\begin{figure}[ht]
\includegraphics[width=0.4\textwidth]{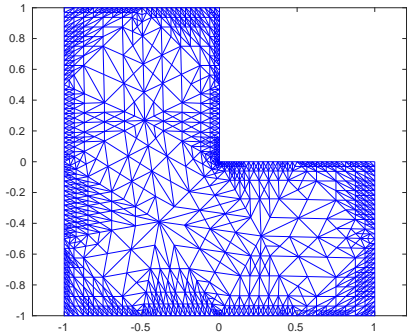}
\centering
\caption{Adaptively generated NVB mesh on the L-shaped domain.}
\label{fig:Lshape}
\end{figure}
We consider the L-shaped domain $\Omega = (-1,1)^2 \backslash [0,1]^2$ as depicted in 
Figure~\ref{fig:Lshape} and discretize \eqref{eq:modelproblem} by piecewise linear functions 
in $S^{1,1}_0(\TT_\ell)$ on adaptively generated NVB meshes $\TT_\ell$ that are 
generated by the adaptive algorithm proposed in \cite{faustmann2019quasi}. 
This adaptive algorithm is steered by local error indicators given by 
\begin{align*}
 \eta_\ell = \left(\sum_{T\in\TT_\ell} 
 \norm{\widetilde h_\ell^{s}\big(f-(-\Delta)^s u_\ell\big)}_{L^2(T)}^2 \right)^{1/2}\!, 
 \quad \text{with} \quad
\widetilde h_\ell^{s} :=
 \begin{cases}
   h_\ell^{s}\quad& \text{for $0 < s \le 1/2$},\\
   h_\ell^{1/2}w_\ell^{s-1/2} & \text{for $1/2 < s < 1$,}
 \end{cases}
\end{align*}
where $u_\ell$ is the solution of \eqref{eq:GalerkinDiscretization}.
We note that by \cite[Theorem~2.3]{faustmann2019quasi} theses indicators are reliable and for $s<1/2$
efficient in some weak sense. Moreover, \cite[Theorem~2.6]{faustmann2019quasi} proves optimal convergence
rates for the adaptive algorithm based on these estimators.
Our implementation of the classical {\tt SOLVE-ESTIMATE-MARK-REFINE} adaptive algorithm uses the 
{\tt MATLAB} code from \cite{ABB17} for the module {\tt SOLVE} and adapted the {\tt MATLAB} code for the 
local multilevel preconditioner from \cite{feischl2017optimal} to our model problem.
Figure~\ref{fig:precLshape} gives the estimated condition numbers for the Galerkin matrix 
$\mathbf{A}^L$ and the preconditioned matrix $\widetilde{\mathbf{P}}_{AS}^{L}$, where the
condition number has been estimated using power iteration and inverse power iteration 
(with random initial vectors) to 
compute approximations to the smallest and largest eigenvalues.

\begin{figure}[ht]
\begin{minipage}{.50\linewidth}
\includegraphics{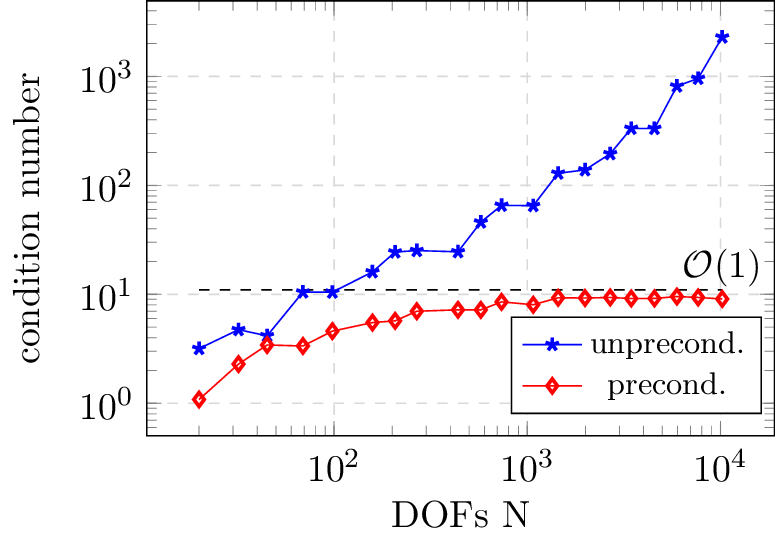}
\end{minipage}
\begin{minipage}{.49\linewidth}
\includegraphics{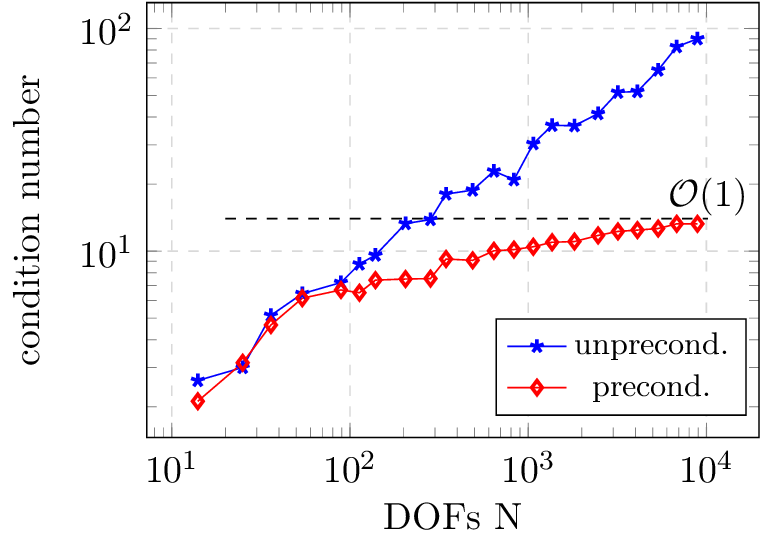}
\end{minipage}
\centering
\caption{Estimated condition numbers for the unpreconditioned matrix $\mathbf{A}^L$ and 
the preconditioned matrix $\mathbf{P}_{AS}^{L}$
left: $s=0.25$, 
right: $s=0.75$.}
\label{fig:precLshape}
\end{figure}

We observe that, as expected,  the condition number of the preconditioned system grows with the problem size, 
whereas the preconditioner leads to uniformly bounded condition numbers for the preconditioned system.

As the preconditioner is structurally similar to the one used in \cite{feischl2017optimal} for the hypersingular 
integral equation, we refer to the numerical results there for the confirmation that the preconditioner can also 
be realized efficiently. 

\bibliographystyle{amsalpha}
\bibliography{scott-zhang}

\end{document}